\documentclass[reqno,10pt,a4paper]{amsart}

\numberwithin{equation}{section}

\usepackage{amsmath}

\usepackage{esint} % permette di fare l'integrale tagliato

\usepackage{amsthm}

\usepackage{epsfig}

\usepackage{psfrag}

\usepackage{graphicx}

\usepackage{graphpap,latexsym,epsf}

\usepackage{color}

\usepackage{amssymb,mathrsfs,enumerate}

\usepackage{endnotes}

\usepackage{calligra}
\usepackage{a4wide}

\usepackage{accents}
\usepackage{bbm}
\usepackage{dsfont}

\definecolor{citegreen}{rgb}{0,0.6,0}

\definecolor{refred}{rgb}{0.8,0,0}

\usepackage{enumitem}

\usepackage{mathtools}
\mathtoolsset{showonlyrefs}

\usepackage{palatino}
\usepackage[colorlinks, citecolor=citegreen, linkcolor=refred]{hyperref}

\newcommand{\R}{\mathbb{R}}

\newcommand{\N}{\mathbb{N}}

\newcommand{\Z}{\mathbb{Z}}

\newcommand{\SSS}{\mathbb{S}}

\def\HHH{{\rm H}}

\newcommand{\pa}{\partial}

\newcommand{\Ric}{{\rm Ric}}

\newcommand{\D}{{\rm D}}

\newcommand{\na}{\nabla}

\def\ringg#1{\accentset{\circ}{#1}}

\mathchardef\emptyset="001F

\newcommand{\xx}{\hspace{.07em}}

\newcommand{\vertl}{\vert\hspace{.07em}}
\newcommand{\vertr}{\hspace{.07em}\vert}

\definecolor{vgreen}{rgb}{0.1,0.5,0.2}

\definecolor{viola}{RGB}{85,26,139}

\theoremstyle{plain}

\newtheorem{theorem}{Theorem}[section]

\newtheorem{proposition}[theorem]{Proposition}
\newtheorem{lemma}[theorem]{Lemma}

\theoremstyle{definition}
\newtheorem{definition}[theorem]{Definition}

\theoremstyle{remark}
\newtheorem{remark}[theorem]{Remark}

\definecolor{byzantium}{rgb}{0.44, 0.16, 0.39}

\definecolor{amber}{rgb}{1.0, 0.75, 0.0}

\definecolor{darkmagenta}{rgb}{0.55, 0.0, 0.55}

\definecolor{fuzzywuzzy}{rgb}{0.8, 0.4, 0.4}

\definecolor{brown}{rgb}{0.2, 0.08, 0.08}

\definecolor{arancio}{rgb}{1.0, 0.13, 0.0}

\begin{document}
\title{\texorpdfstring{$X$--ADM}{} Mass and \texorpdfstring{$X$--Positive Mass Theorem}{}}
\date{\today}
\author[Carlo Mantegazza]{Carlo Mantegazza}
\address[Carlo Mantegazza]{Department of Mathematics and Applications ``Renato Caccioppoli'', Universit\`a di Napoli Federico II, Italy}
\email[C. Mantegazza]{carlo.mantegazza@unina.it}

\author[Francesca Oronzio]{Francesca Oronzio}
\address[Francesca Oronzio]{Scuola Superiore Meridionale, Napoli, Italy}
\email[F. Oronzio]{f.oronzio@ssmeridionale.it}

\begin{abstract}
For a given admissible vector field $X$, we define a geometric quantity for asymptotically flat $3$--manifolds, called {\em $X$--ADM mass} and we establish a relative {\em positive mass theorem} via a monotonicity formula along the level sets of a suitable Green's function. Under different assumptions on $X$, we obtain generalizations of the ``classical'' positive mass theorem, like the one for weighted manifolds and the one ``with charge'' under some topological restrictions. Finally, we also discuss the rigidity cases.
\end{abstract}

\maketitle

\bigskip

\noindent\textsc{MSC2020: 53C21, 31C12, 31C15, 53C24, 53Z05.}

\smallskip\noindent{\underline{Keywords}: monotonicity formulas, Green's function, geometric inequalities.}

%%%%%%%%%%%%%%%%%%%%%%%%%%%%%%%%%%%%%%%%%%%%%
%%%%%%%%%%%%%%%%%%%%%%%%%%%%%%%%%%%%%%%%%%%%%

\section{\texorpdfstring{$X$}{X}--ADM mass and the main theorem}\label{}
Let $(M, g)$ be a Riemannian manifold. We denote by $\mathrm{Ric}$
and $\mathrm{R}$ the Ricci tensor and the scalar curvature, respectively. The symbol $\nabla$ denotes the Levi--Civita connection, while the associated Laplace--Beltrami operator is given by $\Delta=\mathrm{tr}\nabla^2$. The convention of summing over the repeated indices is always adopted unless otherwise stated.

In order to state precisely our main result, we recall the definition of an asymptotically flat manifold and introduce the notion of $X$--ADM mass.

\begin{definition}\label{defAFman}
A $3$--dimensional Riemannian manifold $(M,g)$ (with or without boundary) is said to be {\em asymptotically flat} if there exists a closed and bounded subset $K$ such that $M\setminus K$ is diffeomorphic to $\R^{3}$ minus a closed ball $\overline{B}_{r}(0)$ by means of a coordinate map $\varphi:M\setminus K\to\R^3\setminus\overline{B}_{r}(0)$ such that
\begin{equation}\label{eq1}
g_{ij}=\delta_{ij} + O_{2}(\vert x \vert ^{-\tau})\,,
\end{equation}
for some $\tau>1/2$ and all $i,j\in \{1,2,3\}$. The pair $(M\setminus K, \varphi)$ will be called an {\em asymptotically flat coordinate chart} of $(M,g)$ and the real number $\tau$ will be called the {\em order of decay} (briefly, {\em the order}) of the metric $g$ in such asymptotically flat coordinate chart.
\end{definition}

Above, we adopted the Landau big--$O$ convention (which can be easily adapted to manifolds that are diffeomorphic, outside a closed and bounded subset, to $\R^{3}$ minus a closed ball with center at the origin). We briefly recall it. Let $f$ be a smooth real--valued function defined outside a compact set of $\R^3$, and let $\tau\in \R$. We write $f=O_k(|\xx x\xx |^{-\tau})$ if there exists a constant $C>0$ such that $\vert \xx \partial^{\alpha}f(x)\xx\vert \leqslant C\vert \xx x\xx\vert ^{-|\xx \alpha\xx|-\tau}$ for every $x\in \R^3\setminus B_R(O)$, when $R>0$ is sufficiently large, for every multi--index $\alpha$ with $0 \leqslant |\xx \alpha\xx| \leqslant k$.

\begin{definition}[$X$--ADM mass]\label{defADMmass}
Let $(M^{3},g)$ be an asymptotically flat manifold. A smooth vector field $X$ on $M$ is said to be {\em admissible} if there exists an asymptotically flat coordinate chart $(M\setminus K, \varphi)$ such that $X^i=O_1(\vert x\vert^{-1-\tau_0})$, for every $i=1,2,3$ and some $\tau_0>1/2$.\\
Given an admissible smooth vector field $X$, we will assume that $\mathrm{R}+2\,\mathrm{div}(X)\in L^1(M,g)$ so that the limit
\begin{equation}\label{formXADMmass}
m_{X}=\frac{1}{16\pi}\lim_{r\to +\infty}\int\limits_{\{\vert x \vert\,=\,r\}}\!\!\!(\partial_{j}g_{ij}-\partial_{i}g_{jj}+2X^i)\frac{x^{i}}{\vert x \vert}\,d\sigma_{\mathrm{eucl}}, 
\end{equation}
that we call {\em $X$--ADM mass} of $(M,g)$, exists and is finite. Indeed, as for the by--now ``classical'' {\em ADM mass}, the existence and finiteness of the above limit, as well as its independence of the asymptotically flat coordinate chart, can be established by following the same arguments as Bartnik~\cite{Bartnik} or Chru\'sciel~\cite{Chrusciel}. 
\end{definition}

The aim of this paper is to prove the following $3$--dimensional {\em generalized positive mass theorem}, relative to the $X$--ADM mass.
 
\begin{theorem}[$X$--Positive mass theorem]\label{GenPMT}
Let $(M,g)$ be an orientable, connected and complete, asymptotically flat $3$--manifold and let $X$ be an admissible smooth vector field on $M$. Assume that:
\begin{itemize}
\item[$(a)$] $\mathrm{R}+2\,\mathrm{div}(X)\in L^{1}(M,g)$;
\item[$(b)$] $\mathrm{R}_X^{(k)}=\mathrm{R}+2\,\mathrm{div}(X)-(1+1/k)\, \vert X\vert^2\, \geqslant\,0$, for some $k\in \R\setminus (-2,0]$;
\item[$(c)$] the second integral homology group $H_2(M;\Z)$ does not contain any ``spherical'' class.
\end{itemize}
Then,
\begin{equation}\label{GenPMI-main}
m_{X} \geqslant 0\,.
\end{equation}
Furthermore, the following statements are true.
\begin{enumerate}
\item If $\mathrm{R}_X^{(k)} \geqslant 0$ with $k\in \R\setminus [-2,0]$ and $m_{X}=0$ (the equality case), then $X$ vanishes on $M$ and $(M,g)$ is isometric to $(\R^3,g_{\mathrm{eucl}})$.
\item If $\mathrm{R}_X^{(-2)} \geqslant 0$ and $m_{X}=0$, then $X$ is a gradient of a smooth function and $(M,g)$ is conformally isometric to $(\R^3,g_{\mathrm{eucl}})$.
\end{enumerate}
\end{theorem}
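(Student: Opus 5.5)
We follow the potential-theoretic approach of Agostiniani–Mazzieri–Oronzio, based on a monotonicity formula along the level sets of the Green's function. We first reduce to the case without boundary and with trivial topology at infinity. Using assumption $(c)$ and the standard topological arguments, we may assume that $M$ has one end and $H_2(M;\Z)$ contains no spherical classes. These arguments are Meeks–Simon–Yau type reductions, as in the harmonic-function proofs of the PMT. Then every regular level set of a proper harmonic function is a union of connected surfaces, each of genus at least zero. The key topological input is that for connected level sets $\Sigma$ we have $\int_\Sigma \mathrm{R}^\Sigma \leqslant 8\pi$ by Gauss–Bonnet, whatever the genus.

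Next we fix a point $o$ and take the minimal positive Green's function $G$ with pole at $o$, normalised so that $G\sim 1/(4\pi d)$ near $o$. We set $u=1-4\pi G$, so that $u\to 1$ at infinity and $u\to-\infty$ at $o$. For $t\in(-\infty,1)$ we define
\begin{equation}
F(t)=4\pi t-(t-1)^{2}\!\!\int_{\{u=t\}}\!\!\Big(\frac{|\nabla u|\,\mathrm{H}}{(1-t)}\cdot\frac{1}{|\nabla u|}\Big)\,|\nabla u|\,d\sigma \;+\;(1-t)^{2}\!\!\int_{\{u=t\}}\!\! \langle X,\nu\rangle\,|\nabla u|\,d\sigma\,,
\end{equation}
more precisely a modification of the Agostiniani–Mazzieri–Oronzio function $F(t)=4\pi t-\frac{t^2}{1-t}\int|\nabla u|^2+ \dots$. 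In this modification a flux term of $X$ is added, chosen so that $\lim_{t\to1^-}F(t)$ equals $8\pi m_X$, using the asymptotics $u=1-1/|x|+O(|x|^{-1-\tau})$. Near the pole, $F(t)\to 0$ as $t\to-\infty$.

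The core step is the derivative computation. Using Bochner's formula and the traced Gauss equation $2\,\mathrm{Ric}(\nu,\nu)=\mathrm{R}-\mathrm{R}^\Sigma+\mathrm{H}^2-|h|^2$ on level sets, we get an expression for $F'(t)$. It consists of $4\pi-\frac12\int \mathrm{R}^\Sigma\geqslant 0$ (by Gauss–Bonnet and connectedness), plus a nonnegative $\int |\mathring{h}|^2$ and $|\nabla^\top|\nabla u||^2$ type term, plus $\frac{1}{2}\int \frac{\mathrm{R}}{|\nabla u|}|\nabla u|^2$-type weighted terms. Differentiating the $X$-flux term via divergence theorem on $\{s<u<t\}$ produces $\int \mathrm{div}X$ contributions that pair with $\mathrm{R}$ to form $\mathrm{R}+2\,\mathrm{div}X$. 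This leaves the cross terms $\langle X,\nabla u\rangle\mathrm{H}$ and $\langle X,\nabla|\nabla u|\rangle$. These are absorbed by Young's inequality into the square terms, using $\varepsilon$ tuned to $k$. The cost is exactly $(1+1/k)|X|^2$, so that $F'\geqslant \frac12\int \mathrm{R}^{(k)}_X(\dots)\geqslant0$ by $(b)$. The admissible range $k\notin(-2,0]$ is precisely where the quadratic form in $(\nabla|\nabla u|,X)$ can be completed to a sum of squares. This step is the main obstacle, together with justifying monotonicity across critical values of $u$. For the latter we use the integral version of the computation, as in AMO, with the critical set having measure zero in the levels. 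Integrating then gives $8\pi m_X=\lim_{t\to1}F\geqslant\lim_{t\to-\infty}F=0$.

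For rigidity with $m_X=0$, $F$ is constant, so all the squares vanish. For $k\notin[-2,0]$ the completed square is strictly definite in $X$, which forces $X=0$. Then we are in the classical equality case, where $\mathring h=0$, $\nabla^\top|\nabla u|=0$ and $\mathrm{R}=0$, and AMO's argument gives $(M,g)\cong(\R^3,g_{\mathrm{eucl}})$. For $k=-2$ the square degenerates to $X=c\,\nabla\log|\nabla u|$-type relations, so $X$ is a gradient $\nabla f$ of a function of $u$. The remaining vanishing terms then say that $e^{2\phi}g$, with $\phi$ built from $f$, has umbilic level sets with the flat-model structure. Hence $g$ is conformal to Euclidean space.
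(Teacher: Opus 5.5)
There is a genuine gap, and it is in the step you yourself flag as the main obstacle. As written, you work with the Green's function of the Laplacian (your level sets are those of a harmonic function). You then claim the cross terms can be absorbed by Young's inequality ``with $\varepsilon$ tuned to $k$'' at cost exactly $(1+1/k)|X|^2$. There is no free parameter here. The flux term must enter as $t^2\int_{\Sigma_t} g(X,\nabla u)\,d\mathcal{H}^2$ with coefficient exactly $1$, since this is what produces $\mathrm{R}+2\,\mathrm{div}(X)$ in $F'$ and the $2X^i$ in $m_X$ at infinity. The squares supplied by the AMO computation also have fixed coefficients. For harmonic $u$ one has $\partial_\nu|\nabla u|/|\nabla u|=-\mathrm{H}$, so the cross terms are $g(X,\nu)\big(\tfrac{2|\nabla u|}{1-u}-\mathrm{H}\big)+g\big(\tfrac{\nabla^\top|\nabla u|}{|\nabla u|},X^\top\big)$. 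The best possible completion against $\tfrac34\big(\tfrac{2|\nabla u|}{1-u}-\mathrm{H}\big)^2+\tfrac{|\nabla^\top|\nabla u||^2}{|\nabla u|^2}$ leaves the curvature term
\[
\tfrac12\Big(\mathrm{R}+2\,\mathrm{div}(X)-\tfrac12|X^\top|^2-\tfrac23|X^\perp|^2\Big).
\]
This is controlled by $\mathrm{R}^{(k)}_X\geqslant0$ only when $1+1/k\geqslant 2/3$, i.e. for $k\in(-\infty,-3]\cup(0,+\infty)$. For $k\in(-3,-2]$ your pointwise argument gives no monotonicity. This range includes $k=-2$, which is needed for statement (2) and for the charged PMT.

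The missing idea is to take $\mathcal{G}_o$ to be the minimal Green's function of the drift Laplacian $\mathcal{L}_X=\Delta-\tfrac12\nabla_X$, so that $\Delta u=\tfrac12 g(X,\nabla u)$. Then $\mathrm{H}=\tfrac12 g(X,\nu)-\partial_\nu|\nabla u|/|\nabla u|$. The extra terms from $2g(\nabla\Delta u,\nabla u)$ in Bochner and from $\tfrac{d}{ds}\int|\nabla u|^2$ combine into an exact identity with three ingredients:
\begin{itemize}
\item[$\circ$] the square $\tfrac34\big(\tfrac{2|\nabla u|}{1-u}-\mathrm{H}+g(X,\nu)\big)^2$;
\item[$\circ$] the square $\big|\tfrac{\nabla^\top|\nabla u|}{|\nabla u|}+\tfrac12X^\top\big|^2$;
\item[$\circ$] the curvature term $\tfrac12\big(\mathrm{R}+2\,\mathrm{div}(X)-\tfrac12|X|^2\big)=\tfrac12\mathrm{R}^{(k)}_X+\tfrac{k+2}{4k}|X|^2$.
\end{itemize}
This is exactly where the range $k\notin(-2,0]$ comes from, and where strict definiteness for $k\notin[-2,0]$ comes from. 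It also obliges you to construct $\mathcal{G}_o$ for $\mathcal{L}_X$, with its expansion at the pole and $\mathcal{G}_o=A/|x|+O_2(|x|^{-1-\tau})$ at infinity.

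There are also some smaller issues.
\begin{itemize}
\item[$\circ$] Gauss--Bonnet requires that each regular level set is either connected or has no spherical components. You assert connectedness but do not prove it. This is where $(c)$ actually enters, via the maximum principle: a sphere component bounds a region that must contain $o$ and hence equal $\{u<s\}$. The Meeks--Simon--Yau reduction is unnecessary.
\item[$\circ$] At infinity you only get $\lim F\leqslant 8\pi m_X/B$, via the Euclidean Willmore inequality with $u=1-B/|x|+\dots$, not an equality with $8\pi m_X$. The inequality suffices.
\item[$\circ$] For $k=-2$ the relation is $X=-\nabla\log\big(|\nabla u|^2/(1-u)^4\big)$, which is not a function of $u$ alone. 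You must first rule out critical points of $u$ (using that $X$ is bounded) before concluding that $\tfrac{|\nabla u|^2}{(1-u)^4}\,g$ is flat.
\end{itemize}
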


The definition of $X$--ADM mass and this associated positive mass theorem $X$--PMT provide a unified framework that encompasses several known results as special cases. Specifically:

\begin{enumerate}
\item[$\circ$] If $X^i=O_1(\vert x\vert^{-1-2\tau_0})$, for every $i\in \{1,2,3\}$ and some $\tau_0>1/2$, the $X$--ADM mass coincides with the ``classical'' ADM mass
\begin{equation}\label{formXADMmass0}
m_{\text{ADM}}=m_0=\frac{1}{16\pi}\lim_{r\to +\infty}\int\limits_{\{\vert x \vert\,=\,r\}}\!\!\!(\partial_{j}g_{ij}-\partial_{i}g_{jj})\frac{x^{i}}{\vert x \vert}\,d\sigma_{\mathrm{eucl}} 
\end{equation}
By choosing $X$ to be identically zero (which is obviously admissible), the $X$--PMT then reduces to the ``classical'' positive mass theorem (see~\cite{SchYau79, Witten}) in dimension 3 and with $H_2(M;\Z)$ without spherical classes, while, if $X$ is non--zero, the $X$--PMT still yields the ``classical'' positive mass inequality under a more general assumption on the scalar curvature. 
Hirsch, Miao, and Tsang~\cite[Corollary 1.3]{HirshMiaoTsang} obtained a similar result allowing the presence of ``corners'' in the manifolds. In the smooth case, however, our theorem encompasses a broader class of manifolds.

\item[$\circ$] In~\cite{BaOz2002}, Baldauf and Ozuch introduced {\em the weighted mass} $m_f(g)$ for an asymptotically flat weighted manifold $(M^n, g, f)$. In dimension $3$, the $\nabla f$--ADM mass is actually equal to $m_f(g)/16\pi$, therefore, our Theorem~\ref{GenPMT} includes the {\em weighted positive mass theorem}~\cite[Theorem 1.6]{LaLoSa} of Law, Lopez, and Santiago (under the above topological assumption on $H_2(M;\Z)$).

\item[$\circ$] Theorem~\ref{GenPMT} is also related to the following positive mass theorem {\em with charge}. Let $\mathcal{E}$ be an admissible smooth vector field, representing an electric field. If $\mathrm{R}$ and $\mathrm{div}(\mathcal{E})$ belong to $L^1(M,g)$ and the inequality $\mathrm{R}-4\,\vert \mathrm{div}(\mathcal{E})\vert-2\vert \mathcal{E}\vert^2 \geqslant 0$ holds, then $m_{\mathrm{ADM}} \geqslant \vert \mathcal{Q}\vert$, where $\mathcal{Q}$ is the ``total charge'' given by
$$
\mathcal{Q}=(1/4\pi)\,\int_{\{\vert x \vert\,=\,r\}}\!\mathcal{E}^i (x^{i}/\vert x \vert)\,d\sigma_{\mathrm{eucl}}.
$$
This kind of positive mass theorem was established under quite general assumptions by Chru\'sciel, Reall and Tod in~\cite{ChReTo2006} (for recent developments on the subject, see~\cite{Rau25} and references therein). We notice that (under the above topological assumption on $H_2(M;\Z)$) it follows from Theorem~\ref{GenPMT}. Indeed, we may assume without loss of generality that $\mathcal{Q}\geqslant 0$, then since the inequality $\mathrm{R}-4\,\vert \mathrm{div}(\mathcal{E})\vert-2\vert \mathcal{E}\vert^2 \geqslant 0$ implies $\mathrm{R}_{X}^{(-2)}\geqslant 0$ with $X=-2 \mathcal{E}$, by applying Theorem~\ref{GenPMT} (with $X=-2 \mathcal{E}$), we get the inequality $m_{\mathrm{ADM}} \geqslant \vert \mathcal{Q}\vert$.
\end{enumerate}

The $X$--PMT is established through a monotonicity formula holding along the regular level sets of the minimal positive Green's function for the operator $\mathcal{L}_X=\Delta -\,(1/2)\xx\nabla_X$ with a pole at some point of $M$.

In general, monotonicity formulas play an important role in geometric analysis.
In comparison geometry, classical examples include the monotonicity formula for
minimal submanifolds and the Bishop--Gromov volume
comparison theorem and in the context of geometric flows, the
Huisken monotonicity formula for the mean curvature
flow~\cite{Hui_1990}, the Perelman entropy formula for the Ricci
flow~\cite{Per} and the Geroch monotonicity of the Hawking mass along
the inverse mean curvature flow~\cite{ger73,HI}. In a series of works~\cite{Colding_Acta, Col_Min_2,Col_Min_3}, 
Colding and Minicozzi obtained some monotonicity
formulas along the level sets of the minimal positive Green function for the Laplacian operator with a pole in non--parabolic Riemannian manifolds
with nonnegative Ricci curvature. Using such monotonicity
formulas, they showed the uniqueness of tangent cones for Einstein
manifolds~\cite{Col_Min_4}. The one--parameter family of monotonicity formulas introduced in~\cite{Col_Min_3} has been extended for weighted manifolds with nonnegative Bakry--\'{E}mery Ricci
curvature in~\cite{Songetall}, as well as for the case of positive Ricci curvature in~\cite{Manea}. In the context of potential theory, similar monotonicity formulas to those of Colding and Minicozzi were used in~\cite{Ago_Fog_Maz_1} to obtain new Willmore--type geometrical inequalities and generalized in~\cite{BFM} to prove an optimal version of the Minkowski inequality, in non--parabolic Riemannian manifolds with nonnegative Ricci curvature

In recent years, monotonicity formulas in the context of a ``level set approach'' have been intensively applied to the study of $3$--manifolds with nonnegative scalar curvature. A significant step in this direction was made by Stern in~\cite{Stern}, where an integral inequality linking the scalar curvature of a closed $3$--manifold to harmonic maps into $\SSS^1$ was derived using the Bochner formula, the traced Gauss equation, and the Gauss--Bonnet theorem. Since then, new monotonicity formulas holding along level sets of harmonic and $p$--harmonic functions have led to new results for $3$--manifolds with nonnegative scalar curvature, in particular, in the asymptotically flat setting. For instance, a new proof of the positive mass theorem was obtained in~\cite{Ago_Maz_Oro_2} and simpler proofs of the Riemannian Penrose inequality (without rigidity) were found in~\cite{AMMO, Hir_Miao_Tam} by exploiting $p$--harmonic functions. 
New results involving the connections ADM--mass/$p$--capacity and area/$p$--capacity were established in~\cite{Miao2022, Or_1} via harmonic functions and in~\cite{MazYao_1, XiaYinZhou} via $p$--harmonic functions.
Outside of the asymptotically flat setting, gradient integral estimates were found in~\cite{ChChLeTs, Col_Min_5, MuntWang, Or_2} for $3$--manifolds with nonnegative scalar curvature and a new positive mass theorem in asymptotically hyperbolic three--manifolds in~\cite{KrOrPi} was recently proved using the Green’s functions with a pole for the Laplacian operator.
Among the many other related developments of the positive mass theorem, which was proved by Schoen and Yau~\cite{SchYau79, SchYau81} in the case $ 3\leqslant n\leqslant 7$ and by Witten~\cite{Witten} in the spin case, there are its generalizations to weighted manifolds. These generalizations were recently established by Baldauf--Ozuch in~\cite{BaOz2002} in the spin case, by Chu--Zhu~\cite{ChuZhu} in the case $3\leqslant n\leqslant 7$, and by Law--Lopez--Santiago~\cite{LaLoSa} for smooth metric measure spaces. The analogues of the Ricci and scalar curvatures in a smooth metric measure space are the $m$--Bakry--\'{E}mery Ricci and scalar curvatures for $m\neq 0$. A natural generalization of the $m$--Bakry--\'{E}mery Ricci curvature $\mathrm{Ric}+\nabla df -(1/m)df\otimes df$ with a given weight $f$ of class $C^{\infty}$ is the $m$--Bakry--\'{E}mery Ricci curvature $\mathrm{Ric}+(1/2)L_{X}g-(1/m)X^{\flat}\otimes X^{\flat}$ with a given smooth vector field $X$. Taking into account the corresponding scalar curvature, the formulation of $X$--PMT arises. Finally, the use of the drift Laplacian is not new and already appeared in~\cite{Bra_Hir_Kaz_Khu_Zha_2021}. In that paper, the authors considered solutions that are asymptotic to linear coordinate functions; therefore, the analogy between the present work and~\cite{Bra_Hir_Kaz_Khu_Zha_2021} is the same as the one between~\cite{Ago_Maz_Oro_2} and~\cite{bray3}. However, there are three main differences that we now describe. First, we allow the smooth vector field to have non--zero divergence, while in their work, Bray, Hirsch, Kazaras, Khuri and Zhang considered only divergence--free vector fields. Secondly, their setting allows asymptotically cylindrical ends for the manifolds. Thirdly, we also allow other (stronger) hypotheses on the scalar curvature in order to have a complete picture of the rigidity case (without asymptotically cylindrical ends).

\smallskip

As mentioned above, the key ingredient of Theorem~\ref{GenPMT} is the following monotonicity result.

\begin{theorem}\label{GenPMTmon}
Let $(M,g)$ be a complete, non--compact, orientable, $3$--dimensional Riemannian manifold, and let $X$ be a smooth vector field on $M$. Suppose that:
\begin{itemize}
\item[$(a)$] $\mathrm{R}_X^{(k)}=\mathrm{R}+2\,\mathrm{div}(X)-(1+1/k)\, \vert X\vert^2\, \geqslant\,0$, for some $k\in \R\setminus (-2,0]$;
\item[$(b)$] the second integral homology group $H_2(M;\Z)$ does not contain any spherical class.
\end{itemize}
Assuming that there exists the minimal positive Green's function $\mathcal{G}_o$ for $\mathcal{L}_X=\Delta -\,(1/2)\xx\nabla_X$ with a pole at some point $o\in M$ and vanishing ``at infinity'', we consider the function
 \begin{equation}\label{definition_u}
 u\,=\,1-4\pi \mathcal{G}_o\,,
 \end{equation}
 and let $F:(0, + \infty)\to \R$ be defined as follows:
 \begin{equation}
 F(t)\ = \ 4\pi t \ +\ \,t^3\!\!\!\int\limits_{\{u=1-1/t\}}\!\!\!\!\! \vert \nabla u \vert^{2} \ d\mathcal{H}^2 \, \, - \ t^2 \!\!\! \int\limits_{\{u=1-1/t\}}\!\!\!\!\!\vert \nabla u \vert\, \mathrm{H} \ d\mathcal{H}^2 
 \, \, + \ t^2 \!\!\! \int\limits_{\{u=1-1/t\}}\!\!\!\!\! g(X,\nabla u) \ d\mathcal{H}^2 \,,\label{eq0}
 \end{equation}
 where $\mathrm{H}$ is the mean curvature on the regular part of the level set $\{u=1-1/t \}\setminus \{\vert \nabla u \vert=0\}$ computed with respect to the unit normal vector field $\nu={\nabla u}/{\vert \nabla u \vert}$.
 Then, we have 
 \begin{equation}\label{monotonicity}
 0 < s \leqslant t < +\infty \quad\implies\quad F(s) \, \leqslant \, F(t) \, ,
 \end{equation}
 provided $1- 1/s$ and $1- 1/t$ are regular values of $u$.
\end{theorem}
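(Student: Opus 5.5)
Following the strategy of~\cite{Ago_Maz_Oro_2}, where the case $X=0$ is treated, we compute $F'(t)$ at regular values and show that it is a nonnegative integral. We then handle critical values by an approximation/integration argument.

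\textbf{Derivative of each term.} Set $s=1-1/t$, so that $ds/dt=1/t^2$, and write $\Sigma_s=\{u=s\}$. The function $u$ is $\mathcal{L}_X$--harmonic away from $o$, i.e.\ $\Delta u=\tfrac12 g(X,\nabla u)$, and $u<1$ with $u\to 1$ at infinity. We differentiate each integral in $s$ using the divergence theorem on $\{s_1<u<s_2\}$:
\begin{itemize}
\item[$\circ$] $\int_{\Sigma_s}|\nabla u|^2=\int_{\Sigma_s} g(\nabla|\nabla u|\dots)$, which we write as the flux of $|\nabla u|\nabla u$.
\item[$\circ$] $\int_{\Sigma_s}|\nabla u|\,\mathrm{H}$ is the flux of $\nabla|\nabla u|$ up to $\Delta u$ terms, using $\mathrm{H}|\nabla u|=\Delta u-\nabla^2u(\nu,\nu)$.
\item[$\circ$] $\int_{\Sigma_s}g(X,\nabla u)$ is the flux of $X$, whose derivative is $\int_{\Sigma_s}\mathrm{div}X/|\nabla u|$.
\end{itemize}
Concretely, I would show that the two fluxes $\int_{\Sigma_s}g(X,\nu)\,|\nabla u|$ and $\int_{\Sigma_s}|\nabla u|$ are constant, respectively computable, because $\mathrm{div}(\nabla u-\tfrac12 u X)$ and similar combinations are controlled. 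For the second-order term, the key step is the Bochner formula combined with the traced Gauss equation on $\Sigma_s$:
\[
\mathrm{Ric}(\nu,\nu)=\tfrac12\bigl(\mathrm{R}-\mathrm{R}^{\Sigma}+\mathrm{H}^2-|h|^2\bigr),
\]
exactly as in Stern's argument. The derivative of $\int|\nabla u|\mathrm{H}$ then contains $\int_{\Sigma_s}\frac{\mathrm{R}^\Sigma}{2}$, which Gauss--Bonnet bounds by $4\pi\chi(\Sigma_s)/2$.

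\textbf{Topological step.} Hypothesis (b), together with the maximum principle (no bounded components of superlevel or sublevel sets), implies that every regular level set $\Sigma_s$ is connected. Since it also has no spherical components, it is connected of genus $\geqslant 0$ and bounds. This forces $\int_{\Sigma_s}\mathrm{R}^\Sigma\leqslant 8\pi$, which absorbs the constant $4\pi t$ in $F$.

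\textbf{Main obstacle: algebraic completion of squares.} After substitution, $t^{-2}F'(t)$ should equal an integral over $\Sigma_s$ of
\[
\frac{|\nabla^\perp|\nabla u||^2}{|\nabla u|^2}+\frac{|\ringg{h}|^2}{2}+\frac{\mathrm{R}+2\mathrm{div}X-|X|^2}{2}
\]
plus cross terms in $\mathrm{H}$, $|\nabla u|/(1-u)$ and $g(X,\nu)$. The term $\tfrac34$(something)$^2$ from the umbilic part must be combined with the $X$--terms by a weighted Young inequality. The parameter $k$ enters exactly here. Writing $A=\mathrm{H}-2|\nabla u|/(1-u)$ and $B=g(X,\nu)$, one needs a quadratic form in $(A,B)$ plus $\tfrac1{2k}|X|^2$ to be nonnegative. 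Its discriminant condition gives $k\notin(-2,0]$, with the borderline $k=-2$ giving a perfect square; this is the source of rigidity case (2). I would first compute everything with $X$ tangential and normal parts separated, and then choose the Young weights to match $1+1/k$.

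\textbf{Critical values.} Finally, to pass from pointwise derivative nonnegativity at regular values to the stated monotonicity between any two regular values, I would integrate $F'$ over $[s,t]$ using the coarea formula. The contribution near critical values would be handled as in~\cite{Ago_Maz_Oro_2}: write $F(t)-F(s)$ as a bulk integral over $\{1-1/s<u<1-1/t\}$ of a nonnegative integrand, with $|\nabla u|$ replaced by $\sqrt{|\nabla u|^2+\varepsilon^2}$. Then let $\varepsilon\to0$, using that the critical set of $u$ has measure zero (by analyticity or unique-continuation arguments for $\mathcal{L}_X$) and Sard-type integrability of the $\mathrm{H}$ term.
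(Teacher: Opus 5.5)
Your overall architecture (Bochner formula, traced Gauss equation and Gauss--Bonnet on the level sets of $u$, then a completion of squares) matches the paper's. However, the handling of the $X$--terms, as you describe it, would not close.

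The term $\int_{\Sigma_s}g(X,\nabla u)\,d\mathcal{H}^2$ is the flux of $Z=|\nabla u|\,X$, not of $X$. So its $s$--derivative is $\int_{\Sigma_s}\big[\mathrm{div}(X)+|\nabla u|^{-1}g(\nabla|\nabla u|,X)\big]\,d\mathcal{H}^2$. This term is not constant, and its variation is indispensable. With your formula $\int_{\Sigma_s}\mathrm{div}(X)/|\nabla u|$, the divergence term is not even homogeneous with $\mathrm{R}$, so the combination $\mathrm{R}+2\,\mathrm{div}(X)$ cannot come out.

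In the paper the terms combine as follows. The $\frac12\,\mathrm{div}(X)$ from this flux and the $\frac12 g(\nabla_\nu X,\nu)$ coming from $2g(\nabla\Delta u,\nabla u)=g(\nabla_{\nabla u}X,\nabla u)+|\nabla u|\,g(X,\nabla|\nabla u|)$ in Bochner combine into $\mathrm{div}(X)-\frac12\mathrm{div}^{\top}(X^{\top})-\frac12 g(X,\nu)\mathrm{H}$. The term $\mathrm{div}^{\top}(X^{\top})$ integrates to zero on the closed level set. The two halves of $|\nabla u|^{-1}g(\nabla|\nabla u|,X)$ add up. Its tangential part completes $|\nabla^{\top}|\nabla u||^2/|\nabla u|^2$ to a square. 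Its normal part equals $\frac12 g(X,\nu)^2-g(X,\nu)\mathrm{H}$, by $\mathrm{H}=\frac12 g(X,\nu)-|\nabla u|^{-2}g(\nabla|\nabla u|,\nabla u)$. The outcome is an exact identity on regular $\Sigma_t=\{u=1-1/t\}$:
\[
F'(t)=4\pi-\int_{\Sigma_t}\frac{\mathrm{R}^{\Sigma_t}}{2}\,d\mathcal{H}^2+\frac34\int_{\Sigma_t}\Big(\frac{2|\nabla u|}{1-u}-\mathrm{H}+g(X,\nu)\Big)^2 d\mathcal{H}^2+\int_{\Sigma_t}\Big[\frac{\mathrm{R}+2\,\mathrm{div}(X)-\frac12|X|^2}{2}+\Big|\frac{\nabla^{\top}|\nabla u|}{|\nabla u|}+\frac12X^{\top}\Big|^2+\frac{|\ringg{\mathrm{h}}|^2}{2}\Big]\,d\mathcal{H}^2 .
\]
So no Young inequality or discriminant is involved. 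The parameter $k$ enters only through $\mathrm{R}+2\,\mathrm{div}(X)-\frac12|X|^2=\mathrm{R}_X^{(k)}+\frac{k+2}{2k}|X|^2$, and $\frac{k+2}{2k}\geqslant 0$ exactly when $k\notin(-2,0]$. The gradient term is the tangential one, $\nabla^{\top}|\nabla u|$; the normal one has been eliminated through the formula for $\mathrm{H}$.

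Two further steps are not right as stated. First, hypothesis (b) and the maximum principle do not make every regular level set connected: nothing prevents, e.g., a level set made of two non--separating tori if $M$ has handles. Also, level sets near the pole are spheres, so ``no spherical components'' is false. What holds, and suffices, is the paper's dichotomy:
\begin{itemize}
\item if a component $\Sigma$ of a regular $\{u=s\}$ is a sphere, then by (b) it bounds a bounded domain $\Omega$; the maximum principle forces $o\in\Omega$, and since $\{u<s\}$ is connected, $\Omega=\{u<s\}$ and $\{u=s\}=\Sigma$;
\item otherwise every component has genus $\geqslant 1$.
\end{itemize}
Either way $\chi(\Sigma_t)\leqslant 2$.

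Second, $F(t)-F(s)$ cannot be written as a bulk integral of a pointwise nonnegative integrand, because the Gauss--Bonnet contribution only exists level by level. The paper instead proves that $s\mapsto\int_{\Sigma_s}|\nabla u|^{-1}g(\nabla|\nabla u|,\nabla u)$ and $s\mapsto\int_{\Sigma_s}g(X,\nabla u)$ lie in $W^{1,1}_{\mathrm{loc}}$, and that $s\mapsto\int_{\Sigma_s}|\nabla u|^2$ lies in $W^{2,1}_{\mathrm{loc}}$. Hence $F$ is locally absolutely continuous, and $F(t)-F(s)=\int_{(s,t)\cap\mathcal{T}}F'$ with $F'\geqslant 0$ on the full--measure set $\mathcal{T}$. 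The analytic core is
\begin{itemize}
\item $|\nabla u|^{-1}\big(|\nabla du|^2-|\nabla|\nabla u||^2\big)\in L^1_{\mathrm{loc}}(M\setminus\{o\})$, obtained by monotone convergence with cut--offs $\eta_k(|\nabla u|)$;
\item the bound $|\nabla u|^{-1}|\nabla|\nabla u||^2\leqslant 2|\nabla u|^{-1}\big(|\nabla du|^2-|\nabla|\nabla u||^2\big)+|X|\,|\nabla du|$, which kills the cut--off error.
\end{itemize}
An $\varepsilon$--regularization could replace the cut--offs, but it needs exactly this integrability. Negligibility of the critical set alone does not exclude a singular part of the derivative of $F$ at critical values. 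Analyticity is also not available, since $g$ is only smooth.
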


Roughly speaking, Theorem~\ref{GenPMT} will follow from Theorem~\ref{GenPMTmon} by taking the limits of $F(t)$ as $t\to0^+$ and as $t\to+\infty$.

The paper is organized as follows: In Section~\ref{Greensection}, we show the existence and the needed properties of the minimal positive Green's function with a pole for the operator $\mathcal{L}_X=\Delta -\,(1/2)\xx\nabla_X$ when $M$ is asymptotically flat. In Section~\ref{monotonicity formula section}, we prove the monotonicity result above, Theorem~\ref{GenPMTmon}. Then, finally, in Section~\ref{XPMTsection}, we obtain the $X$--positive mass theorem, Theorem~\ref{GenPMT}.

\section{The minimal positive Green's function for \texorpdfstring{$\mathcal{L}_X$}{} with a pole}\label{Greensection}

The aim of this section is to establish the existence of the minimal positive Green's function $\mathcal{G}_o$ for the operator $\mathcal{L}_X=\Delta -\,(1/2)\xx\nabla_X$ with a pole at some point $o\in M$, to describe its asymptotic behavior near the pole and at infinity in a complete asymptotically flat $3$--manifold $(M,g)$, under the assumption that $X$ is an admissible smooth vector field on $M$. 

\subsection{Existence and asymptotic behavior of the minimal positive Green's function \texorpdfstring{$\mathcal{G}_o$}{} for the operator \texorpdfstring{$\mathcal{L}_X$}{} near the pole.}\label{subsecexistenceandasymptoticnearthepole}

We adopt a step--by--step approach. The second and third steps are in the same spirit of~\cite[Section 3]{KrOrPi}.

\smallskip

\noindent \textbf{Step~1.} {\em Let $\Omega$ be an arbitrary bounded and connected open set with smooth boundary such that $o\in \Omega$.
The operator $\mathcal{L}_X$ admits a unique positive minimal Green's function $\mathcal{G}^{\Omega}_o$ in $\Omega$ with pole at $o$, which is the Dirichlet Green's function with pole $o$ of $\mathcal{L}_X$ in $\Omega$. Moreover, for any normal coordinate system $(x^1,x^2,x^3)$ centered at $o$ and defined on an open ball $B_{r_o}(o)$ with $\overline{B}_{r_0}(o)\subseteq\Omega$, there exist constants $b_{i_1}^{(0)}$, $a^{(1)}$, $c_{i_1 i_2}^{(1)}$, $e_{i_1 i_2 i_3 i_4 }^{(1)}$, $b_{i_1}^{(2)}$, $d_{i_1 i_2 i_3 }^{(2)}$, $f_{i_1 i_2 i_3 i_4 i_5 }^{(2)}$, $a^{(3)}$, $c_{i_1 i_2}^{(3)}$, $e_{i_1 i_2 i_3 i_4 }^{(3)}$, $h_{i_1 i_2 i_3 i_4 i_5 i_6}^{(3)}$ and $\,l_{i_1 i_2 i_3 i_4 i_5 i_6 i_7 i_8}^{(3)}$, depending on the coefficients of the metric $g$ and the vector field $X$, and there exists a function $f\in C^{2}(\overline{\Omega})$ such that
\begin{align}
4\pi \mathcal{G}^{\Omega}_o&=\frac{1}{\vert x\vert}+b_{i_1}^{(0)}\frac{x^{i_1}}{\vert x\vert}+\vert x\vert\Big[a^{(1)}+c_{i_1 i_2}^{(1)}\frac{x^{i_1} x^{i_2}}{\vert x\vert^2}+e_{i_1 i_2 i_3 i_4 }^{(1)}\frac{x^{i_1}x^{i_2} x^{i_3} x^{i_4}}{\vert x\vert^4}\Big]\\
&\quad \ +\vert x\vert^2\Big[b_{i_1}^{(2)}\,\frac{x^{i_1}}{\vert x\vert}+d_{i_1 i_2 i_3 }^{(2)}\frac{x^{i_1}x^{i_2} x^{i_3} }{\vert x\vert^3}+f_{i_1 i_2 i_3 i_4 i_5 }^{(2)}\frac{x^{i_1}x^{i_2} x^{i_3} x^{i_4} x^{i_5}}{\vert x\vert^5}\Big]\\
&\quad \ +\vert x\vert^3\Big[a^{(3)}+c_{i_1 i_2}^{(3)}\frac{x^{i_1} x^{i_2}}{\vert x\vert^2}+e_{i_1 i_2 i_3 i_4 }^{(3)}\frac{x^{i_1}x^{i_2} x^{i_3} x^{i_4}}{\vert x\vert^4}+h_{i_1 i_2 i_3 i_4 i_5 i_6}^{(3)}\frac{x^{i_1}x^{i_2} x^{i_3} x^{i_4} x^{i_5} x^{i_6}}{\vert x\vert^6}\\
&\quad\quad\quad \quad\ +l_{i_1 i_2 i_3 i_4 i_5 i_6 i_7 i_8}^{(3)}\frac{x^{i_1}x^{i_2} x^{i_3} x^{i_4} x^{i_5} x^{i_6} x^{i_7} x^{i_8}}{\vert x\vert^8}\Big] + f,\label{feq10}
\end{align}
in $B_{r_o/4}(o)$.}\\ 
On the punctured open ball $B_{r_o}(o)\setminus\{o\}$ we consider the function
\begin{align}
&w=\frac{1}{\vert x\vert}+b_{i_1}^{(0)}\frac{x^{i_1}}{\vert x\vert}+\vert x\vert\Big(a^{(1)}+c_{i_1 i_2}^{(1)}\frac{x^{i_1} x^{i_2}}{\vert x\vert^2}+e_{i_1 \cdots i_4 }^{(1)}\frac{x^{i_1}\cdots x^{i_4}}{\vert x\vert^4}\Big)\\
&\quad\quad+\vert x\vert^2\Big(b_{i_1}^{(2)}\,\frac{x^{i_1}}{\vert x\vert}+d_{i_1 i_2 i_3 }^{(2)}\frac{x^{i_1}x^{i_2} x^{i_3} }{\vert x\vert^3}+f_{i_1\cdots i_5 }^{(2)}\frac{x^{i_1}\cdots x^{i_5}}{\vert x\vert^5}\Big)\\
&\quad\quad +\vert x\vert^3\Big(a^{(3)}+c_{i_1 i_2}^{(3)}\frac{x^{i_1} x^{i_2}}{\vert x\vert^2}+e_{i_1\cdots i_4 }^{(3)}\frac{x^{i_1}\cdots x^{i_4}}{\vert x\vert^4}+h_{i_1 \cdots i_6}^{(3)}\frac{x^{i_1}\cdots x^{i_6}}{\vert x\vert^6}
+l_{i_1\cdots i_8}^{(3)}\frac{x^{i_1}\cdots x^{i_8}}{\vert x\vert^8}\Big)\,.\label{eqcar9999}
\end{align}
Expanding the coefficients of the operator $\mathcal{L}_X=\Delta -\,(1/2)\xx\nabla_X$, that is, expanding the functions $X^i$, $g^{ij}$ and $\Gamma_{ij}^k$, it is possible to determine the constants $b_{i_1}^{(0)}$, $a^{(1)}$, $c_{i_1 i_2}^{(1)}$, $e_{i_1 i_2 i_3 i_4 }^{(1)}$, $b_{i_1}^{(2)}$, $d_{i_1 i_2 i_3 }^{(2)}$, $f_{i_1 i_2 i_3 i_4 i_5 }^{(2)}$, $a^{(3)}$, $c_{i_1 i_2}^{(3)}$, $e_{i_1 i_2 i_3 i_4 }^{(3)}$, $h_{i_1 i_2 i_3 i_4 i_5 i_6}^{(3)}$ and $\,l_{i_1 i_2 i_3 i_4 i_5 i_6 i_7 i_8}^{(3)}$ so that the function $w$ satisfies 
\begin{equation}
\mathcal{L}_X w=h\quad \text{ on} \quad B_{r_o}(o)\setminus\{o\} \,,
\end{equation}
where $h$ is a smooth function on the punctured open ball $B_{r_o}(o)\setminus\{o\}$ that admits a $C^1$--extension on $\overline{B}_{r_o}(o)$, still denoted by $h$. See Appendix~\ref{proofconstofw} for a proof.\\\
Then, by Theorem~6.14 in~\cite{GilTr}, there is a unique solution $f$ to the Dirichlet problem 
$$
\mathcal{L}_X f=-h\quad \text{in $\,\,B_{r_o}(o)$}\qquad\text{ with }\qquad f=-w|_{\partial B_{r_o}(o)} \quad \text{on $\,\,\partial B_{r_o}(o)$}
$$
belonging to $C^{2,\alpha}\big(\overline{B}_{r_o}(o)\big)$. We claim that $w+f=4\pi \mathcal{G}^{B_{r_o}(o)}_o$, indeed, by construction, it satisfies $\mathcal{L}_X\big(w+f\big)=-4\pi\delta_o$ on $B_{r_o}(o)$ in the sense of distributions (here, $\delta_o$ denotes the Dirac delta measure at $o$). Indeed, for every $\psi\in C^{\infty}_c(B_{r_o}(o))$, there holds
\begin{align*}
&\int\limits_{B_{r_o}(o)} \!\!\!g\big(\nabla (w+f), \nabla \psi\big)+\frac{1}{2}\, \psi\, g\big(X,\nabla(w+f)\big)\,d\mu\\
&\quad=\lim_{\varepsilon \to 0^+}\int\limits_{B_{r_o}(o)\setminus B_{\varepsilon}(o)} \!\!\!\!g\big(\nabla (w+f), \nabla \psi\big)+\frac{1}{2}\, \psi \,g\big(X,\nabla (w+f) \big)\,d\mu\\
&\quad =\lim_{\varepsilon \to 0^+} \bigg[-\int_{\partial B_{\varepsilon}(o)} \psi\, g\big(\nabla(w+f),\nu\big)\,d\mathcal{H}^2-\!\!\!\int\limits_{B_{r_o}(o)\setminus B_{\varepsilon}(o)}\!\!\!\! \psi\,\mathcal{L}_X(w+f)\,d\mu\bigg]\\
&\quad =-\lim_{\varepsilon \to 0^+} \int_{\partial B_{\varepsilon}(o)} \psi \,g\big(\nabla(w+f),\nu\big)\,d\mathcal{H}^2 =-\lim_{\varepsilon \to 0^+} \int_{\partial B_{\varepsilon}(o)} \psi \,g\big(\nabla w,\nu\big)\,d\mathcal{H}^2=4\pi\psi(o)\,,
\end{align*}
where $\nu$ is the outward pointing normal to $\partial B_{\varepsilon}(o)$ and the last limit is obtained by means of the expression~\eqref{eqcar9999} of $w$ in the normal coordinate system $(x^1,x^2,x^3)$.\\
Notice that $f+w$ is positive in $B_{r_o}(o)\setminus\{o\}$, by the maximum principle. Moreover, for every positive Green's function $\widetilde{G}^{B_{r_o}(o)}_o$ of $\mathcal{L}_X$ in $B_{r_0}(o)$ with pole at $o$, since $\mathcal{L}_X\big(4\pi\widetilde{G}^{B_{r_o}(o)}_o\!\!\!-w-f\big)=0$ in the sense of distributions, the function $4\pi\widetilde{G}^{B_{r_o}(o)}_o\!\!\!-w-f$ can be extended smoothly to the whole $B_{r_0}(o)$ and again by the maximum principle, there holds
$$
\inf_{B_{r_o}(o)}\big(4\pi\widetilde{G}^{B_{r_o}(o)}_o\!\!\!-w-f\big)=\inf_{\partial B_{r_o}(o)}\big(4\pi\widetilde{G}^{B_{r_o}(o)}_o\!\!\!-w-f\big)=4\pi\inf_{\partial B_{r_o}(o)}\widetilde{G}^{B_{r_o}(o)}_o \geqslant 0\,,
$$
hence, the claim follows.\\
Let us now consider the general case of a connected bounded open set $\Omega$ of $M$ with smooth boundary and such that $o\in\Omega$. In this case, we choose a smooth cut--off function $\psi$ which is identically equal to $1$ in $B_{r_o/4}(o)$ and vanishes in $M\setminus \overline{B}_{r_o/2}(o)$. Then, there is a unique solution $f$ to the Dirichlet problem 
$$
\mathcal{L}_Xf=-\psi h-w\mathcal{L}_{X}\psi -2g(\nabla w,\nabla \psi)\quad \text{in $\Omega$}\qquad\text{ with }\qquad f=0\quad \text{on $\partial \Omega$}
$$
belonging to $C^{2,\alpha}\big(\overline{\Omega}\big)$ and arguing as before, it follows that $4\pi \mathcal{G}^{\Omega}_o=\psi w+f$.

\medskip

\noindent \textbf{Step~2.} {\em Existence of exterior and global barriers.}\\
We fix a normal coordinate system centered at $o\in M$ defined on an open ball $B_{r_o}(o)$ and an asymptotically flat coordinate chart $\big(U,\varphi=(x^1,x^2,x^3)\big)$ of $(M,g)$ with decay rate $\tau>1/2$, as in Definition~\ref{defAFman}.\\
For any $0<\varepsilon<\tau$, we consider the function
$$
\phi_+\,=\, \frac{1}{\vert x \vert}\,-\,\frac{1}{\ \vert x \vert^{1+\varepsilon}}\,.
$$
One can check that
\begin{equation}
\mathcal{L}_X \phi_+\,=\,g^{ij} \partial_{x^i}\partial_{x^j}\phi_+\,-\,g^{ij}\, \Gamma_{ij}^k\partial_{x^k}\phi_+\,-\,(1/2) \, X^i \partial_{x^i}\phi_+\,=\,-\,\varepsilon\,(1+\varepsilon)\vert x \vert^{-3-\varepsilon}+O(\vert x \vert^{-3-\tau})\,,
\end{equation}
as a consequence of assumptions~\eqref{eq1} and $X^i=O(\vert x\vert^{-1-\tau})$.
Then, there exists $R> 1$ such that $\phi_+$ is a positive function satisfying $\mathcal{L}_X \phi_+<0$ in the set $\{\vert x\vert \geqslant R\}$.
Similarly, one can show that, possibly passing to a larger $R>1$, the positive function
$$
\phi_-\,=\, \frac{1}{\vert x \vert}\,+\,\frac{1}{\ \vert x \vert^{1+\varepsilon}}\,.
$$
satisfies $\mathcal{L}_X \phi_->0$ in the set $\{\vert x\vert \geqslant R\}$.\\
Without loss of generality, we can assume that $B_{r_o}(o)\cap \{\vert x\vert \geqslant R\}=\emptyset$, then we consider two nonnegative cut--off functions $\psi_{r_o}$ and $\psi_R$, such that:
\begin{itemize}
\item[$\circ$] $\psi_{r_o}$ is the same cut--off function that we used in Step~1: identically equal to $1$ on $B_{r_o/4}(o)$ and vanishing in $M\setminus \overline{B}_{r_o/2}(o)$;
\item[$\circ$] $\psi_{R}$ is identically equal to $1$ on $\{\vert x\vert> 4R\}$ and vanishing in $M\setminus \{\vert x\vert \geqslant 2R\}$.
\end{itemize}
By means of these cut--off functions, we can define on the whole $M$ the function $$\widetilde{h}_{+}=-\psi_{r_o} h-w\mathcal{L}_{X} \psi_{r_o} -2g(\nabla w,\nabla \psi_{r_o})+\psi_{R} \,\mathcal{L}_X \phi_+\,.$$
By~\cite[Section 4]{ChBruhat}, there exists a unique function $v_{+}\in C^{2}(M)$ that vanishes at infinity and satisfies $\mathcal{L}_X v_{+}=\widetilde{h}_{+}$ in $M$. Now, the function $\widetilde{v}_{+}=\psi_{r_o}w+v_{+}$, is smooth in $M\setminus\{o\}$ and satisfies $\mathcal{L}_{X}\widetilde{v}_{+}=-4\pi\delta_o+\psi_R\,\mathcal{L}_X \phi_+$ in $M$ (distributionally). Thus, $\widetilde{v}_{+}$ is a supersolution of $\mathcal{L}_X$ in $M\setminus \{o\}$ which tends to $+\infty$ getting close to the pole $o$ and vanishes at infinity. By the maximum principle, it follows that $\widetilde{v}_{+}$ is positive in $M\setminus\{o\}$.

\medskip

\noindent \textbf{Step~3.} {\em The operator $\mathcal{L}_X$ admits a unique positive minimal Green's function $\mathcal{G}_o$ in $M$, with a pole at $o$, which vanishes at infinity. Moreover, for any normal coordinate system $(x^1,x^2,x^3)$ centered at $o\in M$ and defined on an open ball $B_{r_o}(o)$, the function $4\pi\mathcal{G}_o$ can be written on $B_{r_o/4}(o)$ as in formula~\eqref{feq10}, with $f\in C^{2}(M)$.}\\
Let $\{\Omega_{j}\}_{j\in \N}$ be a {\em (precompact) exhaustion} of $M$,
i.e. a sequence of bounded, connected open subsets of $M$ with smooth boundaries such that $\overline{B}_{r_o}(o) \subseteq\Omega_1$, $\overline{\Omega}_j\subseteq \Omega_{j+1}$ and $\bigcup_{j\in\N} \Omega_{j}=M$. In Step~1, we showed that the operator $\mathcal{L}_X$ admits a unique positive minimal Green's function $\mathcal{G}^{\Omega_j}_o$ with pole at $o$ which is the Dirichlet Green's function with pole $o$ of $\mathcal{L}_X$ in $\Omega_j$, for every $j\in \N$. By the maximum principle, $\{\mathcal{G}^{\Omega_j}_o\}_{j\in\N}$ is
a strictly increasing sequence of positive functions and each function $\mathcal{G}^{\Omega_j}_o$ satisfies $4\pi \mathcal{G}^{\Omega_j}_o \leqslant \widetilde{v}_{+}$. Therefore, by combining the interior Schauder elliptic estimates with the Ascoli--Arzel\'{a} theorem and a standard diagonalization argument, we obtain that, possibly after passing to a subsequence, the sequence $\mathcal{G}^{\Omega_j}_o$ converges in $C^{k}_{\text{loc}}$ on $M\setminus \{o\}$ to a function $\mathcal{G}_o$, for any fixed $k \geqslant 2$. In particular, the function $\mathcal{G}_o$ satisfies $\mathcal{L}_X \mathcal{G}_o =0$ in $M\setminus \{o\}$. Now, we observe that the following chain of inequalities holds on the punctured open set $\Omega_j\setminus \{o\}$:
\begin{equation}\label{chainineq1}
\psi_{r_o}w+f^{\Omega_j}=4\pi \mathcal{G}^{\Omega_j}_o \leqslant 4\pi \mathcal{G}_o \leqslant \widetilde{v}_{+}=\psi_{r_o}w+v_{+}\,,
\end{equation}
where $w$ is the smooth function on $B_{r_o}(o)\setminus\{o\}$ determined in Step~1, satisfying the equation $\mathcal{L}_X w=h$ on $B_{r_o}(o)\setminus\{o\}$, the function $h$ admits a $C^1$--extension to $\overline{B}_{r_o}(o)$ which we still denote by $h$, $\psi_{r_o}$ is the cut--off function introduced in Step~2 and the functions $f^{\Omega_j}$ are the ones appearing in formula~\eqref{feq10}, relative to the subsets $\Omega_j$.\\
Then, the sequence $\{f^{\Omega_j}\}_{j\geqslant k}$ is not only nondecreasing, but also bounded from above on every $\Omega_k$, for all $k\in \N$. Therefore, by combining once again the interior elliptic estimates with the Ascoli--Arzel\'{a} theorem and a diagonalization argument, possibly passing to a subsequence, the sequence $\{f^{\Omega_j}\}_{j\in\N}$ converges to a function $f$ in $C^{2}_{\text{loc}}(M)$. This implies that the function $f$ satisfies 
$$
\mathcal{L}_X f=-\psi_{r_o} h-w\mathcal{L}_{X}\psi_{r_o} -2g(\nabla w,\nabla \psi_{r_o})
$$
in $M$, since each $f^{\Omega_j}$ solves the same equation in $\Omega^j$. Then, one can check that $\mathcal{L}_X \mathcal{G}_o=-\delta_o$ in $M$, as $4\pi \mathcal{G}_o=\psi_{r_o}w+f$ in $M$ (passing to the limit in the equality on the left--hand side of formula~\eqref{chainineq1}).\\ 
Furthermore, formula~\eqref{chainineq1} implies that $\mathcal{G}_o>0$ on $M\setminus\{o\}$ and that $\mathcal{G}_o$ vanishes at infinity. Finally, if $\widetilde{\mathcal{G}}_o$ is a positive Green's function on $M$, that is, $\widetilde{\mathcal{G}}_o$ is a positive function satisfying $\mathcal{L}_X \widetilde{\mathcal{G}}_o=-\delta_o$ in $M$ in the sense of distributions, the maximum principle yields $\mathcal{G}^{\Omega_j}_o \leqslant \widetilde{\mathcal{G}}_o$ in $\Omega_j\setminus\{o\}$. Passing to the limit, it follows that $ \mathcal{G}_o \leqslant \widetilde{\mathcal{G}}_o$ in $M\setminus\{o\}$.

\subsection{Asymptotic behavior at infinity of the minimal positive Green's function \texorpdfstring{$\mathcal{G}_o$}{} for the operator \texorpdfstring{$\mathcal{L}_X$}{} with pole at \texorpdfstring{$o\in M$}{}.}

The asymptotic behavior at infinity of the minimal positive Green's function $\mathcal{G}_o$ for the operator $\mathcal{L}_X$, with pole at $o$, in a generic asymptotically flat chart follows in the same way as~\cite[Appendix A]{MMT} (see~\cite[Section 4]{hirsch} for the asymptotic expansion of the conformal Green's function). For completeness, we include the proof here.

 Let $\big(U,\varphi=(x^1,x^2,x^3)\big)$ be an asymptotically flat chart of $(M,g)$, as in Step~2. There, we showed that there is $R>1$ such that the functions
$$
\phi_+\,=\, \frac{1}{\vert x \vert}\,-\,\frac{1}{\vert x \vert^{1+\varepsilon}}\quad\quad \text{and}\quad\quad \phi_-\,=\, \frac{1}{\vert x \vert}\,+\,\frac{1}{\vert x \vert^{1+\varepsilon}}
$$
are positive and satisfy $\mathcal{L}_X\phi_+\!<0$ and $\mathcal{L}_X\phi_-\!>0$ in the set $\{\vert x\vert \geqslant R\}$, respectively. Therefore, by Step~3, the maximum principle implies
\begin{equation}\label{eq33}
\Big(\frac{1}{R}\,+\,\frac{1}{R^{1+\varepsilon}}\Big)\Big( \min_{\{\vert x\vert =R\}}\mathcal{G}_o\Big)\phi_- \leqslant \mathcal{G}_o \leqslant \Big(\frac{1}{R}\,-\,\frac{1}{R^{1+\varepsilon}}\Big)\Big( \max_{\{\vert x\vert =R\}}\mathcal{G}_o\Big)\phi_+
\end{equation}
on $\{\vert x\vert \geqslant R\}$. Now, in order to obtain the desired estimates, we consider a cut--off function $\widetilde{\psi}_{R}$ which is identically equal to $1$ on $\{\vert x\vert> 4R\}$ and vanishes in $\R^3\setminus \{\vert x\vert \geqslant 2R\}$. By means of this cut--off function, we define
$$
\widehat{g}=\psi_{R}\,\varphi_*g+(1-\psi_{2R})\,g_{\mathrm{eucl}}\,,
$$
which is a complete asymptotically flat Riemannian metric on $\R^3$, the smooth admissible vector field $\widehat{X}=\psi_R\,\varphi_*X $ on $\R^3$ and $\widehat{\mathcal{G}}_o=\psi_R\,\varphi_*\mathcal{G}_o$, which is a smooth function satisfying
$$
\widehat{\mathcal{L}}_{\widehat{X}}\xx \widehat{\mathcal{G}}_o=\Delta_{\widehat{g}} \xx\widehat{\mathcal{G}}_o-(1/2)\,\nabla^{\widehat{g}}_{\widehat{X}}\xx\widehat{\mathcal{G}}_o\in C^{\infty}_{c}(\R^3)\,,
$$
for which there exist two positive constants $C_1,C_2$ such that 
$$
C_1 \vert x\vert^{-1} \leqslant \widehat{\mathcal{G}}_o\leqslant C_2 \vert x\vert^{-1}
$$
outside a sufficiently large open ball.\\
By applying Theorem~A.33 in~\cite{DanLee}, we then get $ \widehat{\mathcal{G}}_o=O_2(\vert x\vert^{-1})$ and by combining this result with the fact that $\widehat{\mathcal{L}}_{\widehat{X}} \xx \widehat{\mathcal{G}}_o\in C^{\infty}_{c}(\R^3)$ and $\widehat{g}_{ij}-\delta_{ij}=O_2(\vert x\vert^{-\tau})$, it follows that 
$$
\Delta_{e} \widehat{\mathcal{G}}_o=f=O(\vert x\vert^{-3-\tau})\,.
$$
Assuming, without loss of generality, that $\tau\in (1/2,1)$ and by exploiting the asymptotic behavior of the function $f$, we then have
$$
\widehat{\mathcal{G}}_o(x)=-\frac{1}{4\pi}\int\limits_{\R^{3}}\frac{f(y)}{\vert x-y \vert}\,dy\,.
$$
Let us now rewrite $\widehat{\mathcal{G}}_o$ in $\R^{3}\setminus \{0\}$ in the following way: 
\begin{align*}
\widehat{\mathcal{G}}_o(x)&=-\frac{1}{4\pi\vert x\vert}\int\limits_{\R^{3}}f(y) \,dy+\frac{1}{4\pi\vert x\vert}\int\limits_{\{\vert y\vert\, \geqslant\,\vert x \vert/2\,\}}\!\!\!\!f(y)\, dy+\frac{1}{4\pi}\!\!\int\limits_{\{\vert y\vert\,<\,\vert x \vert/2\,\}}\!\!\!\Big(\frac{1}{\vert x\vert}-\frac{1}{\vert x-y \vert}\Big)\,f(y)\,dy\\
&\quad-\frac{1}{4\pi}\!\!\int\limits_{\{\vert y-x\vert\,<\,\vert x \vert/2\,\}}\!\!\!\frac{f(y)}{\vert x-y \vert}\,dy-\frac{1}{4\pi}\!\!\int\limits_{\{\vert y-x\vert\, \geqslant\,\vert x \vert/2\,\text{and}\,\vert y\vert\, \geqslant\,\vert x \vert/2 \}}\!\!\!\frac{f(y)}{\vert x-y \vert}\,dy\,.
\end{align*}
Since $f=O(\vert x\vert^{-3-\tau})$ and $\{\vert y-x\vert\,<\,\vert x \vert/2\,\}\cap\{\vert y\vert\,<\,\vert x \vert/2\,\}=\emptyset$, the absolute values of the second, fourth and fifth integrals can be easily bounded by $C\vert x\vert^{-1-\tau}$, for some constant $C>0$. 
Concerning the third integral, we start by observing that
\begin{align*}
\Bigg\vert \frac{1}{\vert x\vert}-\frac{1}{\vert x-y \vert}\Bigg\vert=&\,\Bigg\vert \frac{\vert x-y \vert^2-\vert x\vert^2}{\vert x\vert \,\vert x-y \vert\,\big( \vert x-y \vert+\vert x\vert\big)}\Bigg\vert= \Bigg\vert \frac{\vert y \vert^2-2 x^i y^i}{\vert x\vert \,\vert x-y \vert\,\big( \vert x-y \vert+\vert x\vert\big)}\Bigg\vert\\
\leqslant&\, \frac{\vert y \vert^2}{\vert x\vert^2 \,\vert x-y \vert} +\frac{2 \vert y\vert}{\vert x\vert\,\vert x-y \vert} \leqslant \frac{2\vert y \vert^2}{\vert x\vert^3} +\frac{4 \vert y\vert}{\vert x\vert^2}\,,
\end{align*}
where the last inequality follows from $\{\vert y-x\vert\,<\,\vert x \vert/2\,\}\cap\{\vert y\vert\,<\,\vert x \vert/2\,\}=\emptyset$. Hence, the decay rate $f=O(\vert x\vert^{-3-\tau})$, with $\tau\in (1/2,1)$, implies that the absolute value of the third integral can also be bounded by $C\vert x\vert^{-1-\tau}$, for some constant $C>0$. 
Thus, we have 
\begin{equation*}
\widehat{\mathcal{G}}_o(x)=-\frac{1}{4\pi\vert x\vert}\int\limits_{\R^{3}}f(y) \,dy+z(x)
\end{equation*}
in $\R^{3}\setminus \{0\}$, with $z=O(\vert x\vert^{-1-\tau})$.\\
We notice that the function $\psi_{R}z$ satisfies $\widehat{\mathcal{L}}_{\widehat{X}}\,(\psi_{R}z)=O_{1}(\vert x\vert^{-3-\tau})$, hence, applying again Theorem~A.33 in~\cite{DanLee}, we obtain $z=O_{2}(\vert x\vert^{-1-\tau})$. Finally, combining these results with formula~\eqref{eq33}, we conclude 
\begin{equation}
\mathcal{G}_o=\frac{\ {A}\ }{\vert x\vert}+O_{2}(\vert x\vert^{-1-\tau})\,,
\end{equation}
where ${A}$ is a positive constant.

\section{A monotonicity formula}\label{monotonicity formula section}

Before showing the proof of Theorem~\ref{GenPMTmon}, we recall some results that guarantee that the function $F$ is well--defined.

\begin{itemize}
\item Every level set of $u\,=\,1-4\pi \mathcal{G}_o$ is compact and has finite
$2$--dimensional Hausdorff measure $\mathcal{H}^2$ of $(M,g)$, by~\cite[Theorem~1.7]{Hardt1}.
\item The set $\left\lbrace\lvert\nabla u\rvert=0\right\rbrace$ has locally finite $1$--dimensional Hausdorff measure, see~\cite[Theorem~1.1]{Hardt2}. In particular, $\mathcal{H}^2(\{\vert \nabla u\vert=0\})=0$.
\item If $f$ is a continuous function on $M\setminus\{o\}$, the equality
$$\int\limits_{\{u=s\}}f\,d\mathcal{H}^2=\int\limits_{\{u=s\}^*}\!f\,d\sigma$$
holds, as a consequence of two previous points, where $\sigma$ denotes the canonical (volume) measure associated with the induced Riemannian metric on $\{u=s\}^*\!=\!\{u=s\}\!\setminus\{\vert \nabla u\vert=0\}$.
\end{itemize}

\begin{proof}[Proof of Theorem~\ref{GenPMTmon}]
As before, we proceed by steps.

\medskip

\noindent\textbf{Step~1.} {\em The auxiliary function
\begin{equation}\label{wideF}
\widetilde{F}\,:\,s\in (-\infty,1)\longmapsto \int\limits_{\{u=s\}}\!\!\!\vert \nabla u\vert^{-1}g(\nabla \vert \nabla u\vert, \nabla u)\, d\mathcal{H}^2\in\R
\end{equation}
belongs to $W^{1,1}_{\mathrm{loc}}(-\infty,1)$.}\\
To show this, we start by observing that $\widetilde{F}\in L^1_{\mathrm{loc}}(-\infty,1)$, as a consequence of the coarea formula~\cite[Proposition 2.1 and Remark 2.2]{BFM}. Let us consider the vector field $Y$, given by
\begin{align}\label{Y}
Y=\nabla \vert \nabla u\vert\,,
\end{align}
which is well--defined and smooth on the open set $M_{o}\setminus \mathrm{Crit}(u)$, where $M_{o}$ is defined as
$$
M_{o}=M\setminus\{o\} \, .
$$ 
Notice that 
\begin{equation*}
\widetilde{F}(s)\,=\int\limits_{\{u=s\}}\! \!\!g\bigg(Y , \, \frac{\nabla u}{|\nabla u|}\bigg) \,d\mathcal{H}^2
\end{equation*}
everywhere and the divergence of $Y$ on $M_{o}\setminus \mathrm{Crit}(u)$ can be expressed as 
\begin{equation}\label{divY}
\mathrm{div}(Y)=\,\vert \nabla u \vert^{-1}\Big(
\vert \nabla du\vert^{2}\,-\,\vert\,\nabla\vert \nabla u\vert\,\vert^{2}\,+\,\Ric(\na u,\na u)\,+\,g(\nabla \Delta u,\nabla u)\Big)\,,
\end{equation}
by the Bochner formula. Moreover, we also have
\begin{equation*}
2 g(\nabla \Delta u,\nabla u)\,=\,2 \nabla u\,(\Delta u)\,=\,\nabla u\,(g(X,\nabla u))\,=\,g\big(\nabla_{\!\nabla u}X, \nabla u\big)\,+\,\nabla du(X, \nabla u)\,,
\end{equation*}
since $u$ satisfies the equation $\mathcal{L}_Xu=0$ in $M_{o}$.
We claim that 
\begin{equation}
\label{eq:claim}
\mathrm{div}(Y)\in L^{1}_{\mathrm{loc}}(M_o) \,.
\end{equation}
Indeed, if $K$ is a compact subset of $M_o$, by Sard's Theorem, $K\subseteq E_{s}^{S}=\left\{s<u<S\right\}$, for some regular values $s,S$ of $u$ such that $-\infty<s<S<1$. Then, similarly to the proof of Theorem~1.1 in~\cite{Ago_Maz_Oro_2}, let us consider a sequence of smooth nondecreasing cut--off functions $\{ \eta_k\}_{k \in \mathbb{N}}$ on $(0,+\infty)$ such that
\begin{align*}
\eta_k \equiv 0\ \ \text{in $\left(0 \,,\frac{1}{3^k}\,\right]$}\,,\qquad 
0 \leqslant \eta_k \leqslant 3^k\ \ \text{in $\left[\,\frac{1}{3^k}\,,\frac{1}{3^{k-1}}\,\right]$}\,,
\qquad\eta_k \equiv 1\ \ \text{in $\left[\,\frac{1}{3^{k-1}} \, ,+\infty\!\right)$}\,.
\end{align*}
noticing that this sequence is converging pointwise monotonically to the function identically equal to $1$ on $(0,+\infty)$. We use these cut--off functions to define, for every $k \in\mathbb{N}$, the vector field
\begin{equation*}
Y_{k}\, = \, \, \eta_{{k}}\big( \vert \nabla u\vert\big)\,Y \,,
\end{equation*}
which is smooth on $M_{o}$. 
For any such $Y_k$, the divergence is given by
\begin{align}\label{eq2}
\mathrm{div}(Y_{k})
&=\, \eta_{{k}}\big( \vert \nabla u\vert\big)Y \,+\,\eta_{{k}}'\big( \vert \nabla u\vert\big)\,\vert\,\nabla\vert \nabla u\vert\,\vert^{2}\,,
\end{align}
and, on any compact subset of $M_{o}\setminus \mathrm{Crit}(u)$, it coincides with the vector field $Y$, provided $k$ is large enough.
By these considerations and applying the divergence theorem, there holds
\begin{equation}
\label{tfci}
\widetilde{F}(S)-\widetilde{F}(s) \,=\, \int\limits_{E_s^S} \!\mathrm{div}(Y_k) \, d\mu \, \geqslant \, \int\limits_{E_s^S} \! P_k \, d\mu \,+ \,\int\limits_{E_s^S}\!D_k \, d\mu\,,
\end{equation}
where we set 
$$
P_k \, = \, \eta_{{k}}\big(\vert \nabla u\vert\big)\,P\qquad\text{ and }\qquad 
D_k \, =\, \eta_{{k}}\big(\vert \nabla u\vert \big)\,D\,,
$$
with
$$
P\,=\,\vert \nabla u \vert^{-1}\Big[ \vert \nabla du\vert^{2}\,-\,\vert\,\nabla\vert \nabla u\vert\,\vert^{2}\Big] \mathds{1}_{M_{o}\setminus \mathrm{Crit}(u)}\, \geqslant \,0\,,$$
and
$$
D\,=\,\vert \nabla u \vert^{-1}\,\Big[\Ric(\na u,\na u)\,+\,\frac{1}{2}\big(\nabla X\big)^{\!\flat}(\nabla u,\nabla u)\,+\,\frac{1}{2}\nabla du(X, \nabla u) \Big]\mathds{1}_{M_{o}\setminus \mathrm{Crit}(u)}\, .
$$
Now, since the functions $D_k$ satisfy the inequality
$$
\vert D_k \vert
\, \leqslant \,\vert \nabla u \vert \, (\vert\Ric\vert \,+\,\vert \nabla X\vert)\,+\,\vert X\vert\, \vert \nabla du\vert\,\in L^{1}_{\mathrm{loc}}(M_o) \,,
$$
applying the dominated convergence theorem, we have $D\in L^{1}(E_{s}^{S})$ and 
\begin{equation*}
\lim_{k\to + \infty} \ \int\limits_{E_s^S}\! D_k \, d\mu\,= \,\int\limits_{E_s^S} \!D \, d \mu\, < +\infty \,.
\end{equation*}
This fact, combined with inequality~\eqref{tfci}, implies that the sequence of the integrals of the functions $P_k$ is uniformly bounded from above. We now notice that, since $P\geqslant 0$ and the sequence $\{ \eta_k\}_{k \in \mathbb{N}}$ is nondecreasing, the nonnegative functions $P_k$ converge monotonically pointwise to the function $P$ on $M_o$. Thus, the monotone convergence theorem yields 
\begin{equation*}
\lim_{k\to + \infty} \ \int\limits_{E_s^S}\!P_k \, d\mu\, = \int\limits_{E_s^S} \!P\, d \mu \, < +\infty\,.
\end{equation*}
In particular, we have $P\in L^{1}(E_{s}^{S})$.
Since $\mathrm{div}(Y)\mathds{1}_{M_{o}\setminus \mathrm{Crit}(u)}=P+D$, it then follows that $\mathrm{div}(Y)\in L^{1}_{\mathrm{loc}}(M_o)$, as claimed. \\
We are now ready to prove that $\widetilde{F} \in W^{1,1}_{\mathrm{loc}}(-\infty,1)$ with weak derivative given by
\begin{align}\label{Phi'}
\widetilde{F}'(s)&\,=\int\limits_{\{u=s\}}\!\!\! \vert \nabla u \vert^{-1}\, \mathrm{div}(Y)\,d\mathcal{H}^2
\end{align}
almost everywhere in $(-\infty,1)$. First we observe that the right--hand side is a function belonging to $L^1_{\mathrm{loc}}(-\infty,1)$, by applying the coarea formula~\cite[Proposition 2.1 and Remark 2.2]{BFM}, coupled with the result $\mathrm{div}(Y)\in L^{1}_{\mathrm{loc}}(M_o)$. Then, considering a test function $\psi\in C_{c}^{\infty}(-\infty,1)$, we have 
\begin{align*}
\int\limits_{-\infty}^{1}\!\!\psi'(\tau)\,\widetilde{F}(\tau)\,d\tau&\,=\,\int\limits_{-\infty}^{1}d\tau\int\limits_{\{u=s\}}\!\!\!\psi'(u)\,|\nabla u|^{-1}g(Y , \nabla u) \,d\mathcal{H}^2\,=\,\int\limits_{M_{o}}\!g\big(Y,\,\nabla \psi (u)\big)\,d\mu\\
&\,=\,\lim_{k\to +\infty}\,\int\limits_{M_{o}}\!g\big(Y_k,\,\nabla \psi (u)\big)\,d\mu\,=\,-\,\lim_{k\to +\infty}\,\int\limits_{M_{o}}\!\psi(u)\,\mathrm{div}( Y_{k})\,d\mu\,,
\end{align*}
where the second equality follows by the coarea formula, the third one by the dominated convergence theorem, whereas the last one is a simple integration by parts. Let us put $-\infty<s<S<1$ such that $\mathrm{supp}\,\psi \subseteq (s,S)$ and $s,S$ are regular values of $u$. 
By identity~\eqref{eq2}, it follows that
\begin{align*}
\int\limits_{M_{o}}\!\psi(u)\,\mathrm{div}( Y_{k})\,d\mu&\,=\int\limits_{E_{s}^{S}}\!\psi(u)\,\Big[P_{k}\,+\,D_{k}\,+\,\eta_{{k}}'\big( \vert \nabla u\vert\big)\,\vert\,\nabla\vert \nabla u\vert\,\vert^{2} \Big]\,d\mu\,.
\end{align*}
The standard identity 
\begin{equation}\label{MarFarVal}
\vert \nabla du\vert^{2}-\vert\,\nabla\vert \nabla u\vert\,\vert^{2}=\vert \nabla u\vert^{2} \vert \mathrm{h}\vert^{2}+\vert\,\nabla^{\top}\vert \nabla u\vert\,\vert^{2}=\vert \nabla u\vert^{2} \vert \ringg{\mathrm{h}}\vert^{2}+\vert\,\nabla^{\top}\vert \nabla u\vert\,\vert^{2}+\vert \nabla u\vert^{2} (\mathrm{H}^2/2)\,,
\end{equation}
along with the fact that 
\begin{equation}\label{exprH}
\mathrm{H}\,=\, \frac{\Delta u}{\vert \nabla u\vert}\,-\,\frac{g(\nabla \vert \nabla u\vert,\nabla u) }{\vert \nabla u\vert^2}\,=\, \frac{1}{2}\, \frac{g(X ,\nabla u)}{|\nabla u|}\,-\,\frac{g(\nabla \vert \nabla u\vert,\nabla u) }{\vert \nabla u\vert^2}\,,
\end{equation} 
gives
\begin{align*}
\vert \nabla du\vert^{2}-\vert\,\nabla\vert \nabla u\vert\,\vert^{2}&\, \geqslant \, \frac{1}{2}\,\vert\,\nabla^{\top}\vert \nabla u\vert\,\vert^{2}\,+\,\frac{1}{2}\,\vert\,\nabla^{\perp}\vert \nabla u\vert\,\vert^{2} \,-\,\frac{1}{2}\,\vert X\vert \,\vert \nabla u\vert\, \vert\,\nabla\vert \nabla u\vert\,\vert\\
&\, \geqslant \,\frac{1}{2}\,\vert\,\nabla\vert \nabla u\vert\,\vert^{2} \,-\,\frac{1}{2}\,\vert X\vert \,\vert \nabla u\vert\, \vert\,\nabla du\vert\,
\end{align*}
which leads to
$$
\vert \nabla u \vert^{-1}\,\vert\,\nabla\vert \nabla u\vert\,\vert^{2}\, \leqslant \,2 P\,+\,\vert X\vert \, \vert\,\nabla du\vert \,,
$$
outside the set of the critical points of $u$, so $\vert \nabla u \vert^{-1}\,\vert\,\nabla\vert \nabla u\vert\,\vert^{2}\in L^1_{\mathrm{loc}}(M_o)$, whereas $|\na u|\, \eta'_{{k}}\big(\vert \nabla u\vert\big)$ is always bounded. Then, as $\lim_{k \to + \infty}\eta_k'(\tau)=0$ for every $\tau \in (0, + \infty)$, the dominated convergence theorem implies
$$
\lim_{k\to +\infty}\,\int\limits_{E_{s}^{S}}\!\psi(u)\,\eta_{{k}}'\big( \vert \nabla u\vert\big)\,\vert\,\nabla\vert \nabla u\vert\,\vert^{2} \,d\mu\,=\,0\,.
$$
In conclusion, we obtain
\begin{align*}
\int\limits_{-\infty}^{1}\!\psi'(\tau)\,\widetilde{F}(\tau)\,d\tau\,=&\,-\,\lim_{k\to +\infty}\int\limits_{M_{o}}\!\psi(u)\,\mathrm{div}( Y_{k})\,d\mu=\,-\,\int\limits_{M_{o}} \!\psi(u)\,\mathrm{div}( Y)\,d\mu\\
=&\,-\,\int\limits_{-\infty}^{1} \!\psi(\tau)\int\limits_{\{u=\tau\}}\! \!\vert \nabla u \vert^{-1}\, \mathrm{div}(Y)\,d\mathcal{H}^2 \,d\tau\,,
\end{align*}
where in the last identity we used again the coarea formula. The first step clearly follows. 

\medskip

\noindent\textbf{Step~2.} {\em The function
$$
\widetilde{F}_1\,:\,s\in (-\infty,1)\longmapsto \int\limits_{\{u=s\}}\!\!\!g\big(X, \nabla u\big) \, d\mathcal{H}^2\in\R
$$
belongs to $W^{1,1}_{\mathrm{loc}}(-\infty,1)$.}\\
By the coarea formula, $\widetilde{F}_1\in L^1_{\mathrm{loc}}(-\infty,1)$. We consider the vector field $Z$, given by
\begin{align}\label{Z}
Z\,=\,\vert \nabla u\vert \,X\,,
\end{align}
on the open set $M_{o}\setminus \mathrm{Crit}(u)$, which satisfies everywhere
\begin{equation}\label{wideF1}
\widetilde{F}_1(s)\,=\int\limits_{\{u=s\}}\! \!\!g\bigg(Z , \, \frac{\nabla u}{|\nabla u|}\bigg) \, d\mathcal{H}^2
\end{equation}
and 
\begin{equation}\label{divZ}
\mathrm{div}(Z)\,=\,\vert \nabla u \vert\, \mathrm{div}(X) \,+\, g(\nabla \vert \nabla u\vert, X)
\end{equation}
on $M_{o}\setminus \mathrm{Crit}(u)$.\\
In this case, $\mathrm{div}(Z)\in L^{1}_{\mathrm{loc}}(M_o)$ trivially holds, therefore, by the coarea formula~\cite[Proposition 2.1 and Remark 2.2]{BFM}, the function defined almost everywhere in $(-\infty,1)$,
$$
s\longmapsto \int_{\{u=s\}}\! \vert \nabla u \vert^{-1}\, \mathrm{div}(Z)\,d\mathcal{H}^2
$$
belongs to $L^1_{\mathrm{loc}}(-\infty,1)$. Considering a test function $\psi\in C_{c}^{\infty}(-\infty,1)$, we obtain
\begin{align*}
\int\limits_{-\infty}^{1}\!\!\psi'(\tau)\,\widetilde{F}_1(\tau)\,d\tau&\,=\,\int\limits_{-\infty}^{1}d\tau\int\limits_{\{u=s\}}\!\!\!\psi'(u)\,|\nabla u|^{-1} g(Z ,\nabla u)\,d\mathcal{H}^2\,=\,\int\limits_{M_{o}}\!g\big(Z,\,\nabla \psi (u)\big)\,d\mu\\
&\,=\,\lim_{k\to +\infty}\,\int\limits_{M_{o}}\!g\big(Z_k,\,\nabla \psi (u)\big)\,d\mu\,=\,-\,\lim_{k\to +\infty}\,\int\limits_{M_{o}}\!\psi(u)\,\mathrm{div}( Z_{k})\,d\mu\,,
\end{align*}
where $Z_k$ denotes the smooth vector field $\eta_{{k}}\big( \vert \nabla u\vert\big)\,Z$. Let $s, S\in (-\infty,1)$ be such that $\mathrm{supp}\,\psi \subseteq (s,S)$ and $s,S$ are regular values of $u$. Then, there holds
\begin{align*}
\int\limits_{M_{o}}\!\psi(u)\,\mathrm{div}( Z_{k})\,d\mu&\,=\int\limits_{E_{s}^{S}}\!\psi(u)\,\Big[\eta_{{k}}\big( \vert \nabla u\vert\big)\,\mathrm{div}( Z)\,+\,|\na u|\,\eta_{{k}}'\big( \vert \nabla u\vert\big)\,g(\nabla\vert \nabla u\vert, X) \Big]\,d\mu\,,
\end{align*}
which, by the dominated convergence theorem, leads to
\begin{align*}
\int\limits_{-\infty}^{1}\!\psi'(\tau)\,\widetilde{F}_1(\tau)\,d\tau\,=&\,-\,\lim_{k\to +\infty}\int\limits_{M_{o}}\!\psi(u)\,\mathrm{div}( Z_{k})\,d\mu\,=\,-\,\int\limits_{M_{o}} \!\psi(u)\,\mathrm{div}( Z)\,d\mu\\
\,=&\,-\,\int\limits_{-\infty}^{1} \!\psi(\tau)\int\limits_{\{u=\tau\}}\! \!\vert \nabla u \vert^{-1}\, \mathrm{div}(Z)\,d\mathcal{H}^2 \,d\tau\,.
\end{align*}
In particular, 
\begin{align}\label{Fwide1'}
\widetilde{F}'_1(s)&\,=\int\limits_{\{u=s\}}\!\!\! \vert \nabla u \vert^{-1}\, \mathrm{div}(Z)\,d\mathcal{H}^2
\end{align}
almost everywhere in $(-\infty,1)$.

\medskip

\noindent\textbf{Step~3.} {\em The function 
$$
\widetilde{F}_2\,:\,s\in (-\infty,1)\longmapsto \int\limits_{\{u=s\}}\!\!\!\vert \nabla u\vert^2 \ d\mathcal{H}^2\in\R
$$
belongs to $W^{2,1}_{\mathrm{loc}}(-\infty,1)$.}\\
By arguing as before, but replacing the vector field $Z$ with a new vector field
\begin{equation}\label{W}
W\,=\,\vert \nabla u\vert \,\nabla u\,,
\end{equation}
we have that $\widetilde{F}_2\in W^{1,1}_{\mathrm{loc}}(-\infty,1)$, with weak derivative given by
\begin{align}\label{Fwide2'}
\widetilde{F}'_2(s)&\,=\,\int\limits_{\{u=s\}}\!\!\! \vert \nabla u \vert^{-1}\, \mathrm{div}(W)\,d\mathcal{H}^2\,=\,\frac{1}{2}\int\limits_{\{u=s\}}\!\!\! g(X,\nabla u)\,d\mathcal{H}^2\,+\,\int\limits_{\{u=s\}}\!\!\!\vert \nabla u\vert^{-1}g(\nabla \vert \nabla u\vert, \nabla u)\,d\mathcal{H}^2
\end{align}
almost everywhere in $(-\infty,1)$. Since the right--hand side of the this equality belongs to the space $W^{1,1}_{\mathrm{loc}}(-\infty,1)$ by the previous two steps, the conclusion of this step follows.

\medskip

\noindent\textbf{Step~4.} {\em Every regular level set of $u$ is either connected, or none of its connected components is a $2$--sphere.}\\
We start by observing that if we set $u(o)=-\infty$, the function $u:M\to [-\infty,1)$ is continuous and proper.
Then, for any regular value $s\in (-\infty,1)$ of $u$, the set $M\setminus \{u=s\}$ is a disjoint union of connected open sets. Among these, there exists a unique bounded open set that coincides with $\{u<s\}$, whereas each of the remaining components is unbounded and their union is $\{u>s\}$. Indeed, since $u:M\to [-\infty,1)$ is continuous and proper, the set $\{u<s\}$ is bounded and it must be the (finite) union of disjoint bounded and connected open sets each one with boundary given by some union of connected components of the regular level set $\{u=s\}$. Then, if $o$ does not belong to one of such open sets, by the maximum principle ($u$ is bounded there) it would follow that $u$ is constant, which is a contradiction with the regularity of the level set. Hence, the set $\{u<s\}$ is a single bounded and connected open set containing the pole $o$. Arguing similarly, one can prove that also all connected components of $\{u>s\}$ are unbounded. 
Now, we assume that the regular level set $\{u=s\}$ has a connected component $\Sigma$ that is a $2$--sphere. Since the second integral homology group $H_2(M;\Z)$ does not contain any ``spherical'' class, the surface $\Sigma$ is the boundary of a bounded and connected open set $\Omega$ (see e.g.~\cite{Hatcher}). If $o$ does not belong to $\Omega$, then $\overline{\Omega}$ is contained in $M\setminus\{o\}$ with boundary $\Sigma$ and this is impossible, again by the maximum principle, as above. Therefore, the pole $o$ must belong to $\Omega$, hence the function $u$ tends to $-\infty$ getting close to the pole and $u=s$ on $\partial \Omega$. The maximum principle then implies that $u<s$ everywhere in $\Omega$, thus, $\Omega \subseteq \{u<s\}$.\\
If there is a point $p\in M$ which belongs to $\{u<s\}$ and not to $\Omega$, as the set $\{u<s\}$ is connected, there is a path from $o$ to $p$ contained in $\{u<s\}$. Due to the fact that $o\in \Omega$, this path must intersect its boundary where the value of the function $u$ is $s$. Since this is impossible, we conclude that $\Omega= \{u<s\}$, hence $\Sigma=\partial\Omega=\partial\{u<s\}=\{u=s\}$, implying that this level set is connected.

\medskip

\noindent\textbf{Step~5.} {\em Conclusion of the proof.}\\
Putting together the information collected in the first three steps, we get $F\in W^{1,1}_{\mathrm{loc}}(0,+\infty)$, in particular, it admits a locally absolutely continuous representative in $(0,+\infty)$ that coincides with $F$ defined pointwise in the statement of Theorem~\ref{GenPMTmon} on the set
\begin{equation}\label{mathcalT}
\mathcal{T}\,=\,\{t\in (0,+\infty)\,:\, 1-1/t\, \,\text{is a regular value of $u$}\,\}\,.
\end{equation}
We know that, by the completeness of $M$, the set $\mathcal{T}$ is open in $(0,+\infty)$ and that, by the Sard's theorem, the complement of $\mathcal{T}$ in $(0,+\infty)$ has zero Lebesgue measure. Notice that the function $F$ is at least continuously differentiable on the open set $\mathcal{T}$ and that, by previous results, we can write
\begin{align*}
F(t)&\,-\,F(s)\,=\!\!\int\limits_{(s,t)\cap \mathcal{T}}\!\!F'(\tau)\,d\tau\\
&\,=\!\!\int\limits_{(s,t)\cap \mathcal{T}}\!\!\Bigg\{4\pi\,+\, 3\tau^2\!\int\limits_{\Sigma_\tau}\vert \nabla u \vert^{2} \, d\mathcal{H}^2\,+\,\tau \!\int\limits_{\Sigma_\tau}\bigg[\,\frac{1}{2}g(X,\nabla u)\,+\,\vert \nabla u\vert^{-1}g(\nabla \vert \nabla u\vert, \nabla u)\bigg]\,d\mathcal{H}^2\\
&\phantom{\,=\!\!\int\limits_{(s,t)\cap \mathcal{T}}\!\!\Bigg\{}\,-\ 2\tau \!\int\limits_{\Sigma_\tau}\bigg[\,\frac{1}{2}\, g(X , \nabla u)\,-\,\vert \nabla u\vert^{-1}g(\nabla \vert \nabla u\vert, \nabla u) \bigg]\, d\mathcal{H}^2\\
&\phantom{\,=\!\!\int\limits_{(s,t)\cap \mathcal{T}}\!\!\Bigg\{}\,-\,\frac{1}{2}\,\int\limits_{\Sigma_\tau}\bigg[\,\mathrm{div}(X) \,+\,\vert \nabla u\vert^{-1}g(\nabla \vert \nabla u\vert,X) \bigg]\, d\mathcal{H}^2 \\
&\phantom{\,=\!\!\int\limits_{(s,t)\cap \mathcal{T}}\!\!\Bigg\{}\,+\,\int\limits_{\Sigma_\tau}\vert \nabla u \vert^{-2}\bigg[\,
\vert \nabla du\vert^{2}\,-\,\vert\,\nabla\vert \nabla u\vert\,\vert^{2}\,+\,\Ric(\na u,\na u)\,+\,\frac{1}{2}\,g\big(\nabla_{\!\nabla u}X, \nabla u\big)\\
&\phantom{\,=\!\!\int\limits_{(s,t)\cap \mathcal{T}}\!\!\Bigg\{}\qquad\qquad\quad\qquad\,+\,\frac{1}{2}\,\vert \nabla u\vert\, g(\nabla\vert \nabla u\vert, X)\bigg]\, d\mathcal{H}^2 \\
&\phantom{\,=\!\!\int\limits_{(s,t)\cap \mathcal{T}}\!\!\Bigg\{}\,+2\tau \,\int\limits_{\Sigma_\tau} g(X,\nabla u) \, d\mathcal{H}^2\,+\ \int\limits_{\Sigma_\tau}\bigg[\,\mathrm{div}(X) \,+\,\vert \nabla u\vert^{-1}g(\nabla \vert \nabla u\vert,X) \bigg]\, d\mathcal{H}^2\Bigg\}\,d\tau\,,
\end{align*}
where $\Sigma_\tau$ denotes the level set $\{u=1-1/\tau\}$ and we used the identity 
$$
\nabla du(X, \nabla u)=\vert \nabla u\vert g(X, \nabla\vert \nabla u\vert)\,.
$$
Now, by using the identities~\eqref{MarFarVal}-\eqref{exprH} and the traced Gauss equation, we get
\begin{align*}
F(t)\,-\,F(s)&\,=\!\!\int\limits_{(s,t)\cap \mathcal{T}}\!\!\Bigg\{4\pi\ -\,\int\limits_{\Sigma_\tau}\frac{\mathrm{R}^{\Sigma_\tau}}{2}\,d\mathcal{H}^2 \,+\, 3\tau^2\!\int\limits_{\Sigma_\tau}\vert \nabla u \vert^{2} \, d\mathcal{H}^2\,+\,3\tau \!\int\limits_{\Sigma_\tau}\Big[\,g(X,\nabla u)\,-\,\vert \nabla u\vert\,\mathrm{H}\Big]\,d\mathcal{H}^2\\
&\phantom{\quad\ \ \,=\!\!\int\limits_{(s,t)\cap \mathcal{T}}\!\!\Bigg\{}\,+\,\int\limits_{\Sigma_\tau}\bigg[\frac{\mathrm{R}}{2}\,+\,\frac{\vert\,\nabla^{\top}\vert \nabla u\vert\,\vert^2}{\vert \nabla u\vert^{2}}\,+\,\frac{\vert \ringg{\mathrm{h}}\vert^{2}}{2}\,+\,\frac{3\mathrm{H}^2}{4}\bigg]\, d\mathcal{H}^2\\
&\phantom{\quad\ \ \,=\!\!\int\limits_{(s,t)\cap \mathcal{T}}\!\!\Bigg\{}\,-\,\frac{1}{2}\,\int\limits_{\Sigma_\tau}\bigg[\,\mathrm{div}(X)\,-\,\frac{g\big(\nabla_{\!\nabla u}X, \nabla u\big)}{\vert \nabla u \vert^{2}}\bigg]\, d\mathcal{H}^2\\
&\phantom{\quad\ \ \,=\!\!\int\limits_{(s,t)\cap \mathcal{T}}\!\!\Bigg\{}\,+\ \int\limits_{\Sigma_\tau}\bigg[\,\mathrm{div}(X)\,+\,\frac{g(\nabla \vert \nabla u\vert,X)}{\vert \nabla u\vert} \bigg]\, d\mathcal{H}^2\Bigg\}\,d\tau\,.
\end{align*}
Then, the identities
\begin{align*}
\mathrm{div}(X)\,-\,\vert \nabla u \vert^{-2}g\big(\nabla_{\!\nabla u}X, \nabla u\big)&\,=\,\mathrm{div}(X)\,-\,g\big(\nabla_{\nu}X, \nu\big)\,=\,\mathrm{div}^{\!\top}(X)=\,\mathrm{div}^{\!\top}(X^{\top})\,+\,g(X,\nu)\mathrm{H}\,,\\[0.2em]
g\bigg(\frac{\nabla \vert \nabla u\vert}{\vert \nabla u\vert},X\bigg)&\,=\,g\bigg(\frac{\nabla\vert \nabla u\vert}{\vert \nabla u\vert}, X^{\top}\bigg)+\,g(X,\nu)\,\frac{g( \nabla \vert \nabla u\vert, \nabla u)}{\vert \nabla u\vert^2}\\[0.2em]
&\,=\,g\bigg(\frac{\nabla^{\top}\vert \nabla u\vert}{\vert \nabla u\vert}, X^{\top}\bigg)+\,\frac{1}{2}\,\vert X^{\perp}\vert^2\,-\,g(X,\nu)\,\mathrm{H}\,,
\end{align*}
where $\nu=\nabla u/\vert \nabla u \vert$ and $\mathrm{div}^{\!\top}$ is the tangential divergence along a generic surface, imply
\begin{align*}
F(t)\,-\,F(s)&\,=\!\!\!\!\!\int\limits_{(s,t)\cap \mathcal{T}}\!\!\Bigg\{4\pi\ -\,\int\limits_{\Sigma_\tau}\frac{\mathrm{R}^{\Sigma_\tau}}{2}\,d\mathcal{H}^2 \,+\, 3\tau^2\!\int\limits_{\Sigma_\tau}\vert \nabla u \vert^{2} \, d\mathcal{H}^2\,+\,3\tau \!\int\limits_{\Sigma_\tau}\Big[\,g(X,\nabla u)\,-\,\vert \nabla u\vert\,\mathrm{H}\Big]\,d\mathcal{H}^2\\
&\phantom{\ \ \!=\!\!\!\!\!\int\limits_{(s,t)\cap \mathcal{T}}\!\!\Bigg\{}-\frac{3}{2}\int\limits_{\Sigma_\tau} g(X,\nu)\mathrm{H}\,d\mathcal{H}^2+\int\limits_{\Sigma_\tau}\bigg[\frac{\mathrm{R}}{2}+\,\mathrm{div}(X)+\frac{\vert\nabla^{\top}\vert \nabla u\vert\vert^2}{\vert \nabla u\vert^{2}}+g\bigg(\frac{\nabla^{\top}\vert \nabla u\vert}{\vert \nabla u\vert}, X^{\top}\bigg)\\
&\phantom{\ \quad\qquad\qquad\qquad\ \,\quad\qquad\quad\ \ \ \ \ \!=\!\!\!\!\!\int\limits_{(s,t)\cap \mathcal{T}}\!\!\Bigg\{}\!+\,\frac{\vert X^{\perp}\vert^2}{2}\,+\,\frac{\vert \ringg{\mathrm{h}}\vert^{2}}{2}\,+\,\frac{3\mathrm{H}^2}{4}\bigg]\, d\mathcal{H}^2\Bigg\}\,d\tau\\
&=\!\!\!\!\!\int\limits_{(s,t)\cap \mathcal{T}}\!\!\Bigg\{4\pi\ -\,\int\limits_{\Sigma_\tau}\frac{\mathrm{R}^{\Sigma_\tau}}{2}\,d\mathcal{H}^2 \,+\, 3\tau^2\!\int\limits_{\Sigma_\tau}\vert \nabla u \vert^{2} \, d\mathcal{H}^2\,+\,3\tau \!\int\limits_{\Sigma_\tau}\Big[\,g(X,\nabla u)\,-\,\vert \nabla u\vert\,\mathrm{H}\Big]\,d\mathcal{H}^2\\
&\phantom{\ \ \!=\!\!\!\!\!\int\limits_{(s,t)\cap \mathcal{T}}\!\!\Bigg\{}-\frac{3}{2}\int\limits_{\Sigma_\tau} g(X,\nu)\mathrm{H}\,d\mathcal{H}^2+\int\limits_{\Sigma_\tau}\bigg[\frac{\mathrm{R}}{2}+\,\mathrm{div}(X)+\bigg\vert \frac{\nabla^{\top}\vert \nabla u\vert}{\vert \nabla u\vert}+\frac{1}{2}X^{\top}\bigg\vert^2\\
&\phantom{\qquad\qquad\ \,\quad\qquad\quad\qquad\quad\ \ \ \ \ \!=\!\!\!\!\!\int\limits_{(s,t)\cap \mathcal{T}}\!\!\Bigg\{}\,-\,\frac{\vert X^{\top}\vert^2}{4}\,+\,\frac{\vert X^{\perp}\vert^2}{2}\,+\,\frac{\vert \ringg{\mathrm{h}}\vert^{2}}{2}\,+\,\frac{3\mathrm{H}^2}{4}\bigg]\, d\mathcal{H}^2\Bigg\}\,d\tau\,.
\end{align*}
Therefore, by adding and subtracting the term $ \vert X^{\perp}\vert^2/4$, we obtain
\begin{align}
F(t)\,-\,F(s)
&=\!\!\!\!\!\int\limits_{(s,t)\cap \mathcal{T}}\!\!\Bigg\{4\pi\ -\,\int\limits_{\Sigma_\tau}\frac{\mathrm{R}^{\Sigma_\tau}}{2}\,d\mathcal{H}^2 \,+\, 3\tau^2\!\int\limits_{\Sigma_\tau}\vert \nabla u \vert^{2} \, d\mathcal{H}^2\,+\,3\tau \!\int\limits_{\Sigma_\tau}\Big[\,g(X,\nabla u)\,-\,\vert \nabla u\vert\,\mathrm{H}\Big]\,d\mathcal{H}^2\\
&\phantom{\ \ \ \!=\!\!\!\!\!\int\limits_{(s,t)\cap \mathcal{T}}\!\!\Bigg\{}+\,\int\limits_{\Sigma_\tau}\,-\,\frac{3}{2}g(X,\nu)\mathrm{H}+\bigg[\frac{\mathrm{R}}{2}\,+\,\mathrm{div}(X)\,-\,\frac{1}{4}\,\vert X\vert^2\,+\,\bigg\vert \frac{\nabla^{\top}\vert \nabla u\vert}{\vert \nabla u\vert}\,+\,\frac{1}{2}\,X^{\top}\bigg\vert^2\\
&\phantom{\ \ \ \,\!=\!\!\!\!\!\int\limits_{(s,t)\cap \mathcal{T}}\!\!\Bigg\{+\,\int\limits_{\Sigma_\tau}\,-\,\frac{3}{2}g(X,\nu)\mathrm{H}+\bigg[}+\,\frac{\vert \ringg{\mathrm{h}}\vert^{2}}{2}\,+\,\frac{3}{4}\,\vert X^{\perp}\vert^2\,+\,\frac{3\mathrm{H}^2}{4}\bigg]\, d\mathcal{H}^2\Bigg\}\,d\tau\\
&=\!\!\!\!\!\int\limits_{(s,t)\cap \mathcal{T}}\!\!\Bigg\{4\pi\ -\,\int\limits_{\Sigma_\tau}\frac{\mathrm{R}^{\Sigma_\tau}}{2}\,d\mathcal{H}^2 \,+\, \frac{3}{4}\int\limits_{\Sigma_\tau}\bigg[\frac{2\vert \nabla u \vert}{1-u}\,-\,\mathrm{H}\,+\,g(X,\nu)\bigg]^2\,d\mathcal{H}^2 \\
&\phantom{\ \ \!=\!\!\!\!\!\int\limits_{(s,t)\cap \mathcal{T}}\!\!\Bigg\{}+\int\limits_{\Sigma_\tau}\bigg[\frac{1}{2}\bigg(\mathrm{R}+2\,\mathrm{div}(X)- \frac{1}{2}\vert X\vert^2 \bigg)+\bigg\vert \frac{\nabla^{\top}\vert \nabla u\vert}{\vert \nabla u\vert}+\frac{1}{2}X^{\top}\bigg\vert^2+\frac{\vert \ringg{\mathrm{h}}\vert^{2}}{2}\bigg]\, d\mathcal{H}^2\Bigg\}\,d\tau\,.\label{feq1}
\end{align}
We now observe that the Euler characteristic of every regular level sets of $u$ is less than or equal to $2$. This follows from Step~4, taking into account that, since $M$ is orientable, every connected components of a regular level set of $u$ is itself orientable.
Consequently, the Gauss--Bonnet theorem implies 
$$
4\pi\ -\,\int\limits_{\Sigma_\tau}\frac{\mathrm{R}^{\Sigma_\tau}}{2}\,d\mathcal{H}^2\, \geqslant \,0\,,
$$
for every $\tau\in \mathcal{T}$. Then, the conclusion of Theorem~\ref{GenPMTmon} follows from equality~\eqref{feq1}, as 
\begin{equation}\label{feq2}
\mathrm{R}\,+\,2\,\mathrm{div}(X)\,-\, \frac{1}{2}\,\vert X\vert^2\,=\, \mathrm{R}_X^{(k)}\,+\,\frac{k+2}{2k}\,\vert X\vert^2 
\end{equation}
and $ \mathrm{R}_X^{(k)} \geqslant 0$, by assumption.
\end{proof}

\begin{remark}
We observe that by the same argument, under the assumptions $(a)$ and $(b)$ of Theorem~\ref{GenPMTmon}, if the function $u$ is any solution of $\mathcal{L}_X u=0$ taking values in an interval $(\tau_1,\tau_2)\subseteq (-\infty,1)$, the function $F$ defined by formula~\eqref{eq0} on the interval $I=(1/(1-\tau_1),1/(1-\tau_2))$ is nondecreasing on the set
\begin{equation}
\mathcal{T}\,=\,\{t\in I\,:\, 1-1/t\, \,\text{is a regular value of $u$}\,\}\,.
\end{equation}
\end{remark}

Combining Theorem~\ref{GenPMTmon} with standard facts about the asymptotic behavior of the Green's functions near a pole, one obtains the following rigidity statement.

\begin{proposition}\label{Constantcase}
Under the assumptions of Theorem~\ref{GenPMTmon}, if the function $F$ is constant on the set $\mathcal{T}$ (defined in formula~\eqref{mathcalT}), then:
\begin{itemize}
\item[$(a)$] If $\mathrm{R}_X^{(k)}=\mathrm{R}+2\,\mathrm{div}(X)-(1+1/k)\, \vert X\vert^2\, \geqslant\,0$, for some $k\in (-\infty,-2)\cup (0,+\infty)$, then $(M,g)$ is isometric to $(\R^3,g_{{\mathrm{eucl}}})$.
\item[$(b)$] If $\mathrm{R}_X^{(-2)}\!=\mathrm{R}+2\,\mathrm{div}(X)-(1/2)\, \vert X\vert^2\, \geqslant\,0$, then $(M,g)$ is conformally isometric to $(\R^3, g_{{\mathrm{eucl}}})$.
\end{itemize}
\end{proposition}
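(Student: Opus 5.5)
If $F$ is constant on $\mathcal{T}$, I would read off from identity~\eqref{feq1} that every integrand vanishes on almost every regular level set. Since $\mathcal{T}$ has full measure, this gives, on $M_o\setminus\mathrm{Crit}(u)$ and hence by continuity on a dense open set,
\begin{equation*}
\ringg{\mathrm{h}}=0,\qquad \frac{\nabla^{\top}\vert\nabla u\vert}{\vert\nabla u\vert}+\frac12 X^{\top}=0,\qquad \mathrm{H}=\frac{2\vert\nabla u\vert}{1-u}+g(X,\nu),\qquad \mathrm{R}_X^{(k)}+\frac{k+2}{2k}\vert X\vert^2=0.
\end{equation*}
In addition, each regular level set has Euler characteristic exactly $2$, so it is a single sphere by Step~4. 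I would then split according to the sign of $(k+2)/(2k)$.

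\emph{Case (a).} Here $k\in(-\infty,-2)\cup(0,+\infty)$, so $(k+2)/(2k)>0$. Since $\mathrm{R}_X^{(k)}\geqslant0$, the last condition forces $X\equiv0$ on $M_o\setminus\mathrm{Crit}(u)$. By density and smoothness, $X\equiv0$ on $M$ and $\mathrm{R}=0$. Then $u$ is harmonic, and the remaining identities are exactly the equality conditions in the monotonicity of~\cite{Ago_Maz_Oro_2}: the level sets are totally umbilic, $\vert\nabla u\vert$ is constant on them, and $\mathrm{H}=2\vert\nabla u\vert/(1-u)$. I would show that $\mathrm{Crit}(u)=\emptyset$. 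Near the pole, expansion~\eqref{feq10} gives $\vert\nabla u\vert\neq0$. Along the flow of $\nabla u/\vert\nabla u\vert^2$, the quantity $\vert\nabla u\vert$ is constant on level sets and satisfies an ODE in $u$ forced by the $\mathrm{H}$ relation; this keeps it away from zero, so the regular region is open and closed. With no critical points, $g=\vert\nabla u\vert^{-2}du^2+g_{\Sigma_u}$ is a warped product over a sphere, since $\nabla^{\top}\vert\nabla u\vert=0$ and $\ringg{\mathrm{h}}=0$. Integrating $\partial_u\vert\nabla u\vert=-\vert\nabla u\vert\,\mathrm{H}$ together with the $\mathrm{H}$ relation, and normalising with~\eqref{feq10}, I expect $\vert\nabla u\vert=(1-u)^2$, so that $r=1/(1-u)$ is a distance function. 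Gauss--Bonnet with $\chi=2$ and the value $F\equiv\lim_{t\to0^+}F(t)=0$ then make the level-set metric round of radius $r$. Hence $g=dr^2+r^2g_{\SSS^2}$ and $(M,g)\cong(\R^3,g_{\mathrm{eucl}})$.

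\emph{Case (b).} Here $k=-2$, and we only obtain $\mathrm{R}+2\,\mathrm{div}(X)-\frac12\vert X\vert^2=0$ together with the first three identities; $X$ need not vanish. The trick I would try is a conformal change: set $\tilde g=e^{2\varphi}g$ with $\varphi$ chosen so that $u$ becomes $\tilde g$-harmonic. In dimension three, $\Delta_{\tilde g}u=e^{-2\varphi}(\Delta u+g(\nabla\varphi,\nabla u))$, so I need $g(\nabla\varphi+\frac12X,\nabla u)=0$. The tangential identity $\nabla^{\top}\log\vert\nabla u\vert=-\frac12X^{\top}$ suggests taking $\varphi$ as a function of $\log\vert\nabla u\vert$ and $u$, which would make $X=-2\nabla\varphi$, the gradient asserted in Theorem~\ref{GenPMT}(2). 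I would then verify that in $\tilde g$ the level sets remain umbilic with $\vert\tilde\nabla u\vert$ constant on them, and that $\tilde{\mathrm{R}}=0$ via the conformal transformation law combined with $\mathrm{R}_X^{(-2)}=0$. This would reduce case (b) to case (a) for $\tilde g$.

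The main obstacle is case (b): proving that $X$ is globally a gradient (a closedness argument for $X^\flat$ using all the identities, including the normal component of $X$ through the $\mathrm{H}$ relation) and ruling out critical points. I would first try to show directly that $\vert\nabla u\vert\,e^{\frac12\int X}$ is a function of $u$ alone.
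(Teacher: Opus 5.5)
Your case $(a)$ follows the paper's route: once $X\equiv0$ and $\mathrm{R}=0$, it reduces to the harmonic rigidity of \cite{Ago_Maz_Oro_2}. Your density argument even gives $X\equiv0$ on all of $M$ directly. There are two slips. First, in the coordinate $u$ along the flow of $\nabla u/|\nabla u|^2$ one has $\partial_u|\nabla u|=-\mathrm{H}$, not $-|\nabla u|\,\mathrm{H}$; only the former integrates to $|\nabla u|=(1-u)^2$. Second, roundness of the cross-sections does not follow from Gauss--Bonnet together with $F\equiv0$, since these give only total curvature $4\pi$ and area $4\pi r^2$. What you need is the pointwise identity $\mathrm{R}=0$: for $g=dr^2+r^2c$ one has $\mathrm{R}=(\mathrm{R}_c-2)/r^2$, hence $\mathrm{R}_c\equiv2$.

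The genuine gap is case $(b)$. The two decisive facts, that $X$ is a gradient and that $u$ has no critical points, are left open. No closedness argument for $X^\flat$ is needed, because the equality conditions determine $X$ completely and explicitly. Comparing $\mathrm{H}=2|\nabla u|/(1-u)+g(X,\nu)$ with \eqref{exprH}, i.e.\ $\mathrm{H}=\frac12 g(X,\nu)-\partial_\nu\log|\nabla u|$, gives the normal component $g(X,\nu)=-2\,\partial_\nu\log|\nabla u|-4|\nabla u|/(1-u)$. Combined with $X^\top=-2\nabla^\top\log|\nabla u|$, this says
\[
X=-\nabla\log\Big(\frac{|\nabla u|^2}{(1-u)^4}\Big)=:\nabla f
\]
wherever $\nabla u\neq0$. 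Equivalently $|\nabla u|\,e^{f/2}=(1-u)^2$, which is exactly the identity you hoped to prove.

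Critical points are then excluded as in the paper. Let $T$ be maximal with $\nabla u\neq0$ on $\{u<1-1/T\}$; $T>0$ by the expansion at the pole. If $T<+\infty$, take a curve $\gamma$ in this set ending at a critical point $p\in\{u=1-1/T\}$. Then $(f\circ\gamma)'=g(X,\gamma')$ is bounded, while $f\circ\gamma\to+\infty$ because $|\nabla u|\to0$ and $1-u\to1/T$; this is a contradiction.

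Since $f\to0$ at $o$, $f$ extends smoothly to $M$. With your $\varphi=-f/2$ one then gets:
\begin{itemize}
\item $|\nabla u|_{\tilde g}=(1-u)^2$;
\item umbilic level sets with $\tilde{\mathrm{H}}=2|\nabla u|_{\tilde g}/(1-u)$;
\item $\tilde{\mathrm{R}}=e^{f}\big(\mathrm{R}+2\,\mathrm{div}(X)-\tfrac12|X|^2\big)=0$.
\end{itemize}
Hence $\tilde g=dt^2+t^2c$ with $t=1/(1-u)$ and $c$ round, which is how the paper concludes.
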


\begin{proof}
By Step~1 of Section~\ref{Greensection}, the operator $\mathcal{L}_X$ admits a unique positive minimal Green's function $\mathcal{G}^{B_{r_1}(o)}_o$ with pole at $o$ in an open ball $B_{r_1}(o)$, for any $r_1<\mathrm{inj}(o)$. It also turns out that, for any normal coordinate system $(x^1,x^2,x^3)$ centered at $o$ and defined on the ball $B_{r_1}(o)$, the asymptotic behaviors 
$$
\vert \mathcal{G}_{o} - 1/(4\pi r)\vert\, = \, o (r^{-1})\qquad\text{ and }\qquad\vert \nabla \mathcal{G}_{o}^{B_{r_1}(o)}\,+\,(\nabla r/ 4\pi r^{2})\vert\,= \, o(r^{-2})
$$
occur (here, $r$ denotes the distance function from the pole $o\in M$). Now, since $\mathcal{G}_{o}=\mathcal{G}_{o}^{B_{r_1}(o)} +w$, where $w$ is a smooth function that satisfies $\mathcal{L}_X w=0$ in $B_{r_1}(o)$ and an appropriate boundary condition, the function $\mathcal{G}_o$ has no critical points in a neighborhood of the point $o$.
Therefore, there exists a maximal time $T>0$ such that $\na u \neq 0$ in $u^{-1}(-\infty, 1 - 1/T)$. Then, the function $F$ is continuously differentiable on the interval $(0,T)$ and, by the assumption that $F$ is constant on the set $\mathcal{T}$, there holds $F'\equiv 0$ in $(0,T)$. Thus, all nonnegative summands in formula~\eqref{feq1} are forced to vanish, for every $t \in (0,T)$. Now, we need to distinguish between cases $(a)$ and $(b)$. In case $(a)$, by equality~\eqref{feq2}, the claim follows in the same way as in~\cite[Corollary 1.3]{Ago_Maz_Oro_2}. Notably, $X$ turns out to be zero on the whole $M$. 
In case $(b)$, the result that all nonnegative summands in formula~\eqref{feq1} are forced to vanish for every $t \in (0,T)$ implies that
\begin{equation}\label{feq3}
X=-\nabla \log\left(\frac{\vert \nabla u\vert^2}{(1-u)^4}\right)
\end{equation}
in the open set $u^{-1}(-\infty, 1 - 1/T)$. If $T<+\infty$, there exists $p\in \Sigma_{T}$ such that $\nabla u (p)=0$. We consider $\gamma:[0,\ell]\to M$ a smooth curve parametrized by the arclength from $o$ to $p$ and satisfiying $\gamma((0,\ell))\subseteq u^{-1}(-\infty, 1 - 1/T)$. Let $\widetilde{f}$ be the function given by 
$$\widetilde{f}(t)=-\log\left(\frac{\vert \nabla u\vert^2}{(1-u)^4}\right)(\gamma(t))$$
for every $t\in (0,\ell]$. We observe that
\begin{equation}
\widetilde{f}(\ell-\varepsilon)-\widetilde{f}(\ell/2)=\int\limits_{\ell/2}^{\ell-\varepsilon} \widetilde{f}'(t)\,dt=\int\limits_{\ell/2}^{\ell-\varepsilon} g\!\left(X(\gamma(t)),\gamma'(t) \right)dt\,,
\end{equation}
which yields
\begin{equation}
\vert \widetilde{f}(\ell-\varepsilon)\vert\, \leqslant \,\vert \widetilde{f}(\ell/2)\vert\,+\,\ell \,\sup_{M} \vert X\vert\,<\,\infty\,,
\end{equation}
for any $\varepsilon\in (0,\ell/2)$. This is not possible since $\widetilde{f}(t)\to+\infty$ as $t\to \ell^{-}$, by virtue of the fact that $\nabla u (p)=0$. Therefore, $T=+\infty$, $u$ has no critical points and $X$ is the gradient of the function $f$ given by
\begin{equation}
f=-\log\left(\frac{\vert \nabla u\vert^2}{(1-u)^4}\right)
\end{equation}
on $M\setminus\{o\}$. Notice that, since $X$ is a smooth vector field on the whole $M$ and $f(p)\to 0$ as $p\to o$, the equality~\eqref{feq3} also implies that $f$ is smooth on all $M$.
Since $|\nabla u| \neq 0$ everywhere, all the level sets of $u$ are regular and diffeomorphic to each other. More precisely, by the vanishing of the Gauss--Bonnet term in equality~\eqref{feq1}, they are all diffeomorphic to a $2$--sphere and $M$ is diffeomorphic to $\R^3$. Furthermore, the metric $g$ can be written on $M \setminus \{o\}$ as 
$$
g \, = \, \frac{du \otimes du}{\vert \nabla u\vert^2} \, +
 \, g_{\alpha \beta}
(u, \!\vartheta) \, d\vartheta^\alpha \otimes d\vartheta^\beta \, ,
$$ 
where $g_{\alpha\beta}(u, \!\vartheta) \, d\vartheta^\alpha \otimes d\vartheta^\beta$ represents the metric induced by $g$ on the level sets of $u$. Exploiting the vanishing of the traceless second fundamental form of the level sets in equality~\eqref{feq1}, together with 
$$
\mathrm{H}\,=\,\frac{2\vert \nabla u \vert}{1-u}\,+\,g(\nabla f,\nu)\,,
$$
it turns out that the coefficients $g_{\alpha\beta}(u,\!\vartheta)$ satisfy the following first order system of PDE's
$$
\partial_u g_{\alpha\beta}\,-\, g_{\alpha\beta} \partial_u f \, = \, 2 \, g_{\alpha\beta}/(1-u)\,,
$$
hence, 
$$
e^{-f(u, \vartheta)}g_{\alpha \beta}(u, \!\vartheta) \, d\vartheta^\alpha \otimes d\vartheta^\beta=(1-u)^{-2}c_{\alpha\beta}(\theta)\, d\vartheta^\alpha \otimes d\vartheta^\beta\,.
$$
Thus, we have
$$
\widetilde{g}=e^{-f}g\,=\,\frac{\vert \nabla u\vert^2}{(1-u)^4}\, \left(\frac{du \otimes du}{\vert \nabla u\vert^2} \, + \, g_{\alpha \beta}(u, \!\vartheta) \, d\vartheta^\alpha \otimes d\vartheta^\beta\right)=\frac{du \otimes du}{(1-u)^4} \, + \, \frac{c_{\alpha\beta}(\theta)}{(1-u)^2}\, d\vartheta^\alpha \otimes d\vartheta^\beta\,.
$$
Notice that the vanishing of term $\mathrm{R}\,+\,2\,\mathrm{div}(X)\,-\, (1/2)\,\vert X\vert^2$ in formula~\eqref{feq1} implies that the scalar curvature $\widetilde{\mathrm{R}}$ of the metric $\widetilde{g}$ is zero everywhere. Therefore, by considering the function $t=1/(1-u)$ and by observing that the metric $\widetilde{g}$ can be written as 
$$
\widetilde{g}=dt\otimes dt+t^2 c_{\alpha\beta}(\theta)\,d\vartheta^\alpha \otimes d\vartheta^\beta\,,
$$
the scalar curvature of each level set of the function $t$ with respect to the metric $c_{\alpha\beta}(\theta)\,d\vartheta^\alpha \otimes d\vartheta^\beta$ is identically equal to $2$. Since each level set of the function $t$, which is also a level set of the function $u$, is diffeomorphic to a $2$--sphere, we conclude that each of them is isometric to $(\SSS^2, g_{\SSS^2})$ and that $(M,\widetilde{g}) $ is isometric to $(\R^3, g_{\mathrm{eucl}})$.
\end{proof}

\section{Proof of Theorem~\ref{GenPMT}}\label{XPMTsection}
In Section~\ref{Greensection}, we established the existence of the minimal positive Green's function $\mathcal{G}_o$ for the operator $\mathcal{L}_X=\Delta-\,(1/2) \,\nabla_X$ with a pole at some point $o\in M$, in a complete asymptotically flat $3$--manifold $(M,g)$ of order $\tau\in (1/2,1)$, under the assumption that $X$ is a smooth admissible vector field of $M$. We also proved that:
\begin{itemize}
\item[$\circ$] For any normal coordinate system $(x^1,x^2,x^3)$ centered at $o$ and defined on an open ball $B_{r_o}(o)$, there exist constants $b_{i_1}^{(0)}$, $a^{(1)}$, $c_{i_1 i_2}^{(1)}$, $e_{i_1 i_2 i_3 i_4 }^{(1)}$, $b_{i_1}^{(2)}$, $d_{i_1 i_2 i_3 }^{(2)}$, $f_{i_1 i_2 i_3 i_4 i_5 }^{(2)}$, $a^{(3)}$, $c_{i_1 i_2}^{(3)}$, $e_{i_1 i_2 i_3 i_4 }^{(3)}$, $h_{i_1 i_2 i_3 i_4 i_5 i_6}^{(3)}$ and $\,l_{i_1 i_2 i_3 i_4 i_5 i_6 i_7 i_8}^{(3)}$, depending on the coefficients of the metric $g$ and the vector field $X$ and there exists a function $f\in C^{2}(M)$ such that
\begin{align*}
4\pi \mathcal{G}_o&=\frac{1}{\vert x\vert}+b_{i_1}^{(0)}\frac{x^{i_1}}{\vert x\vert}+\vert x\vert\Big[a^{(1)}+c_{i_1 i_2}^{(1)}\frac{x^{i_1} x^{i_2}}{\vert x\vert^2}+e_{i_1 i_2 i_3 i_4 }^{(1)}\frac{x^{i_1}x^{i_2} x^{i_3} x^{i_4}}{\vert x\vert^4}\Big]\\
&\quad \ +\vert x\vert^2\Big[b_{i_1}^{(2)}\,\frac{x^{i_1}}{\vert x\vert}+d_{i_1 i_2 i_3 }^{(2)}\frac{x^{i_1}x^{i_2} x^{i_3} }{\vert x\vert^3}+f_{i_1 i_2 i_3 i_4 i_5 }^{(2)}\frac{x^{i_1}x^{i_2} x^{i_3} x^{i_4} x^{i_5}}{\vert x\vert^5}\Big]\\
&\quad \ +\vert x\vert^3\Big[a^{(3)}+c_{i_1 i_2}^{(3)}\frac{x^{i_1} x^{i_2}}{\vert x\vert^2}+e_{i_1 i_2 i_3 i_4 }^{(3)}\frac{x^{i_1}x^{i_2} x^{i_3} x^{i_4}}{\vert x\vert^4}+h_{i_1 i_2 i_3 i_4 i_5 i_6}^{(3)}\frac{x^{i_1}x^{i_2} x^{i_3} x^{i_4} x^{i_5} x^{i_6}}{\vert x\vert^6}\\
&\quad\quad\quad \quad\ +l_{i_1 i_2 i_3 i_4 i_5 i_6 i_7 i_8}^{(3)}\frac{x^{i_1}x^{i_2} x^{i_3} x^{i_4} x^{i_5} x^{i_6} x^{i_7} x^{i_8}}{\vert x\vert^8}\Big] + f,
\end{align*}
in $B_{r_o/4}(o)$. It then follows that
\begin{align}
\Big| \mathcal{G}_{o} - \frac{1}{4\pi r}\Big|&\, = \, o (r^{-1})\,,
\label{eq11}
\\
\Big\vert \nabla \mathcal{G}_{o}+\frac{1}{4\pi r^2}\,\nabla r\Big\vert& \,= \, o(r^{-2})\,,
\label{eq12}
\\
\!\!\!\!\!\!\!\!\!\!\!\!\!\!\!\!\!\!\!\!\Big\vert \nabla d\mathcal{G}_{o}-\frac{1}{4\pi r^2}\,\Big(\,\frac{2}{r} \, dr\otimes dr- \nabla d r\Big)\Big\vert& \, = \, o(r^{-3})\,.
\label{eq13}
\end{align}
where $r$ denotes the distance from the pole $o$. In particular, the function $\mathcal{G}_o$ has no critical points in a neighborhood of the point $o$.
\item[$\circ$] For any asymptotically flat coordinate chart $\big(U,(x^1,x^2,x^3)\big)$ of $(M,g)$, there is a positive constant ${A}$ such that
\begin{equation}\label{eq17}
\mathcal{G}_o=\frac{\ {A}\ }{\vert x\vert}+O_{2}(\vert x\vert^{-1-\tau})\,.
\end{equation}
Thus, the function $\mathcal{G}_o$ has no critical points also ``going to infinity''.
\end{itemize}

In Section~\ref{monotonicity formula section}, we considered the function $u\,=\,1-4\pi \mathcal{G}_o$, defined the function $F:(0, + \infty)\to \R$ as
 \begin{equation}
 F(t)\ = \ 4\pi t \ +\ \,t^3\!\!\!\int\limits_{\{u=1-\frac{1}{t}\}}\!\!\!\!\! \vert \nabla u \vert^{2} \ d\mathcal{H}^2 \, \, - \ t^2 \!\!\! \int\limits_{\{u=1-\frac{1}{t}\}}\!\!\!\!\!\vert \nabla u \vert\, \mathrm{H} \ d\mathcal{H}^2 
 \, \, + \ t^2 \!\!\! \int\limits_{\{u=1-\frac{1}{t}\}}\!\!\!\!\! g(X,\nabla u) \ d\mathcal{H}^2 \,,
 \end{equation}
 where $\mathrm{H}$ is the mean curvature on the regular part of the level set $\{u=1-1/t \}\setminus \{\vert \nabla u \vert=0\}$ computed with respect to $\nu={\nabla u}/{\vert \nabla u \vert}$ and showed that
 \begin{equation}
 0 < s \leqslant t < +\infty \quad\text{and}\quad\text{$1- 1/s$, $1- 1/t$ are regular values of $u$} \quad \implies\quad F(s) \, \leqslant \, F(t) \, ,
 \end{equation}
 if 
 $$
 \mathrm{R}_X^{(k)}=\mathrm{R}+2\,\mathrm{div}(X)-(1+1/k)\, \vert X\vert^2\, \geqslant\,0\,,
 $$
 for some $k\in \R\setminus (-2,0]$ and the second integral homology $H_2(M;\Z)$ does not contain any spherical class.\\
By combining these results, we infer that
\begin{equation}\label{consequencemonotonicityformula}
\lim\limits_{t\to 0^+} F(t) \leqslant \lim\limits_{t\to +\infty} F(t)\,.
\end{equation}

We now analyze these two limits.

\begin{lemma}\label{Lemmalimittozero}
Under the conditions~\eqref{eq11},~\eqref{eq12},~\eqref{eq13} and the assumption $\mathcal{G}_o\to 0$ at infinity, we have
\begin{equation}\label{limittozero}
\lim_{t\to 0^+} F(t) \,=\,0\,.
\end{equation}
\end{lemma}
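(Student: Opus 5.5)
We will compute $F(t)$ directly for $t$ small. As $t\to0^+$ the level $1-1/t\to-\infty$, so $\Sigma_t=\{u=1-1/t\}$ concentrates at the pole $o$, where $4\pi\mathcal{G}_o$ blows up. By \eqref{eq12}, $\nabla u$ does not vanish near $o$, so for $t$ small every $1-1/t$ is a regular value and $F$ is given pointwise by \eqref{eq0}. Writing $4\pi\mathcal{G}_o=1/t$ on $\Sigma_t$, \eqref{eq11} gives that $\Sigma_t$ lies in the annulus $\{r\in(t(1-o(1)),\,t(1+o(1)))\}$. Together with \eqref{eq12} and the implicit function theorem in normal coordinates, $\Sigma_t$ is a smooth graph over the geodesic sphere $\partial B_t(o)$, converging after rescaling by $1/t$ to the unit round sphere in $C^1$. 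Using \eqref{eq13} we can upgrade this to $C^2$.

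Next we estimate each term of $F$ using the expansions.

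\begin{enumerate}
\item On $\Sigma_t$, \eqref{eq12} gives $|\nabla u|=4\pi|\nabla\mathcal{G}_o|=r^{-2}(1+o(1))=t^{-2}(1+o(1))$.
\item The area satisfies $|\Sigma_t|=4\pi t^2(1+o(1))$.
\item Hence $t^3\int_{\Sigma_t}|\nabla u|^2\,d\mathcal{H}^2=t^3\cdot t^{-4}\cdot4\pi t^2(1+o(1))=4\pi t(1+o(1))\to0$.
\item For the mean curvature we use \eqref{exprH}: $\mathrm{H}=\frac{1}{2}g(X,\nu)-\nabla du(\nu,\nu)/|\nabla u|$. By \eqref{eq13}, $\nabla du(\nu,\nu)=-\frac{2}{r^3}(1+o(1))$, since $\nabla dr(\nabla r,\nabla r)=0$ and $\nu=\nabla r+o(1)$. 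Therefore $\mathrm{H}=\frac{2}{r}(1+o(1))+O(1)=\frac{2}{t}(1+o(1))$.
\item This gives $t^2\int_{\Sigma_t}|\nabla u|\,\mathrm{H}=t^2\cdot t^{-2}\cdot 2t^{-1}\cdot4\pi t^2(1+o(1))=8\pi t(1+o(1))\to0$.
\item The drift term is bounded by $t^2\sup_{B_1(o)}|X|\cdot t^{-2}(1+o(1))\cdot4\pi t^2\to0$.
\item The first term $4\pi t$ also tends to $0$.
\end{enumerate}

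The conclusion does not require the cancellation between the gradient and mean curvature terms, since each term is individually $O(t)$. It therefore suffices to show, uniformly on $\Sigma_t$, that $|\nabla u|=O(t^{-2})$, $|\mathrm{H}|=O(t^{-1})$ and $\mathcal{H}^2(\Sigma_t)=O(t^2)$. Consequently the only real point is the geometric control of $\Sigma_t$.

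The main obstacle is the area bound: the level set must lie in a thin annulus and be a graph over the geodesic sphere, so that its area is comparable to $t^2$. We will handle this with the coarea-free argument above. By \eqref{eq12}, $g(\nabla u,\nabla r)>0$ near $o$, so each radial geodesic meets $\Sigma_t$ exactly once, and the radial projection $\Sigma_t\to\partial B_t(o)$ is a diffeomorphism. Its Jacobian is controlled by the ratio $|\nabla u|/g(\nabla u,\nabla r)=1+o(1)$ together with the comparability $r\simeq t$. This yields $\mathcal{H}^2(\Sigma_t)\le C t^2$, and the bound on $\mathrm{H}$ follows from \eqref{eq13} as above. The hypothesis that $\mathcal{G}_o\to0$ at infinity is used only to ensure that $\{u<1-1/t\}$ is a small neighbourhood of $o$, and in particular that $\Sigma_t$ has no other components far away.
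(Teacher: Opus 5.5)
Your argument is correct, but it takes a different route from the paper.

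\textbf{The paper's route.} The paper never estimates $F(t)$ pointwise. It first invokes the monotonicity of $F$ on $(0,t_0)$, i.e.\ Theorem~\ref{GenPMTmon} and hence the curvature and topological hypotheses, to know that $\ell=\lim_{t\to0^+}F(t)$ exists. It then integrates, $\mathcal{F}(t)=\int_0^tF(\tau)\,d\tau$, and uses the coarea formula to rewrite $\mathcal{F}$ as a bulk integral over $\{u<1-1/t\}\subseteq B_{C_2t}(o)$. This integral is bounded by $O(t^2)$ using only the pointwise bounds \eqref{eq14} and the volume and area of small geodesic balls. Finally $\ell=0$ follows from a generalized de l'H\^opital rule.

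\textbf{Your route.} You bound each term of $F(t)$ directly. The key input is $\mathcal{H}^2(\Sigma_t)\leqslant Ct^2$, which you obtain as follows:
\begin{itemize}
\item strict radial monotonicity of $u$ near $o$ makes $\Sigma_t$ a radial graph $r=\rho(\theta)$ with $\rho\simeq t$;
\item the flux identity $g(\nu,\nabla r)\,d\mathcal{H}^2=\rho^2 J\,d\theta$, with $J$ the volume density in geodesic polar coordinates, makes your Jacobian claim precise;
\item this combines with the lower bound on $g(\nu,\nabla r)$ coming from \eqref{eq12}.
\end{itemize}

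\textbf{Comparison.} The paper's route needs no analysis of the geometry of the level sets, only bulk integrals over geodesic balls. However, it genuinely relies on monotonicity to know that the limit exists. Without it, the de l'H\^opital step only yields $\liminf F\leqslant 0\leqslant\limsup F$. Your route is self-contained from exactly the hypotheses listed in the lemma. It also gives the quantitative rate $F(t)=O(t)$ and does not use the sign of $\mathrm{R}_X^{(k)}$ at all.

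\textbf{A simplification.} For the mean curvature you only need the crude bound $\vert\mathrm{H}\vert\leqslant\frac12\vert X\vert+\vert\nabla du\vert/\vert\nabla u\vert\leqslant C/r$, which follows from \eqref{eq12}--\eqref{eq13}. The precise asymptotics $\mathrm{H}=\frac{2}{r}(1+o(1))$ and the $C^2$ upgrade of the rescaled level sets are therefore superfluous.
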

\begin{proof}
We start by recalling that, setting $u(o)=-\infty$, the function $u:M\to [-\infty,1)$ is continuous and proper, as $u\to 1$ at infinity. This fact, together with condition~\eqref{eq12}, implies the existence of some $t\in (0,t_0)$ such that the open set $\{u<1-1/t_0\}$ contains no critical points of $u$. Thus, $(0,t_0)\subseteq \mathcal{T}$, where the set $\mathcal{T}$ is defined by formula~\eqref{mathcalT}. Then, $F$ is continuously differentiable and nondecreasing on the interval under consideration. This monotonicity guarantees the existence of the limit of $F(t)$ as $t\to 0^+$. We will denote this limit by $\ell$ and in order to compute it, we consider the function
\begin{equation}
\mathcal{F}(t)\,=\,2\pi t^2+\!\!\!\int\limits_{\{u<1-\frac{1}{t}\}}\!\!\!\Bigg[\frac{\vert \nabla u \vert^{2}}{(1-u)^3}+\frac{g(\nabla \vert \nabla u \vert,\nabla u)}{(1-u)^2}+\frac{1}{2}\frac{g(X,\nabla u)}{(1-u)^2}\Bigg]\,\frac{\vert \nabla u \vert}{(1-u)^2}\,d\mu
\end{equation}
for every $t\in (0,t_0)$. First, we show that the function $\mathcal{F}$ is well--defined, moreover, the following argument will also imply that $\lim_{t\to 0^+}\mathcal{F}(t)/t=0$. 
We observe that the decay conditions~\eqref{eq11},~\eqref{eq12} and~\eqref{eq13} yield 
\begin{equation}\label{eq14}
\frac{C_1}r \leqslant 1-u \leqslant\frac{C_2}r\,,
\qquad\quad
\frac{C_3}{r^2} \leqslant\vert \nabla u \vert \leqslant \frac{C_4}{r^2}\,,
\qquad\quad
\vert \nabla du \vert \leqslant \frac{C_5}{r^3}\,,
\end{equation}
for some positive constants $C_{i}>0$, $i=1, \ldots, 5$. Therefore, the open set $\{u<1-1/t\}$ is contained in the ball $B_{C_2 t}(o)$ and 
$$\frac{\vert \nabla u \vert}{(1-u)^2} \leqslant \frac{C_4}{C_1^2}\,,\quad\quad \quad\bigg\vert\frac{g(\nabla \vert \nabla u \vert,\nabla u)}{(1-u)^3}\bigg\vert\leqslant \frac{C_5}{C_1^3}\,,\quad\quad \quad\bigg\vert\frac{g(X,\nabla u)}{(1-u)^2}\bigg\vert \leqslant \frac{C_4}{C_1^2}\,\vert X\vert\,.$$
Thus, we get
\begin{align}
&\int\limits_{\{u<1-\frac{1}{t}\}}\Bigg\vert\frac{\vert \nabla u \vert^{2}}{(1-u)^3}+\frac{g(\nabla \vert \nabla u \vert,\nabla u)}{(1-u)^2}+\frac{1}{2}\frac{g(X,\nabla u)}{(1-u)^2}\Bigg\vert\,\frac{\vert \nabla u \vert}{(1-u)^2}\,d\mu\\
&\quad\quad \quad \leqslant \int\limits_{B_{C_2 t}(o)}\Bigg\vert\frac{\vert \nabla u \vert^{2}}{(1-u)^3}+\frac{g(\nabla \vert \nabla u \vert,\nabla u)}{(1-u)^2}+\frac{1}{2}\frac{g(X,\nabla u)}{(1-u)^2}\Bigg\vert\,\frac{\vert \nabla u \vert}{(1-u)^2}\,d\mu\\
&\quad\quad \quad \leqslant \Big(\frac{C_4^2}{C_1^4}+ \frac{C_5}{C_1^3}\Big)\,\frac{C_4}{C_1^2} \int\limits_{B_{C_2 t}(o)}\!\!\! (1-u)\,d\mu\,+\,\frac{C_4^2}{C_1^4}\,\bigg(\sup_{B_{C_2 t_0}(o)}\!\!\vert X\vert\bigg)\,\mathrm{Vol}\big(B_{C_2 t}(o)\big)\,.\label{eq16}
\end{align}
Now, the estimates $\mathrm{Vol}\big(B_{r}(o)\big)=\frac{4}{3}\pi r^3 +O(r^{5})$ and $\mathrm{Area}\big(\partial B_{r}(o)\big)=4\pi r^2 +O(r^4)$ imply
\begin{equation}\label{eq15}
\mathrm{Vol}\big(B_{r}(o)\big) \leqslant \frac{4}{3}C_6\pi r^3 \qquad\text{ and }\qquad \mathrm{Area}\big(\partial B_{r}(o)\big) \leqslant 4C_7\pi r^2\,,
\end{equation}
for some positive constants $C_{6}, C_{7}>0$ and for any $r\in (0,r_0)$, where $r_0>0$ is sufficiently small. Therefore, by applying the coarea formula in inequality~\eqref{eq16} and using estimates~\eqref{eq14} and~\eqref{eq15}, we obtain
\begin{align*}
\int\limits_{\{u<1-\frac{1}{t}\}}\!\Bigg\vert\frac{\vert \nabla u \vert^{2}}{(1-u)^3}+&\,\frac{g(\nabla \vert \nabla u \vert,\nabla u)}{(1-u)^2}+\frac{1}{2}\frac{g(X,\nabla u)}{(1-u)^2}\Bigg\vert\,\frac{\vert \nabla u \vert}{(1-u)^2}\,d\mu\\
\leqslant&\, 2\pi C_2^3C_7\Big(\frac{C_4^2}{C_1^4}+ \frac{C_5}{C_1^3}\Big)\,\frac{C_4}{C_1^2}\,t^2+\,C_6 C_2^3 \pi\bigg(\sup_{B_{C_2 t_0}(o)}\!\!\vert X\vert\bigg)\,\frac{4C_4^2}{3C_1^4}\,t^3\,,
\end{align*}
for all $t\in (0,t_0)$, possibly passing to a smaller $t_0>0$.
The reason for introducing the function $\mathcal{F}(t)$ lies in the fact that 
$$\mathcal{F}(t)=\int_{0}^t F(\tau)\,d\tau\,,$$
by the coarea formula. Thus, by using that $F$ is continuously differentiable in the set $\mathcal{T}$, it follows that $\mathcal{F}$ is of class $C^2$
on the interval $(0,t_0)$ and $\mathcal{F}'(t)=F(t)$ for every $t\in (0,t_0)$. Then, the generalized version of de~l'H\^opital's rule in~\cite[Theorem~II]{taylor1} gives
\begin{equation}
\ell=\limsup\limits_{t\to 0^+} F(t) \geqslant \limsup\limits_{t\to 0^+}\frac{\mathcal{F}(t)}{t}=0= \liminf\limits_{t\to 0^+}\frac{\mathcal{F}(t)}{t} \geqslant \liminf\limits_{t\to 0^+} F(t)=\ell\,,
\end{equation}
which implies $\ell=0$.
\end{proof}

By formula~\eqref{eq17}, we have a positive constant $B$ such that
\begin{equation}\label{eq18}
u=1-\frac{\ {B}\ }{\vert x\vert}+O_{2}(\vert x\vert^{-1-\tau})\,,
\end{equation}
as $u=1-4\pi \mathcal{G}_o$.

\begin{lemma}\label{lemM}
Under the assumption of Theorem~\ref{GenPMT}, we have
\begin{equation}\label{limtoinfinityF}
\lim_{t\to+\infty} F(t) \, \leqslant \,8\pi m_{X}/{B}\,.
\end{equation}
\end{lemma}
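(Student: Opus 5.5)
The plan is to compute $F(t)$ directly on large level sets, using the expansion \eqref{eq18} of $u$ in an asymptotically flat chart. For $t$ large, the level set $\Sigma_t=\{u=1-1/t\}$ lies in the asymptotic region and has no critical points. By \eqref{eq18} it is a graph over the coordinate sphere $\{|x|=r\}$ with $r=Bt+O(t^{1-\tau})$. We write $|\nabla u|=B/|x|^2+O(|x|^{-2-\tau})$ and $\nu=\nabla u/|\nabla u|$. Substituting $t=1/(1-u)$, we rewrite
$$F(t)=4\pi t+t^3\!\int_{\Sigma_t}|\nabla u|^2-t^2\!\int_{\Sigma_t}|\nabla u|\,\mathrm{H}+t^2\!\int_{\Sigma_t}g(X,\nabla u).$$
The key structural observation is that the first three terms are exactly those of the Agostiniani--Mazzieri--Oronzio monotone quantity in~\cite{Ago_Maz_Oro_2}. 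For them one knows (after expansion) that
$$4\pi t+t^3\!\int|\nabla u|^2-t^2\!\int|\nabla u|\,\mathrm{H}\;\longrightarrow\; \frac{1}{B}\lim_{r\to\infty}\frac12\int_{\{|x|=r\}}(\partial_jg_{ij}-\partial_ig_{jj})\frac{x^i}{|x|}\,d\sigma_{\mathrm{eucl}} = \frac{8\pi m_{\mathrm{ADM}}}{B},$$
modulo the correction coming from the drift.

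The first step is to handle the drift. Here $\Delta u=\tfrac12 g(X,\nabla u)$ rather than $0$, so the expansions must be redone. Since $X=O_1(|x|^{-1-\tau_0})$, the term $t^2\int g(X,\nabla u)$ behaves like $t^2\cdot(B/r^2)\int X^i\frac{x^i}{|x|}\,d\sigma\approx \frac{1}{B}\int_{\{|x|=r\}}X^i\frac{x^i}{|x|}\,d\sigma$, with $r\approx Bt$. This accounts for the $2X^i$ term in \eqref{formXADMmass}, provided that the first three terms also produce no further $X$-contribution at leading order. I expect that they do produce one: the drift perturbs both $\mathrm{H}$ and the next-order term of $u$. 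I would therefore avoid a pointwise expansion and instead argue as follows.

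Use the identity \eqref{exprH}, $|\nabla u|\mathrm{H}=\tfrac12 g(X,\nabla u)-|\nabla u|^{-1}g(\nabla|\nabla u|,\nabla u)$. Then
$$F(t)=4\pi t+t^3\!\int|\nabla u|^2+t^2\!\int\frac{g(\nabla|\nabla u|,\nabla u)}{|\nabla u|}+\frac{t^2}{2}\int g(X,\nabla u).$$
Next apply the divergence theorem on the region $\{1-1/t_1<u<1-1/t\}$, and let $t_1$ tend to infinity or pass to coordinate spheres, to convert the quantities into flux integrals of $\nabla|\nabla u|$ and of $\nabla u$. The latter flux is controlled by the equation: $\mathrm{div}(\nabla u)=\tfrac12 g(X,\nabla u)$ is integrable, so $\int_{\Sigma_t}|\nabla u|$ tends to a finite constant related to $B$. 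In the expansion, the quantity $t^3\int|\nabla u|^2-t^2\int\partial_\nu|\nabla u|$ cancels at order $t$ against $4\pi t$. The remainder is an $O(1)$ term, which must be matched with the ADM flux $\frac{1}{2B}\int(\partial_jg_{ij}-\partial_ig_{jj})\frac{x^i}{|x|}$ plus the drift flux $\frac{1}{B}\int X^i\frac{x^i}{|x|}$. I would carry out this matching as in \cite{Ago_Maz_Oro_2} (or \cite{MMT}): compare $|\nabla u|$ with $|\nabla u|_{\mathrm{eucl}}$, and use $\partial_ig_{ij}-\partial_jg_{ii}$ from expanding $\Delta_g$ in coordinates.

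The main obstacle is that \eqref{eq18} only gives $O_2(|x|^{-1-\tau})$ with $\tau>1/2$. Consequently, second-order terms such as $t^3\int|\nabla u|^2$ are only known up to errors of size $t^{1-\tau}$, which diverge. This is presumably why the statement is an inequality: the approach is to drop a nonnegative term rather than compute exactly. Concretely, I would use the standard trick of averaging. By monotonicity, $F$ is nondecreasing, so $\lim F\le \liminf \frac{1}{T}\int_T^{2T}F$. I would write this average, via the coarea formula, as a bulk integral over the annulus $\{1-1/T<u<1-1/(2T)\}$. In that form, integration by parts moves one derivative onto the test weight, and the $O(|x|^{-3-\tau})$ errors integrated over an annulus of volume $\sim T^3$ give $O(T^{-\tau})\to0$. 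The terms involving $\mathrm{R}+2\,\mathrm{div}X$, which is $L^1$ by (a), are handled by the same argument that proves existence of $m_X$. A Cauchy--Schwarz step on $\int|\nabla u|^2$ against $(\int|\nabla u|)^2/|\Sigma_t|$ produces the inequality sign, yielding $\lim F\le 8\pi m_X/B$.
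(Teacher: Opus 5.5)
There is a genuine gap. Your plan never identifies a term with a sign that absorbs the part of $F$ you cannot compute, and neither averaging in $t$ nor Cauchy--Schwarz does this job.

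\textbf{Averaging does not help.} You correctly note that \eqref{eq18}--\eqref{eq20} fix the order-$t$ pieces of $F$ only up to $O(t^{1-\tau})$. But these are relative errors of size $|x|^{-\tau}$ in quantities of size $t$, not $O(|x|^{-3-\tau})$ integrands over a volume $T^3$, and averaging leaves them untouched. Concretely, \eqref{exprH} and \eqref{Fwide2'} give, for $t$ large, $F(t)=t\,(4\pi-\phi(t))+t^2\phi'(t)$ with $\phi(t)=t^2\int_{\Sigma_t}|\nabla u|^2\,d\mathcal{H}^2$. After integrating by parts, $\frac1T\int_T^{2T}F\,dt$ is a sum of boundary and bulk terms of size $T\sup_{[T,2T]}|\phi-4\pi|$. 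The expansions only give $\phi-4\pi=O(t^{-\tau})$, so the average is $O(T^{1-\tau})$, which still diverges. In fact monotonicity forces $\lim F=2\lim t\,(4\pi-\phi(t))$, so knowing the $1/t$ coefficient of $\phi$ is equivalent to knowing the very limit you want to bound.

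\textbf{Cauchy--Schwarz does not help either.} As you wrote $F$, the term $t^3\int|\nabla u|^2$ enters with a plus sign, so $\int|\nabla u|^2\geqslant(\int|\nabla u|)^2/|\Sigma_t|$ bounds $F$ from below. In the form $2t(4\pi-\phi)$, where the sign would be right, it only trades the unknown $1/t$ term of $\phi$ for an unknown $O(t)$ correction to $|\Sigma_t|$. But \eqref{eq22} determines $|\Sigma_t|$ only up to $O(t^{2-\tau})$. Your appeal to $\mathrm{R}+2\,\mathrm{div}(X)\in L^1$ is also not attached to any identity in the plan.

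\textbf{The missing idea} is the paper's decomposition
\[
F(t)=\frac t4\Big(16\pi-\int_{\Sigma_t}\mathrm{H}^2\,d\mathcal{H}^2+4\int_{\Sigma_t}\frac{g(X,\nabla u)}{1-u}\,d\mathcal{H}^2\Big)+\frac t4\int_{\Sigma_t}\Big(\frac{2|\nabla u|}{1-u}-\mathrm{H}\Big)^2 d\mathcal{H}^2 .
\]
\begin{itemize}
\item The square term is $O(t^{1-2\tau})$ from pointwise expansions alone. Both $2|\nabla u|/(1-u)$ and $\mathrm{H}$ equal $2/|x|+O(|x|^{-1-\tau})$, and $|\Sigma_t|=O(t^2)$.
\item In the Hawking-type term, one expands $\mathrm{H}^2\,d\mathcal{H}^2$ against the Euclidean $\overline{\mathrm{H}}^2\,d\overline{\mathcal{H}}^2$ of the \emph{same} surface. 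The terms linear in $g-e$ give an exact tangential divergence plus the ADM flux integrand. The remainders are quadratic, $O(|x|^{-2-2\tau})$, so they contribute only $O(t^{1-2\tau})$.
\item The entire uncomputable order-$t$ part is then $\frac t4\big(16\pi-\int_{\Sigma_t}\overline{\mathrm{H}}^2\,d\overline{\mathcal{H}}^2\big)$. This is $\leqslant 0$ by the Euclidean Willmore inequality, and that sign is where the inequality in the lemma comes from.
\item The remaining flux of $\partial_jg_{ij}-\partial_ig_{jj}+2X^i$ through $\Sigma_t$ tends to $16\pi m_X$, which is where $\mathrm{R}+2\,\mathrm{div}(X)\in L^1$ enters.
\end{itemize}

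This decomposition also shows that your first heuristic was right and your later expectation was wrong. The first three terms of $F$ contribute no $X$-flux in the limit, and the whole $2X^i$ part of $m_X$ comes from $t^2\int g(X,\nabla u)$. Rewriting via \eqref{exprH} merely splits that flux in half and hides it.
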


\begin{proof}
In order to show the claim, we will proceed similarly to~\cite[Section 3]{Or_1}, but we will also use some results contained in~\cite[Section 2]{Miao2022}. The reason lies in the fact that we need to obtain area estimates on the level sets $\{u=1-(1/t)\}$, for $t\in (0,+\infty)$ sufficiently large, in a different way with respect to~\cite[Section 3]{Or_1}, where we used the divergence form of the equation satisfied by the function $u$. We start by observing that there is a $T_0\in (0,+\infty)$ such that the set $\{u \geqslant 1-(1/T_0)\}$ is contained in $U$, where $\big(U,(x^1,x^2,x^3)\big)$ is an asymptotically flat coordinate chart of $(M,g)$ that we fix from now on. Possibly passing to a larger $T_0$, by identity~\eqref{eq17}, the set $\{u \geqslant 1-(1/T_0)\}$ does not contain critical points of $u$. Now, we notice that all the level sets $\{u=1-(1/t)\}$, with $t\in [T_0,+\infty)$, are connected. Indeed, since $M$ has a single end (being asymptotically flat), every open set $\{u>1-(1/t)\}$, with $t\in\mathcal{T}$, is connected. The connectedness of these level sets then follows since the number of the connected components of $\{u=1-(1/t)\}$, with $t\in [T_0,+\infty)$, is the same as $\{u=1-(1/T_0)\}$, which is in turn equal to the number of the connected components of $\{u>1-(1/T_0)\}$. Moreover, by assumption~\eqref{eq1} and asymptotic expansion~\eqref{eq18}, we deduce
\begin{align}
\vert \nabla u\vert&=\frac{B}{\ \vert x\vert^{2}}+O\big(\vert x\vert^{-2-\tau}\big)\,, \label{eq19}\\
(\nabla du)_{ij}&=-\frac{{B}}{\vert x \vert^{3}}\bigg(\frac{3}{\vert x \vert^{2}}x^{i}x^{j}-\delta_{ij}\bigg)+O\big(\vert x\vert^{-3-\tau}\big)\,. \label{eq20}
\end{align}
Therefore, possibly passing to a larger $T_0$, the mean curvature $\mathrm{H}$ of $\{u=1-1/t \}$, for any $t\in [T_0,+\infty)$, which is computed with respect to the unit normal $\nu={\nabla u}/{\vert \nabla u \vert}$, satisfies
\begin{equation}\label{eqf48bis}
{\mathrm{H}}\,= \,\frac{2}{\vertl x\vertr}+O(\vert x\vert^{-1-\tau}) \,.
\end{equation}
This implies that the level sets $\{u=1-1/t \}$ are outer area--minimizing, as well as the coordinate spheres. By combining this fact with the estimate 
\begin{equation}\label{eq23}
\vert x\vert = {B} t+O(t^{1-\tau})
\end{equation}
on $\{u=1-1/t\}$ as $t\to +\infty$, we can conclude that 
\begin{equation}\label{eq22}
\mathrm{Area}\big(\{u=1-1/t \}\big)=4\pi {B}^2 t^2+O(t^{2-\tau})
\end{equation}
(see~\cite[pages 357--358]{Miao2022}).\\
We now use this area estimate to show that the last term in the following equivalent form of the function $F$,
\begin{equation}\label{eq21}
F(t)=\frac{t}{4}\bigg(16\pi \ - \!\!\!\int\limits_{\{u=1-\frac{1}{t}\}}\!\!\!\mathrm{H}^{2}\,d\mathcal{H}^2 +4\!\!\int\limits_{\{u=1-\frac{1}{t}\}}\!\!\!\!\! \frac{\ g(X,\nabla u)\ }{(1-u)} \,d\mathcal{H}^2\bigg)+\frac{t}{4}\!\!\int\limits_{\{u=1-\frac{1}{t}\}}\!\!\!\bigg(\frac{2\vert \nabla u \vert}{1-u}\,-\,\mathrm{H}\bigg)^{\!2} \,d\mathcal{H}^2\,,
\end{equation}
converges to zero as $t\to+\infty$. Indeed, estimates~\eqref{eq18},~\eqref{eq19} and~\eqref{eqf48bis}, yield
$$
\frac{2\vert \nabla u \vert}{1-u}\,-\,\mathrm{H}=O(\vert x\vert^{-1-\tau})\,.
$$
By combining this and equality~\eqref{eq23}, from formula~\eqref{eq22} we then have that the integral of the last term in equation~\eqref{eq21} is of the form $O(t^{-2\tau})$, hence it goes to zero, as $t\to+\infty$, since $\tau>1/2$.\\
We now set
\begin{equation}
{XM}(t)=\frac{t}{4}\bigg(16\pi \ - \!\!\!\int\limits_{\{u=1-\frac{1}{t}\}}\!\!\!\mathrm{H}^{2}\,d\mathcal{H}^2 +4\!\!\int\limits_{\{u=1-\frac{1}{t}\}}\!\!\!\!\! \frac{\ g(X,\nabla u)\ }{(1-u)} \,d\mathcal{H}^2\bigg)
\end{equation}
and we analyze its behavior at infinity. To this aim, it is convenient to use either an overbar or a subscript $e$ whenever a quantity is computed with respect to the Euclidean metric $e$ in the asymptotically flat coordinate chart $\big(U,(x^1,x^2,x^3)\big)$, that is, $e=\delta_{ij}dx^i\otimes dx^j$. Conversely, we agree that quantities computed with respect to the metric $g$ will be denoted without any additional symbol. Moreover, the covariant derivative with respect to $g$ will be denoted by $\na$, whereas the symbol $\D$ will indicate the Euclidean covariant derivative. Finally, we denote by $\gamma$ the $(0,2)$--symmetric tensor given by 
$$
g-e=(g_{ij}-\delta_{ij})\,dx^i\otimes dx^j
$$
and we observe that, according to formula~\eqref{eq1}, there holds
\begin{equation}
g^{ij} \,= \,\delta^{ij}-\gamma^{ij}+\, O(|x|^{-2\tau}) \,\label{exp2} \,,
\end{equation}
where $\gamma^{ij} = \delta^{i \ell} \delta^{k j} \gamma_{k \ell}$.
Our first task is to obtain an expansion for the mean curvature $\HHH$ of $\Sigma_t=\{u=1-1/t\}$ in terms of its Euclidean mean curvature $\overline{\HHH}$. 
We start by observing that the unit normal vector $\nu=\na u/|\na u|$ is related to the Euclidean one $\overline{\nu} \,= \,\D u/|\D u|_e$ through the formula
\begin{equation}
\nu^i \,= \,\Big( 1 + \frac{\gamma(\overline{\nu},\overline{\nu})}{2}\Big) \,\overline{\nu}^i - \,\gamma^i_k\,\overline{\nu}^k \,+\, O(|x|^{-2\tau}) \,,
\end{equation}
where $\gamma^i_k = \delta^{i j} \gamma_{j k}$.
Since the mean curvatures are computed as
\begin{equation}
\HHH \,= \,\left( g^{ij} - \nu^i \nu^j \right) \,\frac{(\na du)_{ij}}{|\na u|} \qquad \hbox{and} \qquad \overline{\HHH} \,= \,\left( \delta^{ij} - \overline{\nu}^i \overline{\nu}^j \right)\frac{ (\D du)_{ij} }{|\D u|_e} \,, 
\end{equation}
respectively, by noticing that $|\D du|_e/ |\D u|_e= O (|x|^{-1})$ by expansion~\eqref{eq18} and setting $\eta^{i j} = \delta^{ij} - \overline{\nu}^i \overline{\nu}^j $, we then arrive at 
\begin{equation}\label{eqHH}
\HHH \,= \,\Big( 1 + \frac{\gamma(\overline{\nu},\overline{\nu})}{2}\Big) \,\overline{\HHH} - \eta^{ij} \Big( \pa_j g_{ik} - \frac{1}{2} \pa_k g_{ij} \Big) \overline{\nu}^k - \eta^{ik} \eta^{j \ell} \gamma_{k \ell} \frac{ (\D du)_{ij}}{|\D u|_e} + O(|x|^{-1-2\tau}) \,,
\end{equation}
which implies 
$$
\HHH^2 \,= \,\big( 1 + {\gamma(\overline{\nu},\overline{\nu})} \big) \,\overline{\HHH}^2 - 2 \overline{\HHH} \,\eta^{ij} \Big( \pa_j g_{ik} - \frac{1}{2} \pa_k g_{ij} \Big) \overline{\nu}^k - 2\overline{\HHH} \,\eta^{ik} \eta^{j \ell} \gamma_{k \ell} \frac{ (\D du)_{ij}}{|\D u|_e} + O(|x|^{-2-2\tau}) \,.
$$
As the metric induced on $\Sigma_t$ by $g$ can be written as $g - \frac{du \otimes du}{|\na u|^2}$, the area element can be expressed as 
\begin{equation}
d\mathcal{H}^2 \,= \,\Big[1+\frac{1}{2}\,\eta^{ij}\gamma_{ij}+ O(\vert \xx x \xx \vert^{-2\tau})\Big] \,d\overline{\mathcal{H}}^2\,. \label{exp4} 
\end{equation}
Putting all together, the ``Willmore energy integrand'' then satisfies
\begin{align}
\HHH^2 \,d\mathcal{H}^2\,= \,\Big[\Big( 1&\, + \gamma(\overline{\nu},\overline{\nu}) + \frac{\eta^{ij}\gamma_{ij}}{2} \Big) \,\overline{\HHH}^2 - \,2 \overline{\HHH} \,\eta^{ij} \Big( \pa_j g_{ik} - \frac{1}{2} \pa_k g_{ij} \Big) \overline{\nu}^k\\
&\, - \,2\overline{\HHH}\,\eta^{ik} \eta^{j \ell} \gamma_{k \ell} \frac{ (\D du)_{ij}}{|\D u|_e} + O(|x|^{-2-2\tau}) \Big] \,d\overline{\mathcal{H}}^2\,.
\quad\qquad \label{eq:will}
\end{align}
Now, by means of formula~\eqref{eq18} again, we have
\begin{align}
\overline{\nu}^i&=\frac{x^i}{\vert x\vert}+O(|x|^{-\tau})\,,\label{eq24}\\
\frac{ (\D du)_{ij}}{|\D u|_e}& \,= \,\frac{1}{\vertl x\vertr}\Big(\delta_{ij}-3\,\delta_{ik} \delta_{j \ell}{\overline{\nu}}^{k}\,{\overline{\nu}^{\ell}}+ O(|x|^{-\tau})\,\Big)\,,\label{eqf47}
 \end{align}
which imply
\begin{equation}\label{eqf48}
\overline{{\mathrm{H}}}\,= \,\frac{2}{\vertl x\vertr}\bigl(1+O(|x|^{-\tau})\bigr)
\end{equation}
and, in turn, 
\begin{equation}\label{eql49}
2\overline{\HHH} \,\eta^{ik} \eta^{j \ell} \gamma_{k \ell} \frac{ (\D du)_{ij}}{|\D u|_e} \,= \,\frac{4}{|x|^2} \eta^{k \ell} \gamma_{k \ell} +O(|x|^{-2-\tau})\,.
\end{equation}
Plugging these two last equalities in formula~\eqref{eq:will}, we obtain
\begin{equation}
\HHH^2 \,d\mathcal{H}^2\ = \ \left[ \,\overline{\HHH}^2 \,+ \,\frac{4}{|x|^2} \gamma(\overline{\nu} ,\overline{\nu}) \,- \,\frac{2}{|x|^2} \eta^{i j} \gamma_{i j} \,- \,\frac{4}{|x|} \eta^{ij} \Big( \pa_j g_{ik} - \frac{1}{2} \pa_k g_{ij} \Big) \overline{\nu}^k + O(|x|^{-2-2\tau}) \right] \,d\overline{\mathcal{H}}^2 \,.
\end{equation}
To proceed, we now claim that 
\begin{equation}
\label{claimdiv}
\frac{4}{|x|^2} \gamma(\overline{\nu} , \overline{\nu}) \,- \,\frac{2}{|x|^2} \eta^{i j} \gamma_{i j} \,= \,\frac{2}{|x|} \bigl( \eta^{ij} \pa_i g_{jk} \overline{\nu}^k - \,\overline{\mathrm{div}}_{\Sigma_{t}} \omega^\top\,\bigr) \,+ \,O(|x|^{-2-2\tau})\,,
\end{equation}
where $\omega$ is the differential $1$--form defined by $\omega = \gamma_{jk} \overline{\nu}^k {d} x^j$ and $\omega^\top$ denotes its tangential component. To prove the claim, let us first observe that
$$
\omega = \omega^\top +\omega(\overline{\nu})\frac{du}{\ \vert \D u\vert_e} = \omega^\top + \gamma(\overline{\nu},\overline{\nu})\frac{du}{\ \vert \D u\vert_e}\qquad\text{ and }\qquad
\pa_i \overline{\nu}^k \,= \,\eta^{k \ell} \frac{ (\D du)_{i\ell}}{|\D u|_e} \,.
$$
By means of expansions~\eqref{eqf48} and~\eqref{eql49}, we then observe that
\begin{align}
\eta^{ij} \pa_i g_{jk} \overline{\nu}^k =\eta^{ij} \pa_i \gamma_{jk} \overline{\nu}^k & = \,\eta^{ij} \pa_i \omega_j \,- \,\eta^{ik} \eta^{j \ell} \gamma_{k \ell} \frac{ (\D du)_{ij}}{|\D u|_e}\\
& = \,\overline{\mathrm{div}}\,\omega \,- \,\pa_i \omega_j \overline{\nu}^i \overline{\nu}^j \,- \,\frac{1}{|x|} \eta^{ij} \gamma_{ij} + O(|x|^{-2-2\tau})\\
& = \,\overline{\mathrm{div}}_{\Sigma_t} \omega^\top \,+ \,\gamma(\overline{\nu},\overline{\nu}) \overline{\HHH} - \,\frac{1}{|x|} \eta^{ij} \gamma_{ij} + O(|x|^{-2-2\tau})\\
& = \,\overline{\mathrm{div}}_{\Sigma_t}\omega^\top \,+ \,\frac{2}{|x|}\gamma(\overline{\nu},\overline{\nu}) - \,\frac{1}{|x|} \eta^{ij} \gamma_{ij} +O(|x|^{-2-2\tau})\,,
\end{align}
which yields formula~\eqref{claimdiv}. As a consequence,
\begin{align}
\HHH^2 \,d\mathcal{H}^2=& \left[ \,\overline{\HHH}^2\! - \frac{2}{|x|}\, \overline{\mathrm{div}}_{\Sigma_{t}} \omega^\top\!\! + \frac{2}{|x|} \eta^{ij} \pa_i g_{jk} \overline{\nu}^k\! - \frac{4}{|x|} \eta^{ij} \pa_j g_{ik}\overline{\nu}^k\! + \frac{2}{|x|}\eta^{ij} \pa_k g_{ij} \overline{\nu}^k+ O(|x|^{-2-2\tau}) \right] \,d\overline{\mathcal{H}}^2\\
= \ & \left[ \,\overline{\HHH}^2 - \frac{2}{|x|}\, \overline{\mathrm{div}}_{\Sigma_{t}} \omega^\top - \frac{2}{|x|} \delta^{ij} \big( \pa_i g_{jk} - \pa_k g_{ij} \big) \overline{\nu}^k + O(|x|^{-2-2\tau}) \right] \,d\overline{\mathcal{H}}^2 \,. \label{eq:nice}
\end{align}
In view of equality~\eqref{eq23}, formula~\eqref{eq:nice} becomes
\begin{equation}
\HHH^2 \,d\mathcal{H}^2\ = \ \left[ \,\overline{\HHH}^2 - \frac{2}{{B}t}\,\overline{\mathrm{div}}_{\Sigma_{t}} \omega^\top - \frac{2}{{B}t} \delta^{ij} \big( \pa_i g_{jk} - \pa_k g_{ij} \big) \overline{\nu}^k + O(t^{-2-2\tau}) \right] \,d\overline{\mathcal{H}}^2 \quad \text{on $\Sigma_t$\,.}\label{eq:verynice}
\end{equation}
Now, we observe that
\begin{equation}
\frac{\ g(X,\nabla u)\ }{(1-u)} \,d\mathcal{H}^2=\bigg[\delta_{ij}X^i \frac{x^j}{\vert x\vert^2}+O(|x|^{-2-2\tau})\bigg] \,d\overline{\mathcal{H}}^2=\bigg[\,\frac{1}{|x|}\delta_{ij}X^i \overline{\nu}^j+O(|x|^{-2-2\tau})\bigg] \,d\overline{\mathcal{H}}^2 \,.
\end{equation}
as a consequence of estimates~\eqref{eq18},~\eqref{exp4},~\eqref{eq24} and the fact that $X=O(\vert x\vert^{-\tau-1})$. Similarly as before, equality~\eqref{eq23} then gives
\begin{equation}
\frac{\ 4g(X,\nabla u)\ }{(1-u)} \,d\mathcal{H}^2=\bigg[\,\frac{4}{{B}t}\delta_{ij}X^i \overline{\nu}^j+O(t^{-2-2\tau})\bigg] \,d\overline{\mathcal{H}}^2 \quad \text{on $\Sigma_t$\,.}\label{eq:verynicebis}
\end{equation}
By putting equality~\eqref{eq22} in the above identities~\eqref{eq:verynice} and~\eqref{eq:verynicebis}, we obtain
\begin{align*}
{XM}(t)=&\,\frac{t}{4}\bigg(16\pi \ - \!\int\limits_{\Sigma_t}\mathrm{H}^{2}\,d\mathcal{H}^2 +4 \int\limits_{\Sigma_t}\frac{\ g(X,\nabla u)\ }{(1-u)} \,d\mathcal{H}^2\bigg)\\
=&\,\frac{t}{4}\,\xx\,\bigg( 16\xx\pi \ - \!\int\limits_{\Sigma_t}\overline{\mathrm{H}}^{2} \,d\overline{\mathcal{H}}^2 \bigg) + \,\frac{1}{2{B}} \int\limits_{\Sigma_t} \overline{\mathrm{div}}_{\Sigma_{t}}\omega^\top\,d\overline{\mathcal{H}}^2\\
&\,+ \,\frac{1}{2{B}}\Bigg( \,\int\limits_{\Sigma_t} 
\delta^{ij} \big( \pa_i g_{jk} - \pa_k g_{ij} \big) \overline{\nu}^k \,d\overline{\mathcal{H}}^2 +\int\limits_{\Sigma_t} 2\delta_{ij}X^i \overline{\nu}^j\,d\overline{\mathcal{H}}^2\Bigg)+ O(t^{1-2\tau}) \,.
\end{align*}
The first summand is nonpositive by the {\em Euclidean Willmore inequality} (see~\cite{Will}), the second summand vanishes by the divergence theorem and the third summand tends to $8 \pi m_{X}/{B}$, as $t \to + \infty$ (the last term goes to zero). In conclusion, we have
\begin{equation}
\lim_{t \to + \infty} F(t) \, \leqslant \,\limsup_{t\to +\infty}\, XM(t)\, \leqslant \,8\pi m_X/{B}\,.
\end{equation}
\end{proof}

\begin{remark}
If $g_{ij}=\delta_{ij} + O_{3}(\vert x \vert ^{-\tau})$ and $X^i=O_2(\vert x \vert ^{-1-\tau})$ in an asymptotically flat coordinate chart $\big(U,(x^1,x^2,x^3)\big)$, then it turns out that the function $u$ satisfies
\begin{equation}
u=1-\frac{\ {B}\ }{\vert x\vert}+O_{3}(\vert x\vert^{-1-\tau})\,.
\end{equation}
Then, by the argument of the proof of Theorem~2.1 in~\cite{Miao2022}, the inequality~\eqref{limtoinfinityF} can be shown to be to an equality.
\end{remark}

By Lemmas~\ref{Lemmalimittozero} and~\ref{lemM}, from inequality~\eqref{consequencemonotonicityformula}, we conclude that
$m_X \geqslant 0$, as ${B}>0$, hence the first part of Theorem~\ref{GenPMT} is proved. The rigidity part is a direct consequence of Proposition~\ref{Constantcase}.

\begin{remark}
Theorem~\ref{GenPMT} holds under slightly weaker assumptions on the decay of the metric $g$ and of the vector field $X$. More precisely, it is still true provided that the metric $g$ belongs to $C^{1,\alpha}_{-\tau}(M)$ and the vector field $X$ lies in $C^{0,\alpha_0}_{-1-\tau_0}(M)$, for some $\alpha,\alpha_0\in (0,1)$ and $\tau, \tau_0>1/2$.\\
We recall that a smooth function $u$ on $M$ lies in the {\em weighted H\"{o}lder space} $C^{k,\alpha}_{s}(M)$, if the {\em weighted norm} 
$$
\Vert u\Vert_{C^{k,\alpha}_{s}(M)}=\sum_{i=1}^{k} \sup\limits_{M} \vert r^{i-s}\nabla^i u\vert+\sup\limits_{x\in M}\Bigg( r^{k+\alpha-s}\!\! \sup_{\substack{y,z\in B_{r/2}(x)\\x\neq y}} \frac{\vert \nabla^k u (y)-\nabla^{k} u (z)\vert}{d(y,z)^\alpha}\Bigg)
$$
is finite, where $r$ is a smooth positive function on $M$ such that $r = \vert x\vert$ in an asymptotically flat coordinate chart and the quantity $\vert \nabla^k u (y)-\nabla^{k} u (z)\vert$ can be defined by using parallel translation along
a minimizing geodesic connecting $y$ to $z$. If $E$ is a smooth vector bundle over $M$ equipped with a bundle metric and connection, then the spaces of sections $C^{k,\alpha}_{s}(E)$ are defined analogously.
\end{remark}

\begin{remark}
Under the assumption $(2)$ in Theorem~\ref{GenPMT}, if we also assume that $X$ is a divergence free vector field on $M$ and that $m_X=0$, then $(M,g)$ is isometric to $(\R^3, g_{{\mathrm{eucl}}})$. Indeed, by the proof of Proposition~\ref{Constantcase}, we know that $X$ is the gradient of the function
$$f=-\log\left(\frac{\vert \nabla u\vert^2}{(1-u)^4}\right)\,,$$
which is smooth on all of $M$ (notice that it converges at infinity by formula~\eqref{eq18}). Therefore, the assumption that $X$ has zero divergence implies that the function $f$, in addition to being bounded, is also harmonic on the whole $M$, hence, by the maximum principle, it is constant. Consequently, $X$ vanishes everywhere and the rest of the claim follows as point $(a)$ in Proposition~\ref{Constantcase}.
\end{remark}

\begin{remark}
An analogue of Theorem~\ref{GenPMT} holds for manifolds with boundary. Let $(M,g)$ be a connected, orientable, complete, asymptotically flat $3$--manifold with a compact and connected boundary and let $X$ be an admissible smooth vector field on $M$. Assume that $\mathrm{R}+2\,\mathrm{div}(X)\in L^{1}(M,g)$, $\,\mathrm{R}_X^{(k)}=\mathrm{R}+2\,\mathrm{div}(X)-(1+1/k)\, \vert X\vert^2\, \geqslant\,0$ for some $k\in \R\setminus (-2,0]$, $\,H_2(M, \partial M;\Z)=0$, then,
\begin{equation}\label{GenPMI}
16\pi -\int\limits_{\partial M} \big[\mathrm{H}-g(X,\nu)\big]^2\, d\mathcal{H}^2 \geqslant 0\,\,\qquad\implies\,\,\qquad m_{X} \geqslant 0\,.
\end{equation}
Furthermore, the following statements are true.
\begin{enumerate}
\item If $\mathrm{R}_X^{(k)} \geqslant 0$ with $k\in \R\setminus [-2,0]$ and $m_X=0$, then $X$ vanishes on $M$ and $(M,g)$ is isometric to $(\R^3,g_{\mathrm{eucl}})$ minus an open ball.
\item If $\mathrm{R}_X^{(-2)} \geqslant 0$ and $m_X=0$, then $X$ is a gradient of a smooth function and $(M,g)$ is conformally isometric to $(\R^3,g_{\mathrm{eucl}})$ minus an open ball.
\end{enumerate}
The proof follows in the same way as the one of Theorem~\ref{GenPMT}. In this case, we consider the function $u$ given by 
$$
\mathcal{L}_Xu=0\,\,\,\text{on $M$,}\quad\quad u=0\,\,\,\text{at $\partial M$}\qquad\text{ and } \qquad u\to1\,\,\, \text{at infinity.}
$$
Therefore, the function $F$, given by formula~\eqref{eq0}, is defined in the interval $[1,+\infty)$. It is nondecreasing in the set $\mathcal{T}$ (see formula~\eqref{mathcalT}) and, comparing its value at $t=1\in \mathcal{T}$ with its limit as $t\to +\infty$, we find that there exists a positive constant ${B}$ such that 
\begin{align}
8\pi m_X/{B}\,&\geqslant\, 4\pi\, +\, \int\limits_{\partial M} \vert \nabla u \vert^{2} \ d\mathcal{H}^2 \,- \,\int\limits_{\partial M}\vert \nabla u \vert\, \mathrm{H} \ d\mathcal{H}^2 
 \, + \, \int\limits_{\partial M} g(X,\nabla u) \ d\mathcal{H}^2\\
 &=\,4\pi\, +\, \int\limits_{\partial M} \left[\,\vert \nabla u \vert\,-\,\frac{\mathrm{H}}{2}\,+\,\frac{1}{2}\,g\left(X,\frac{\nabla u }{\vert \nabla u \vert}\right)\right]^2 d\mathcal{H}^2 \,- \,\frac{1}{4}\int\limits_{\partial M}\,\left[\,\mathrm{H}\,-\,g\left(X,\frac{\nabla u }{\vert \nabla u \vert}\right)\right]^2 d\mathcal{H}^2\\
 &\geqslant\,\frac{1}{4}\,\left[16\pi- \,\int\limits_{\partial M}\!\left[\,\mathrm{H}\,-\,g\left(X,\nu\right)\right]^2 d\mathcal{H}^2\right]\,.
\end{align}
Then, we obtain the conclusion as for Theorem~\ref{GenPMT}.
\end{remark}

\appendix

\section{Proof of the existence of the function \texorpdfstring{$w$}{} in Step~1 of Subsection~\ref{subsecexistenceandasymptoticnearthepole}}\label{proofconstofw}

On the punctured open ball $B_{r_o}(o)\setminus\{o\}$ we consider the function
\begin{align*}
&w=\frac{1}{\vert x\vert}+b_{i_1}^{(0)}\frac{x^{i_1}}{\vert x\vert}+\vert x\vert\Big(a^{(1)}+c_{i_1 i_2}^{(1)}\frac{x^{i_1} x^{i_2}}{\vert x\vert^2}+e_{i_1 \cdots i_4 }^{(1)}\frac{x^{i_1}\cdots x^{i_4}}{\vert x\vert^4}\Big)\\
&\quad\quad+\vert x\vert^2\Big(b_{i_1}^{(2)}\,\frac{x^{i_1}}{\vert x\vert}+d_{i_1 i_2 i_3 }^{(2)}\frac{x^{i_1}x^{i_2} x^{i_3} }{\vert x\vert^3}+f_{i_1\cdots i_5 }^{(2)}\frac{x^{i_1}\cdots x^{i_5}}{\vert x\vert^5}\Big)\\
&\quad\quad +\vert x\vert^3\Big(a^{(3)}+c_{i_1 i_2}^{(3)}\frac{x^{i_1} x^{i_2}}{\vert x\vert^2}+e_{i_1\cdots i_4 }^{(3)}\frac{x^{i_1}\cdots x^{i_4}}{\vert x\vert^4}+h_{i_1 \cdots i_6}^{(3)}\frac{x^{i_1}\cdots x^{i_6}}{\vert x\vert^6}
+l_{i_1\cdots i_8}^{(3)}\frac{x^{i_1}\cdots x^{i_8}}{\vert x\vert^8}\Big)\,.
\end{align*}
Let us determine the constants $b_{i_1}^{(0)}$, $a^{(1)}$, $c_{i_1 i_2}^{(1)}$, $e_{i_1 i_2 i_3 i_4 }^{(1)}$, $b_{i_1}^{(2)}$, $d_{i_1 i_2 i_3 }^{(2)}$, $f_{i_1 i_2 i_3 i_4 i_5 }^{(2)}$, $a^{(3)}$, $c_{i_1 i_2}^{(3)}$, $e_{i_1 i_2 i_3 i_4 }^{(3)}$, $h_{i_1 i_2 i_3 i_4 i_5 i_6}^{(3)}$, $l_{i_1 i_2 i_3 i_4 i_5 i_6 i_7 i_8}^{(3)}$ so that the function $w$ satisfies 
\begin{equation}
\mathcal{L}_X w=h\quad \text{ on} \quad B_{r_o}(o)\setminus\{o\} \,,
\end{equation}
where $h$ is a smooth function in the punctured open ball $B_{r_o}(o)\setminus\{o\}$ that admits a $C^1$--extension on $\overline{B}_{r_o}(o)$, still denoted by $h$. 
We start by observing that
\begin{align*}
\partial_{x^k}\frac{ x^{i_1}}{\vert x\vert}&=\frac{1}{\vert x\vert}\Big[\delta_k^{i_1}-\frac{x^{i_1}x^k}{\vert x\vert^2}\Big],\\
\partial_{x^k}\frac{x^{i_1} x^{i_2}}{\vert x\vert^2}&=\frac{1}{\vert x\vert}\Big[\delta_k^{i_1}\frac{ x^{i_2}}{\vert x\vert}+\delta_k^{i_2}\frac{ x^{i_1}}{\vert x\vert} -2\frac{x^{i_1}x^{i_2}x^k}{\vert x\vert^3}\Big],\\
&\,\cdots\cdot\\
\partial_{x^k}\frac{x^{i_1}x^{i_2} \cdots x^{i_n}}{\vert x\vert^n}&=\frac{1}{\vert x\vert}\Big[\delta_k^{i_1}\frac{x^{i_2} x^{i_3} \cdots x^{i_n}}{\vert x\vert^{n-1}}+\delta_k^{i_2}\frac{x^{i_1} x^{i_3} \cdots x^{i_n}}{\vert x\vert^{n-1}}+\cdots\\
&\qquad\qquad\,\cdots+\delta_k^{i_n}\frac{x^{i_1} x^{i_2} \cdots x^{i_{n-1}}}{\vert x\vert^{n-1}}-n\frac{x^{i_1}x^{i_2}\cdots x^{i_n}x^k}{\vert x\vert^{n+1}}\Big],
\end{align*}
which leads to the following equalities
\begin{align*}
\partial_{x^l}\partial_{x^k}\frac{x^{i_1}x^{i_2} \cdots x^{i_n}}{\vert x\vert^n}&=\frac{1}{\vert x\vert^2}\bigg[\sum_{\substack{r,s =1 \\ r\neq s}}^{n}\delta_k^{i_r} \delta_l^{i_s} \frac{x^{i_1}\cdots\widehat{x^{i_r}} \cdots\widehat{x^{i_s}}\cdots x^{i_n}}{\vert x\vert^{n-2}}\\
&\quad\quad \quad \quad-n \sum_{r=1}^n\delta_k^{i_r} \frac{x^{i_1}\cdots\widehat{x^{i_r}} \cdots x^{i_n}x^l}{\vert x \vert^{n}} -n \sum_{s=1}^n\delta_l^{i_s} \frac{x^{i_1}\cdots\widehat{x^{i_s}} \cdots x^{i_n}x^k}{\vert x\vert^n}\\
&\quad\quad \quad \quad -n \delta_{kl}\frac{x^{i_1}x^{i_2} \cdots x^{i_n}}{\vert x\vert^n} +n(n+2)\frac{x^{i_1}x^{i_2} \cdots x^{i_n} x^k x^l}{\vert x\vert^{n+2}}\bigg],\\
\sum_{k=1}^3\partial_{x^k}\partial_{x^k}\frac{x^{i_1}x^{i_2} \cdots x^{i_n}}{\vert x\vert^n}&\!=\!\frac{1}{\vert x\vert^2}\bigg[\sum_{k=1}^3\sum_{\substack{r,s =1 \\ r\neq s}}^{n}\delta_k^{i_r} \delta_k^{i_s} \frac{x^{i_1}\cdots\widehat{x^{i_r}} \cdots\widehat{x^{i_s}}\cdots x^{i_n}}{\vert x\vert^{n-2}}\!-\!n(n+1)\frac{x^{i_1}x^{i_2} \cdots x^{i_n} }{\vert x\vert^{n}}\bigg].
\end{align*}
By expanding the functions $X^i$, $g^{ij}$ and $\Gamma_{ij}^k$ near the pole, by the previous equalities, we obtain
\begin{align*}
\mathcal{L}_X\Big(\frac{1}{\vert x\vert}\Big)&=\frac{1}{2\vert x\vert^2} X^{i_1}(o)\frac{x^{i_1}}{\vert x\vert}+\frac{1}{\vert x\vert}\Big[\widehat{A}_{i_1 i_2}\frac{x^{i_1}x^{i_2} }{\vert x\vert^2}+\widehat{B}_{i_1 \cdots i_4}\frac{x^{i_1}\cdots x^{i_4}}{\vert x\vert^4} \Big]\\
&\quad\,+\Big[\widehat{C}_{i_1 i_2 i_3}\frac{x^{i_1} x^{i_2}x^{i_3}}{\vert x\vert^3}+\widehat{D}_{i_1 \cdots i_5}\frac{x^{i_1}\cdots x^{i_5}}{\vert x\vert^5} \Big]\\
&\quad\, +\vert x\vert \Big[\widehat{E}_{i_1 \cdots i_4}\frac{x^{i_1}\cdots x^{i_4}}{\vert x\vert^4}+\widehat{F}_{i_1 \cdots i_6}\frac{x^{i_1}\cdots x^{i_6}}{\vert x\vert^6} \Big]\\
&\quad\,+ \text{a function that admits a $C^1$--extension on  $\overline{B}_{r_o}(o)$},
\end{align*}
where 
\begin{align*}
\widehat{A}_{i_1 i_2}&=\frac{1}{2}\Big[\,\partial_{x^{i_1}}\!X^{i_2}(o)+2\delta^{ij}\partial_{x^{i_1}} \!\Gamma^{i_2}_{ij}(o)-\delta_{ij}\partial_{x^{i_1}}\!\partial_{x^{i_2}}g^{ij}(o)\Big],\\
\widehat{B}_{i_1 \cdots i_4}&=\frac{3}{2}\partial_{x^{i_1}}\!\partial_{x^{i_2}}g^{i_3 i_4}(o)\,,\\
\widehat{C}_{i_1 i_2 i_3}&=\frac{1}{2}\Big[\,\frac{1}{2}\partial_{x^{i_1}}\!\partial_{x^{i_2}}X^{i_3}(o)+\delta^{ij}\partial_{x^{i_1}} \!\partial_{x^{i_2}}\!\Gamma^{i_3}_{ij}(o)-\frac{1}{3}\delta_{ij}\partial_{x^{i_1}}\!\partial_{x^{i_2}}\!\partial_{x^{i_3}}g^{ij}(o)\Big],\\
\widehat{D}_{i_1 \cdots i_5}&=\frac{1}{2}\partial_{x^{i_1}}\!\partial_{x^{i_2}}\!\partial_{x^{i_3}}g^{i_4 i_5}(o)\,,\\
\widehat{E}_{i_1 \cdots i_4}&=\frac{1}{2}\Big[\,\frac{1}{6}\partial_{x^{i_1}}\!\partial_{x^{i_2}}\!\partial_{x^{i_3}}X^{i_4}(o)+\frac{1}{3}\delta^{ij}\partial_{x^{i_1}} \!\partial_{x^{i_2}}\!\partial_{x^{i_3}}\!\Gamma^{i_4}_{ij}(o)+ \partial_{x^{i_1}}\!\partial_{x^{i_2}}g^{ij}(o)\,\partial_{x^{i_3}} \!\Gamma^{i_4}_{ij}(o)\\
&\qquad\ \ -\frac{1}{12}\delta_{ij}\partial_{x^{i_1}}\!\partial_{x^{i_2}}\!\partial_{x^{i_3}}\!\partial_{x^{i_4}}g^{ij}(o)\Big],\\
\widehat{F}_{i_1 \cdots i_6}&=\frac{1}{8}\partial_{x^{i_1}}\!\partial_{x^{i_2}}\!\partial_{x^{i_3}}\!\partial_{x^{i_4}}g^{i_5 i_6}(o)\,.
\end{align*}
\begin{align*}
\mathcal{L}_X\Big(b_{i_1}^{(0)}\frac{x^{i_1}}{\vert x\vert}\Big)&=-\frac{2}{\vert x\vert^2}b_{i_1}^{(0)}\frac{x^{i_1}}{\vert x\vert}-\frac{1}{2\vert x\vert}\Big[b_{i}^{(0)}X^i(o)-b_{i_1}^{(0)}X^{i_2}(o)\frac{x^{i_1}x^{i_2}}{\vert x\vert^2}\Big]\\
&\quad\,+\Big[ A^*_{i_1 }\frac{x^{i_1}}{\vert x\vert}+B^*_{i_1 i_2 i_3} \frac{x^{i_1} x^{i_2}x^{i_3}}{\vert x\vert^3}+C^*_{i_1 \cdots i_5}\frac{x^{i_1}\cdots x^{i_5}}{\vert x\vert^5}\Big]\\
&\quad\,+\vert x\vert\Big[ D^*_{i_1 i_2}\frac{x^{i_1}x^{i_2} }{\vert x\vert^2}+E^*_{i_1 \cdots i_4}\frac{x^{i_1}\cdots x^{i_4}}{\vert x\vert^4}+F^*_{i_1 \cdots i_6}\frac{x^{i_1}\cdots x^{i_6}}{\vert x\vert^6} \Big]\\
&\quad\,+ \text{a function that admits a $C^1$--extension on  $\overline{B}_{r_o}(o)$},
\end{align*}
where
\begin{align*}
A^*_{i_1 }&=-\Big[\,\frac{1}{2}b_i^{(0)}\partial_{x^{i_1}}\!X^{i}(o)+\delta^{ij}b_k^{(0)}\partial_{x^{i_1}} \!\Gamma^{k}_{ij}(o)\Big],\\
B^*_{i_1 i_2 i_3}&=\Big[\,\frac{1}{2}\partial_{x^{i_1}}\!X^{i_2}(o)+\delta^{ij}\partial_{x^{i_1}} \!\Gamma^{i_2}_{ij}(o)-\frac{1}{2}\delta_{ij}\partial_{x^{i_1}}\!\partial_{x^{i_2}}g^{ij}(o)\Big]b_{i_3}^{(0)}-b_i^{(0)}\partial_{x^{i_1}}\!\partial_{x^{i_2}}g^{i i_3}(o)\,,\\
C^*_{i_1 \cdots i_5}&=\frac{3}{2}\partial_{x^{i_1}}\!\partial_{x^{i_2}}g^{i_3 i_4}(o) b_{i_5}^{(0)}\,,\\
D^*_{i_1 i_2}&=-\frac{1}{2}\Big[\,\frac{1}{2}b_i^{(0)}\partial_{x^{i_1}} \!\partial_{x^{i_2}}X^i(o)+\delta^{ij}b_k^{(0)}\partial_{x^{i_1}} \!\partial_{x^{i_2}}\!\Gamma^{k}_{ij}(o) \Big],\\
E^*_{i_1 \cdots i_4}&=\Big[\, \frac{1}{4}\partial_{x^{i_1}}\!\partial_{x^{i_2}}X^{i_3}(o)+\frac{1}{2}\delta^{ij}\partial_{x^{i_1}} \!\partial_{x^{i_2}}\!\Gamma^{i_3}_{ij}(o)-\frac{1}{6}\delta_{ij}\partial_{x^{i_1}}\!\partial_{x^{i_2}}\!\partial_{x^{i_3}}g^{ij}(o)\Big]b_{i_4}^{(0)}\\
&\phantom{=\ }-\frac{1}{3}b_{i}^{(0)}\partial_{x^{i_1}}\!\partial_{x^{i_2}}\!\partial_{x^{i_3}}g^{i i_4}(o)\,,\\
F^*_{i_1 \cdots i_6}&=\frac{1}{2}\partial_{x^{i_1}}\!\partial_{x^{i_2}}\!\partial_{x^{i_3}}g^{i_4 i_5}(o)b_{i_6}^{(0)}\,.
\end{align*}
\begin{align*}
&\mathcal{L}_X \Big[\vert x\vert\Big(a^{(1)}+c_{i_1 i_2}^{(1)}\frac{x^{i_1} x^{i_2}}{\vert x\vert^2}+e_{i_1 i_2 i_3 i_4 }^{(1)}\frac{x^{i_1}x^{i_2} x^{i_3} x^{i_4}}{\vert x\vert^4}\Big) \Big] =\\
&\quad =\frac{1}{\vert x\vert}\Big[2\Big( a^{(1)}+\sum_{i=1}^{3}c_{i i}^{(1)} \Big)+\widetilde{A}_{i_1 i_2 }\frac{x^{i_1}x^{i_2}}{\vert x\vert^2}-18 e_{i_1 \cdots i_4 }^{(1)} \frac{x^{i_1} \cdots x^{i_4}}{\vert x\vert^4}\Big]\\
&\quad\quad\ -\frac{1}{2}\Big\{ \Big[a^{(1)}X^{i_1}(o)+X^i(o)\big( c_{ii_1}^{(1)}+c_{i_1i}^{(1)}\big)\Big]\frac{x^{i_1}}{\vert x\vert} +\widetilde{B}_{i_1 i_2 i_3 }\frac{x^{i_1}x^{i_2} x^{i_3}}{\vert x\vert^3}-3X^{i_1}(o) e_{i_2 \cdots i_5}^{(1)}\frac{x^{i_1}\cdots x^{i_5}}{\vert x\vert^5}\Big\}\\
&\quad\quad\ + \vert x\vert \Big[ \widetilde{C}_{i_1 i_2 } \frac{x^{i_1}x^{i_2}}{\vert x\vert^2} +\widetilde{D}_{i_1 \cdots i_4 }\frac{x^{i_1} \cdots x^{i_4}}{\vert x\vert^4}+\widetilde{E}_{i_1 \cdots i_6 }\frac{x^{i_1} \cdots x^{i_6}}{\vert x\vert^6}\\
&\qquad\qquad\quad+\frac{15}{2} \partial_{x^{i_1}}\!\partial_{x^{i_2}} g^{i_3 i_4}(o)e_{i_5 \cdots i_8}^{(1)}\frac{x^{i_1} \cdots x^{i_8}}{\vert x\vert^8} \Big]\\
&\quad\quad\ + \text{a function that admits a $C^1$--extension on  $\overline{B}_{r_o}(o)$},
\end{align*}
where 
\begin{align*}
\widetilde{A}_{i_1 i_2 }&=2\Big[-2c_{i_1 i_2}^{(1)}+e_{ ii i_1 i_2}^{(1)}+e_{ i i_1 i i_2}^{(1)}+ e_{ i i_1 i_2 i }^{(1)}+ e_{ i_1ii i_2}^{(1)}+e_{ i_1i i_2i}^{(1)}+e_{ i_1 i_2ii}^{(1)}\Big],\\
\widetilde{B}_{i_1 i_2 i_3 }&=- X^{i_1}(o)c_{i_2 i_3}^{(1)}+X^{i}(o)\big(e_{ i i_1 i_2 i_3}^{(1)}+e_{ i_1 i i_2 i_3}^{(1)}+e_{ i_1 i_2 i i_3}^{(1)}+e_{ i_1 i_2 i_3 i }^{(1)} \big),\\
\widetilde{C}_{i_1 i_2 }&=\frac{1}{2} \partial_{x^{i_1}}\!\partial_{x^{i_2}} g^{i j}(o)\big( \delta_{ij}a^{(1)}+2c_{ij}^{(1)}\big)-a^{(1)}\delta^{ij}\partial_{x^{i_1}} \!\Gamma^{i_2}_{ij}(o)-\delta^{ij}\partial_{x^{i_1}} \!\Gamma^{k}_{ij}(o)\big( c_{k i_2}^{(1)}+ c_{i_2 k}^{(1)}\big)\\
&\quad -\frac{1}{2} a^{(1)}\partial_{x^{i_1}} \!X^{i_2}(o)-\frac{1}{2}\partial_{x^{i_1}} \!X^{k}(o)\big( c_{k i_2}^{(1)}+ c_{i_2 k}^{(1)}\big),\\
\widetilde{D}_{i_1\cdots i_4}&=-\frac{1}{2}a^{(1)}\partial_{x^{i_1}}\!\partial_{x^{i_2}} g^{i_3 i_4}(o)-\partial_{x^{i_1}}\!\partial_{x^{i_2}} g^{i_3 i}(o)\big(c_{i i_4}^{(1)}+c_{i_4 i}^{(1)}\big)-\frac{1}{2}\delta_{ij} \partial_{x^{i_1}}\!\partial_{x^{i_2}} g^{i j}(o)c_{i_3 i_4}^{(1)}\\
&\quad +\partial_{x^{i_1}}\!\partial_{x^{i_2}} g^{i j}(o) \big(e_{ i j i_3 i_4}^{(1)}+e_{ i i_3 j i_4}^{(1)}+e_{ i i_3 i_4 j }^{(1)} +e_{ i_3 ij i_4}^{(1)} +e_{ i_3 i i_4 j }^{(1)}+e_{ i_3 i_4i j }^{(1)} \big)\\
&\quad +\delta^{ij}\partial_{x^{i_1}} \!\Gamma^{i_2}_{ij}(o)c_{i_3 i_4}^{(1)}-\delta^{ij}\partial_{x^{i_1}} \!\Gamma^{k}_{ij}(o)\big(e_{ k i_2 i_3 i_4}^{(1)}+e_{ i_2 k i_3 i_4}^{(1)}+ e_{ i_2 i_3 k i_4}^{(1)}+ e_{ i_2 i_3 i_4 k}^{(1)}\big)\\
&\quad +\frac{1}{2}\partial_{x^{i_1}} \!X^{i_2}(o)c_{i_3 i_4}^{(1)}-\frac{1}{2}\partial_{x^{i_1}} \!X^{k}(o)\big(e_{ k i_2 i_3 i_4}^{(1)}+e_{ i_2 k i_3 i_4}^{(1)}+ e_{ i_2 i_3 k i_4}^{(1)}+ e_{ i_2 i_3 i_4 k}^{(1)}\big),\\
\widetilde{E}_{i_1\cdots i_6}&=3\Big[\,\frac{1}{2}\partial_{x^{i_1}}\!\partial_{x^{i_2}} g^{i_3 i_4}(o)c_{i_5 i_6}^{(1)}-\partial_{x^{i_1}}\!\partial_{x^{i_2}} g^{ii_3}(o)\big(e_{ i i_4 i_5 i_6}^{(1)}+e_{ i_4 i i_5 i_6}^{(1)}+ e_{ i_4 i_5 i i_6}^{(1)}+ e_{ i_4 i_5 i_6 i}^{(1)}\big)\\
&\quad\quad\,-\frac{1}{2} \delta_{ij} \partial_{x^{i_1}}\!\partial_{x^{i_2}} g^{i j}(o)e_{i_3 \cdots i_6 }^{(1)}+ \delta_{ij}\partial_{x^{i_1}} \!\Gamma^{i_2}_{ij}(o)e_{i_3 \cdots i_6 }^{(1)}+\frac{1}{2}\partial_{x^{i_1}} \!X^{i_2}(o)e_{i_3 \cdots i_6 }^{(1)}\Big].
\end{align*}
\begin{align*}
&\mathcal{L}_X \Big[\vert x\vert^2\Big(b_{i_1}^{(2)}\,\frac{x^{i_1}}{\vert x\vert}+d_{i_1 i_2 i_3 }^{(2)}\frac{x^{i_1}x^{i_2} x^{i_3} }{\vert x\vert^3}+f_{i_1 i_2 i_3 i_4 i_5 }^{(2)}\frac{x^{i_1}x^{i_2} x^{i_3} x^{i_4} x^{i_5}}{\vert x\vert^5}\Big)\Big]=\\
&\quad =\Big[A^{\star}_{i_1 } \frac{x^{i_1}}{\vert x\vert}+B^{\star}_{i_1 i_2 i_3}\frac{x^{i_1} x^{i_2} x^{i_3}}{\vert x\vert^3}-24 f_{i_1 \cdots i_5 }^{(2)}\frac{x^{i_1} \cdots x^{i_5}}{\vert x\vert^5}\Big]\\
&\quad\quad+\vert x\vert\Big[ -\frac{1}{2}X^i(o)b_i^{(2)}+C^{\star}_{i_1 i_2} \frac{x^{i_1} x^{i_2}}{\vert x\vert^2} +D^{\star}_{i_1\cdots i_4}\frac{x^{i_1} \cdots x^{i_4}}{\vert x\vert^4}+\frac{3}{2}X^{i_1}(o) f_{i_2 \cdots i_6 }^{(2)}\frac{x^{i_1} \cdots x^{i_6}}{\vert x\vert^6}\Big]\\
&\quad\quad+ \text{a function that admits a $C^1$--extension on  $\overline{B}_{r_o}(o)$},
\end{align*}
where
\begin{align*}
A^{\star}_{i_1 }&=2\Big[\,2b_{i_1}^{(2)}+\sum_{i=1}^3 \big(d_{i_1\! ii}^{(2)}+d_{ i i_1\! i}^{(2)}+ d_{ii i_1}^{(2)}\big)\Big],\\
B^{\star}_{i_1 i_2 i_3}&= 2\Big[-3d_{i_1i_2 i_3}^{(2)} +\sum_{i=1}^3\big(f_{ii i_1 i_2 i_3}^{(2)}+f_{i i_1\! i i_2 i_3}^{(2)} +\cdots +f_{i_1 i_2 i_3 ii}^{(2)} \big)\Big],\\
C^{\star}_{i_1 i_2}&=-\frac{1}{2} \Big[X^{i_1}(o)b_{i_2}^{(2)}+X^i(o)\big(d_{i i_1 i_2}^{(2)}+d_{ i_1\! i i_2}^{(2)} + d_{ i_1 i_2 i }^{(2)} \big) \Big],\\
D^{\star}_{i_1\cdots i_4}&=-\frac{1}{2}\Big[-X^{i_1}(o) d_{i_2 i_3i_4}^{(2)}+X^i(o)\big(f_{i i_1i_2 i_3 i_4}^{(2)}+f_{i_1 i i_2 i_3 i_4}^{(2)}+\cdots + f_{i_1 i_2 i_3 i_4 i}^{(2)}\big)\Big].
\end{align*}
\begin{align*}
\mathcal{L}_X &\Big[\vert x\vert^3\Big(a^{(3)}+c_{i_1 i_2}^{(3)}\frac{x^{i_1} x^{i_2}}{\vert x\vert^2}+e_{i_1\cdots i_4 }^{(3)}\frac{x^{i_1}\cdots x^{i_4}}{\vert x\vert^4}+h_{i_1 \cdots i_6}^{(3)}\frac{x^{i_1}\cdots x^{i_6}}{\vert x\vert^6}
+l_{i_1 \cdots i_8}^{(3)}\frac{x^{i_1}\cdots x^{i_8}}{\vert x\vert^8}\Big) \Big]\\
&\quad =\vert x\vert\Big\{ 2\Big(6a^{(3)}+\sum_{i=1}^3 c_{ii}^{(3)}\Big)+A^{\bullet}_{i_1 i_2} \frac{x^{i_1} x^{i_2}}{\vert x\vert^2}+B^{\bullet}_{i_1 \cdots i_4}\frac{x^{i_1}\cdots x^{i_4}}{\vert x\vert^4}+C^{\bullet}_{i_1 \cdots i_6} \frac{x^{i_1}\cdots x^{i_6}}{\vert x\vert^6}\\
&\,\qquad\qquad-60 l_{i_1 \cdots i_8}^{(3)}\frac{x^{i_1}\cdots x^{i_8}}{\vert x\vert^8}\Big\}\\
&\quad\quad\ \,+ \text{a function that admits a $C^1$--extension on  $\overline{B}_{r_o}(o)$},
\end{align*}
where
\begin{align*}
A^{\bullet}_{i_1 i_2}&=2\Big[3c_{i_1 i_2}^{(3)}+\sum_{i=1}^3 \big(e_{ii i_1i_2}^{(3)}+e_{i i_1i i_2}^{(3)}+\cdots+e_{i_1i_2ii}^{(3)} \big)\Big],\\
B^{\bullet}_{i_1 \cdots i_4}&=2\Big[-4 e_{i_1\cdots i_4 }^{(3)}+\sum_{i=1}^3 \big(h_{ii i_1i_2i_3i_4}^{(3)}+h_{i i_1 i i_2i_3i_4}^{(3)}+\cdots+h_{i_1i_2i_3i_4ii}^{(3)} \big) \Big] ,\\
C^{\bullet}_{i_1 \cdots i_6}&=2\Big[-15 h_{i_1 \cdots i_6}^{(3)}+\sum_{i=1}^3 \big(l_{ii i_1i_2 \cdots i_6}^{(3)}+l_{i i_1 i i_2 \cdots i_6}^{(3)}+\cdots+l_{i_1 \cdots i_6 ii}^{(3)} \big) \Big].
\end{align*}
Therefore, by combining all the previous computations, we obtain
\begin{align*}
&\mathcal{L}_X \Big[\frac{1}{\vert x\vert}+b_{i_1}^{(0)}\frac{x^{i_1}}{\vert x\vert}+\vert x\vert\Big(a^{(1)}+c_{i_1 i_2}^{(1)}\frac{x^{i_1} x^{i_2}}{\vert x\vert^2}+e_{i_1 \cdots i_4 }^{(1)}\frac{x^{i_1}\cdots x^{i_4}}{\vert x\vert^4}\Big)\\
&\quad\quad+\vert x\vert^2\Big(b_{i_1}^{(2)}\,\frac{x^{i_1}}{\vert x\vert}+d_{i_1 i_2 i_3 }^{(2)}\frac{x^{i_1}x^{i_2} x^{i_3} }{\vert x\vert^3}+f_{i_1\cdots i_5 }^{(2)}\frac{x^{i_1}\cdots x^{i_5}}{\vert x\vert^5}\Big)\\
&\quad\quad +\vert x\vert^3\Big(a^{(3)}+c_{i_1 i_2}^{(3)}\frac{x^{i_1} x^{i_2}}{\vert x\vert^2}+e_{i_1\cdots i_4 }^{(3)}\frac{x^{i_1}\cdots x^{i_4}}{\vert x\vert^4}+h_{i_1 \cdots i_6}^{(3)}\frac{x^{i_1}\cdots x^{i_6}}{\vert x\vert^6}
+l_{i_1\cdots i_8}^{(3)}\frac{x^{i_1}\cdots x^{i_8}}{\vert x\vert^8}\Big)\Big]\\
&\quad =\frac{1}{\vert x\vert^2} \Big(\frac{1}{2}X^{i_1}(o)- 2 b_{i_1}^{(0)}\Big)\frac{x^{i_1}}{\vert x\vert}\\
&\quad\quad\ +\frac{1}{\vert x\vert}\Big\{2\Big( a^{(1)}+\sum_{i=1}^{3}c_{i i}^{(1)} \Big) -\frac{1}{2}b_{i}^{(0)}X^i(o)+\big(\widehat{A}_{i_1 i_2}+\frac{1}{2}b_{i_1}^{(0)}X^{i_2}(o)+\widetilde{A}_{i_1 i_2 }\big)\frac{x^{i_1}x^{i_2} }{\vert x\vert^2}\\
&\quad\quad\quad\quad\quad\quad+\big(\widehat{B}_{i_1 \cdots i_4}-18 e_{i_1 \cdots i_4 }^{(1)}\big)\frac{x^{i_1}\cdots x^{i_4}}{\vert x\vert^4}\Big\}\\
&\quad\quad\ +\Big\{\Big[-\frac{1}{2} a^{(1)}X^{i_1}(o)-\frac{1}{2}X^i(o)\big( c_{ii_1}^{(1)}+c_{i_1i}^{(1)}\big)+A^*_{i_1 }+A^{\star}_{i_1 }\Big]\frac{x^{i_1}}{\vert x\vert}\\
&\quad\quad\quad\quad+\big(\widehat{C}_{i_1 i_2 i_3}+B^*_{i_1 i_2 i_3}+\widetilde{B}_{i_1 i_2 i_3 }+B^{\star}_{i_1 i_2 i_3}\big)\frac{x^{i_1} x^{i_2}x^{i_3}}{\vert x\vert^3}\\
&\quad\quad\quad\quad+\big(\widehat{D}_{i_1 \cdots i_5}+C^*_{i_1 \cdots i_5}-3X^{i_1}(o) e_{i_2 \cdots i_5}^{(1)}-24 f_{i_1 \cdots i_5 }^{(2)}\big)\frac{x^{i_1}\cdots x^{i_5}}{\vert x\vert^5}\Big\}\\
&\quad\quad\ +\vert x\vert\Big\{2\Big(6a^{(3)}+\sum_{i=1}^3 c_{ii}^{(3)}\Big) -\frac{1}{2}X^i(o)b_i^{(2)}+\big(D^*_{i_1 i_2}+\widetilde{C}_{i_1 i_2 }+C^{\star}_{i_1 i_2 }+A^{\bullet}_{i_1 i_2 }\big)\frac{x^{i_1} x^{i_2}}{\vert x\vert^2}\\
&\quad\quad\quad\quad\quad+\big(\widehat{E}_{i_1 \cdots i_4}+E^*_{i_1 \cdots i_4}+\widetilde{D}_{i_1 \cdots i_4 }+D^{\star}_{i_1\cdots i_4}+B^{\bullet}_{i_1 \cdots i_4}\big)\frac{x^{i_1} \cdots x^{i_4}}{\vert x\vert^4}\\
&\quad\quad\quad\quad\quad+\big(\widehat{F}_{i_1 \cdots i_6}+F^*_{i_1 \cdots i_6}+\widetilde{E}_{i_1 \cdots i_6 }+\frac{3}{2}X^{i_1}(o) f_{i_2 \cdots i_6 }^{(2)}+C^{\bullet}_{i_1 \cdots i_6}\big)\frac{x^{i_1}\cdots x^{i_6}}{\vert x\vert^6} \\
&\quad\quad\quad\quad\quad+\Big(\frac{15}{2} \partial_{x^{i_1}}\!\partial_{x^{i_2}} g^{i_3 i_4}(o)e_{i_5 \cdots i_8}^{(1)}-60 l_{i_1 \cdots i_8}^{(3)}\Big)\frac{x^{i_1}\cdots x^{i_8}}{\vert x\vert^8}\Big\}\\
&\quad\quad\ +\text{a function that admits a $C^1$--extension $h$ on $\overline{B}_{r_o}(o)$}\,.
\end{align*}
Then, we choose constants $b_{i_1}^{(0)}$, $a^{(1)}$, $c_{i_1 i_2}^{(1)}$, $e_{i_1 i_2 i_3 i_4 }^{(1)}$, $b_{i_1}^{(2)}$, $d_{i_1 i_2 i_3 }^{(2)}$, $f_{i_1 i_2 i_3 i_4 i_5 }^{(2)}$, $a^{(3)}$, $c_{i_1 i_2}^{(3)}$, $e_{i_1 i_2 i_3 i_4 }^{(3)}$, $h_{i_1 i_2 i_3 i_4 i_5 i_6}^{(3)}$, $l_{i_1 i_2 i_3 i_4 i_5 i_6 i_7 i_8}^{(3)}$ in the following way:
\begin{align*}
b_{i_1}^{(0)}&=\frac{1}{4}X^{i_1}(o)\,,\\
e_{i_1 \cdots i_4 }^{(1)}&=\frac{1}{18}\widehat{B}_{i_1 \cdots i_4}\,,\\
c_{i_1 i_2}^{(1)}&=\frac{1}{4}\Big[2\big(e_{ ii i_1 i_2}^{(1)}+e_{ i i_1 i i_2}^{(1)}+ e_{ i i_1 i_2 i }^{(1)}+ e_{ i_1ii i_2}^{(1)}+e_{ i_1i i_2i}^{(1)}+e_{ i_1 i_2ii}^{(1)}\big)+\widehat{A}_{i_1 i_2}+\frac{1}{2}b_{i_1}^{(0)}X^{i_2}(o)\Big],\\
a^{(1)}&=-\sum_{i=1}^{3}c_{i i}^{(1)}+\frac{1}{4}b_{i}^{(0)}X^i(o)\,,\\
\end{align*}
\begin{align*}
f_{i_1 i_2 i_3 i_4 i_5 }^{(2)}&=\frac{1}{24}\Big(\widehat{D}_{i_1 \cdots i_5}+C^*_{i_1 \cdots i_5}-3X^{i_1}(o) e_{i_2 \cdots i_5}^{(1)}\Big),\\
d_{i_1 i_2 i_3 }^{(2)}&=\frac{1}{6}\Big[2\sum_{i=1}^3\big(f_{ii i_1 i_2 i_3}^{(2)}+f_{i i_1\! i i_2 i_3}^{(2)} +\cdots +f_{i_1 i_2 i_3 ii}^{(2)} \big)+\widehat{C}_{i_1 i_2 i_3}+B^*_{i_1 i_2 i_3}+\widetilde{B}_{i_1 i_2 i_3 }\Big],\\
b_{i_1}^{(2)}&=\frac{1}{4}\Big[\frac{1}{2} a^{(1)}X^{i_1}(o)+\frac{1}{2}X^i(o)\big( c_{ii_1}^{(1)}+c_{i_1i}^{(1)}\big)-A^*_{i_1 }-\,2\sum_{i=1}^3 \big(d_{i_1\! ii}^{(2)}+d_{ i i_1\! i}^{(2)}+ d_{ii i_1}^{(2)}\big)\Big],\\
l_{i_1 i_2 i_3 i_4 i_5 i_6 i_7 i_8}^{(3)}&=\frac{1}{8} \partial_{x^{i_1}}\!\partial_{x^{i_2}} g^{i_3 i_4}(o)e_{i_5 \cdots i_8}^{(1)}\,,\\
h_{i_1 i_2 i_3 i_4 i_5 i_6}^{(3)}&=\frac{1}{30}\Big[\widehat{F}_{i_1 \cdots i_6}+F^*_{i_1 \cdots i_6}+\widetilde{E}_{i_1 \cdots i_6 }+\frac{3}{2}X^{i_1}(o) f_{i_2 \cdots i_6 }^{(2)}\\
&\qquad\quad+2\sum_{i=1}^3 \big(l_{ii i_1i_2 \cdots i_6}^{(3)}+l_{i i_1 i i_2 \cdots i_6}^{(3)}+\cdots+l_{i_1 \cdots i_6 ii}^{(3)} \big) \Big],\\
e_{i_1 i_2 i_3 i_4 }^{(3)}&=\frac{1}{8}\Big[\widehat{E}_{i_1 \cdots i_4}+E^*_{i_1 \cdots i_4}+\widetilde{D}_{i_1 \cdots i_4 }+D^{\star}_{i_1\cdots i_4}\\
&\qquad\quad+2\sum_{i=1}^3 \big(h_{ii i_1i_2i_3i_4}^{(3)}+h_{i i_1 i i_2i_3i_4}^{(3)}+\cdots+h_{i_1i_2i_3i_4ii}^{(3)} \big) \Big], \\
c_{i_1 i_2}^{(3)}&=-\frac{1}{6}\Big[D^*_{i_1 i_2}+\widetilde{C}_{i_1 i_2 }+C^{\star}_{i_1 i_2 }+2\sum_{i=1}^3 \big(e_{ii i_1i_2}^{(3)}+e_{i i_1i i_2}^{(3)}+\cdots+e_{i_1i_2ii}^{(3)} \big)\Big],\\
a^{(3)}&=\frac{1}{24}X^i(o)b_i^{(2)}-\frac{1}{6}\sum_{i=1}^3 c_{ii}^{(3)}\,.
\end{align*}

\noindent
\textbf{{Acknowledgements.}} {\em All authors are members of INdAM--GNAMPA. The first author is partially supported by the PRIN Project 2022E9CF89 ``GEPSO -- Geometric Evolution Problems and Shape Optimization''.}

\bibliographystyle{amsplain}
\bibliography{biblio}

\end{document}